\documentclass[a4paper,12pt]{article}
\usepackage[utf8]{inputenc}
\usepackage[english]{babel}
\usepackage{amssymb}
\usepackage{amsthm}
\newtheorem{theorem}{Theorem}
\newtheorem{lemma}[theorem]{Lemma}
\newtheorem{corollary}[theorem]{Corollary}


\usepackage{amsmath}
\usepackage{graphicx}
\graphicspath{{./Figs/}{.}}

\usepackage{tabulary}
\usepackage{multirow}

\renewcommand{\epsilon}{\varepsilon}
\newcommand{\cA}{A}
\newcommand{\cB}{B}
\newcommand{\cC}{C}
\newcommand{\cD}{D}
\newcommand{\pP}{\mathcal{P}}
\newcommand{\bB}{\mathcal{B}}
\newcommand{\yY}{\mathcal{Y}}
\newcommand{\vV}{\mathcal{V}}
\newcommand{\jJ}{\mathcal{J}}
\newcommand{\eps}{\varepsilon}
\newcommand{\beq}{\begin{equation}}
\newcommand{\eeq}{\end{equation}}

\newcommand{\Rset}{{\mathbb R}}
\newcommand{\eq}[1]{(\ref{#1})}
\newcommand{\pain}{Painlev\'{e} }

\setlength{\headheight}{-1.5cm}
\setlength{\oddsidemargin}{-0.5cm}
\setlength{\evensidemargin}{-0.5cm}
\setlength{\textheight}{24cm}
\setlength{\textwidth}{17.5cm}

\begin{document}

\title{Dynamics beyond dynamic jam; unfolding the Painlev\'{e} 
paradox singularity}

\author{Arne Nordmark, P\'eter L. V\'arkonyi, Alan R. Champneys} 
\date{10th May 2017}
\maketitle

\begin{abstract}

This paper analyses in detail the dynamics in a neighbourhood of a
G\'{e}not-Brogliato point, colloquially termed the G-spot, which
physically represents so-called dynamic jam in rigid body mechanics
with unilateral contact and Coulomb friction. Such singular points
arise in planar rigid body problems with slipping point contacts at the
intersection between the conditions for onset of 
lift-off and for the Painlev\'{e} paradox.  The G-spot can be
approached in finite time by an open set of initial conditions in a
general class of problems.  The key question addressed is what happens
next.  In principle trajectories could, at least instantaneously, lift
off, continue in slip, or undergo a so-called impact without
collision. Such impacts are non-local in
momentum space and depend on properties evaluated away from the
G-spot. 

The answer is obtained via an analysis that involves a consistent
contact regularisation with a stiffness proportional to
$1/\varepsilon^2$.  Taking a singular limit as $\varepsilon \to 0$,
one finds an inner and an outer asymptotic zone in the neighbourhood
of the G-spot.  Matched asymptotic analysis then enables not just the
answer to the question of continuation from the G-spot in the limit
$\varepsilon \to 0$ but also reveals the sensitivity of trajectories
to $\varepsilon$. The solution involves large-time asymptotics of
certain generalised hypergeometric functions, which leads to
conditions for the existence of a distinguished smoothest trajectory
that remains uniformly bounded in $t$ and $\varepsilon$.  Such a
solution corresponds to a canard that connects stable slipping motion to
unstable slipping motion, through the G-spot. Perturbations to the
distinguished trajectory are then studied asymptotically. 

Two distinct cases are found according to whether the contact force
becomes infinite or remains finite as the G-spot is approached. In the
former case it is argued that there can be no such canards and so an
impact without collision must occur.  
In the latter case, the canard trajectory acts as a dividing
surface between trajectories that momentarily lift off and those that
do not before taking the impact. The orientation of the initial
condition set leading to each eventuality is shown to change each time
a certain positive parameter $\beta$ passes through an integer.

Finally, the results are illustrated on a particular physical example,
namely the a frictional impact oscillator first studied by Leine {\em
  et al.}
\end{abstract}

\section{Introduction}
\label{sec:1}
This paper considers the open question first posed in the work of
G\'{e}not and Brogliato \cite{Genot1999} in relation to the classical
Painlev\'{e} paradox in contact mechanics. They considered the
classical problem of a falling rod one end of which is in contact with
a rough horizontal surface (see Fig.~\ref{fig:examples}(a)).  They
show that for sufficiently high coefficient of friction there is an
open set of initial conditions that are drawn in finite time into a
singularity, which we have termed the G-spot in homage to G\'{e}not.
Such a point both is characterised by the vanishing of normal free
acceleration and the so-called Painlev\'{e} parameter which measures
the ratio of that acceleration to normal contact force. The physical
phenomenon of approaching the singularity is also known as {\em
  dynamic jam}\/ \cite{Or2012} and has been reported in other physical
systems.  In particular in Sec.~\ref{sec:6} below, the results of this
paper shall be applied to the frictional impact oscillator system
represented in Fig.~\ref{fig:examples}(b), first studied by Leine {\em
  et al.}\/ \cite{Leine2002}.  Yet, we are unaware of any mathematical
analysis of what must happen after such a singularity is reached. Does
the rigid body formulation break down completely, so that there is no
continuation of trajectories beyond this point? If so, then can we say
what might happen physically?

\begin{figure}
\begin{center}
\includegraphics[width=0.8\textwidth]{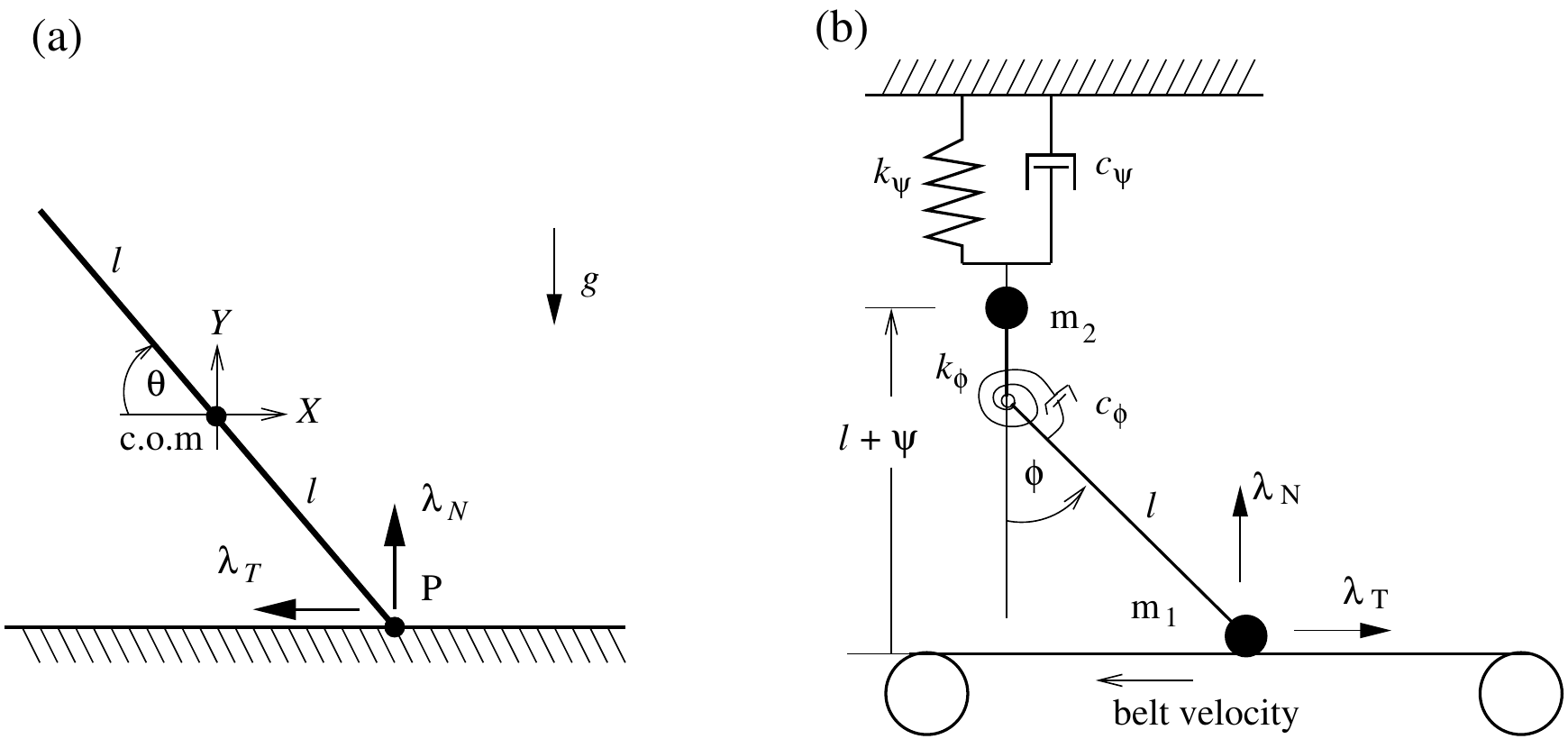}
\end{center}
\caption{(a) The canonical example of the 
Painlev\'{e} paradox, a rod falling under gravity. (b) 
The frictional impact oscillator proposed by \cite{Leine2002}, that we
shall return to in Sec.~\ref{sec:6}.} 
\label{fig:examples}
\end{figure}

\subsection{The Painlev\'{e} paradox and impact without collision}

Our work follows the formalism and notation introduced in the recent
review paper by two of us \cite{PainleveReview}, to which we refer the
reader for the necessary motivation, historical context and general
formulation. In particular, there it was argued that approach to a
G-spot singularity is a generic mechanism in planar rigid body
mechanics subject to unilateral point contact with dry frictional
surfaces and is not merely restricted to a few atypical example
mechanisms.

Specifically, we consider a multi-degree-of-freedom
Lagrangian planar rigid body system with an isolated point of contact 
with a rigid surface, which is subject to Coulomb friction. 
Using the notation introduced in \cite{PainleveReview}, we find that
projecting the Lagrangian equations of motion onto 
tangential and normal directions gives scalar equations  
\begin{align}
\dot{u}& =a\left( q,\dot{q},t\right) +\lambda_{T}\cA\left( q,t\right)
+\lambda_{N}\cB\left( q,t\right) ,  \label{eq:udynamics} \\
\dot{v}& =b\left( q,\dot{q},t\right) +\lambda_{T}\cB\left( q,t\right)
+\lambda_{N}\cC\left( q,t\right).  \label{eq:vdynamics}
\end{align}
Here $u$ and $v$ are tangential and normal velocities of the contact point; $q$ is a vector of generalised coordinates and $t$ is time;  $a$, $b$, $\cA$, $\cB$, $\cC$, $\cD$ are scalars
subject to the constraints that $\cA>0$, $C>0$ and $\cA\cC-\cB^2 >0$ 
which arise
from the assumption of positive definiteness of the mass matrix.    
The scalars $\lambda_N\geq 0$ and $\lambda_T$ represent 
normal and tangential contact forces respectively and are Lagrange multipliers that must
be solved for under different assumptions on the mode of motion (free, stick, 
or positive or negative slip). During contact, we suppose that 
Coulomb friction applies:
\beq
|\lambda_T| \leq \mu |\lambda_N|, \qquad 
 \lambda_T =  - \mu \: \mbox{sign}(u) \lambda_N  \quad \mbox{for} \: u \neq 0,
\label{eq:Coulomb}
\eeq
where $\mu$ is the coefficient of friction.  

Positive slip occurs during contact with 
$\lambda_N>0$ and $u > 0$, so that $\lambda_T=-\mu\lambda_N$. To sustain
contact we must have $\dot{v}=0$, which gives 
\begin{equation}
  \lambda_N=-\frac{b}{p}, \qquad  \mbox{where} \quad  p := \cC-\mu \cB.
\label{eq:pain_par+}
\end{equation}
Here we dropped the superscript $+$ on the Painlev\'{e} parameter $p$, 
adopted in \cite{PainleveReview},
because this paper shall, without loss of generality, 
only concern positive slip. 
If $p$ is negative, we say the Painlev\'{e} paradox applies for
appropriate initial conditions, in which case 
\eqref{eq:pain_par+} shows that in
order for $\lambda_N$ to be positive we much have free normal acceleration
away from the contact, that is $b>0$. This leads to 
multiplicity of solution, because for $b>0$ lift-off could also occur.
However, it can be showed that slipping in regions with $b>0$ is violently
unstable \cite{paper2,HoganKristiansen}.
Nevertheless, there is another possibility, and indeed this is
the only consistent possibility if $p<0$ and $b<0$, namely that a so-called 
{\em impact without collision} (IWC) occurs, 
see \cite{paper1,PainleveReview}.

Impact  in the present context defines a process in 
which rapid changes in normal and tangential velocity occur over
an infinitesimal timescale \cite{Stronge2000}.
The impact process can then be modelled as a composite mapping
from an incoming velocity to an outgoing one:
\begin{equation}
(u^-,v^-) \mapsto_{\mbox{compression phase}} (u^{(0)},0) 
\mapsto_{\mbox{restitution phase}} (u^+,v^+), 
\label{eq:impactmap}
\end{equation}
where $v^-\leq 0$ and $v^+ \geq 0$. In  
each of the compression and restitution phases, it is assumed that the system behaves as a rigid body system (despite the presence of large forces), and one needs to account
for possible transitions from slip to stick.
Complete results are summarised in  \cite{paper1}, for an energetic
coefficient of restitution  where
the work done by the normal force during restitution is
$-r^2$ that lost during compression.
Similar calculations can be carried out explicitly
for a Poisson impact law where the normal impulse in restitution is 
$-r$ times that in compression (see e.g.~\cite{Chatterjee1998}). 
The distinction is not important here.  During the impact process, 
to leading order,
the motion can be assumed to occur along straight lines in
the $(u,v)$-plane, with corners occurring at transitions between slip
and stick during the impact process; see Fig.~\ref{fig:impactmap} for
two examples. 

\begin{figure}
\begin{center}
\includegraphics[width=0.6\textwidth]{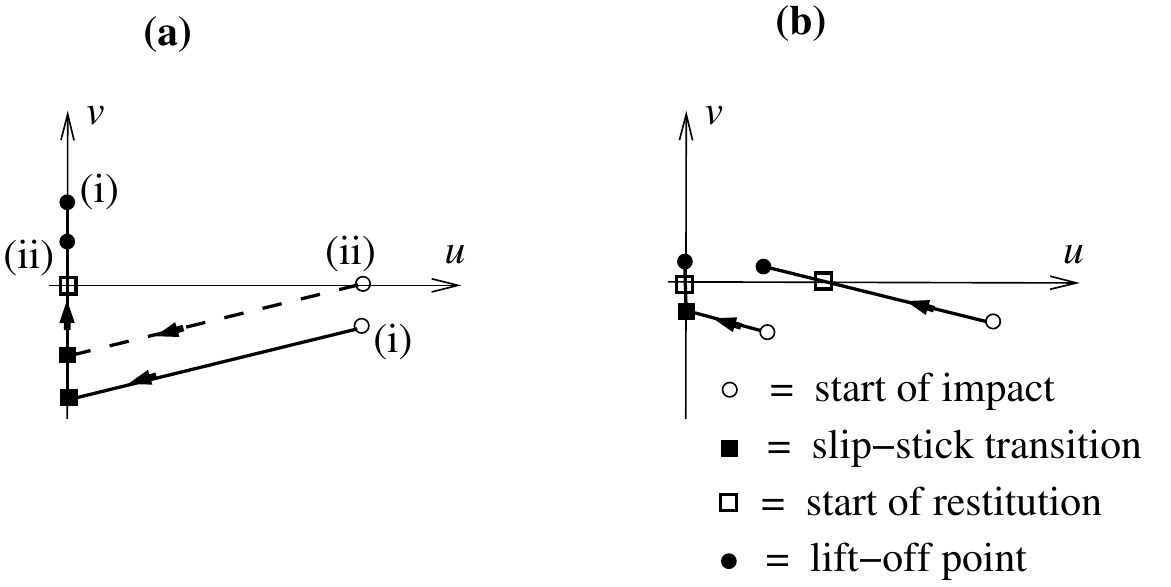}
\end{center}
\caption{(a) Representing trajectories during an impact for an initial
condition $u>0$ in the case that $p<0$ is small and constant.  The
solid line indicates a trajectory (labelled (i)) with initial $v<0$,
whereas the dashed line indicates a trajectory (ii) undergoing 
impact without collision, in which the initial condition has
$v=0$.  (b) Similar Impact events for $p$ small and positive. Here
one initial condition lifts off in slip, the other (with smaller
initial $u$) lifts off in stick.  See \cite{paper1} for details.}
\label{fig:impactmap}
\end{figure}

Looking at Fig.~\ref{fig:impactmap}(a), note that in the case that
$p<0$ then we can have an impact even when $v^-=0$. This would be an
example of an IWC, also known as a
tangential shock.

\subsection{The G-spot}

A question at the heart of the classical \pain paradox is what happens
next when a configuration with $p=0$ is reached during regular
slipping motion. As first shown by G\'{e}not and Brogliato \cite{Genot1999}
for the classical Painlev\'{e} paradox (see \cite{PainleveReview} for a 
generalisation), in fact the only
possible way to approach such a transition is via the codimension-two
point in phase space where $p=b=0$, i.e.~the G-spot.  They analysed nearby
trajectories by introducing a singular rescaling of time
\begin{equation}
dt=p \: d\hat{s} \label{eq:time_rescale},
\end{equation}
so that the $G$-spot becomes an equilibrium point in suitable variables that
evolve on the timescale $\hat{s}$.  In so doing, 
(see Sec.~\ref{sec:2} below for details) we obtain a system of the form
\begin{eqnarray}
\frac{d}{d\hat{s}} p &=& \alpha_{1} p  , \label{eq:Gspot1} \\
\frac{d}{d\hat{s}} b &=&\alpha_{2} p +\alpha_{3}b, \label{eq:Gspot2} 
\end{eqnarray}
where the $\alpha_i$ are constants to leading order, 
and, depending on the particular system in question can 
take on any combination of signs. 

The dynamics of \eqref{eq:Gspot1},\eqref{eq:Gspot2} can be analysed
using phase plane analysis.  Given an initial condition in slip $(b<0,
p>0)$ then there are only three possible outcomes. Either (i) the
trajectory remains in slip leaving the vicinity of the G-spot without
$b$ or $p$ changing sign; (ii) it lifts off by passing through $b=0$
$p>0$, or (iii) it is attracted to the G-spot $p=b=0$ as $\hat{s} \to
\infty$. Note though that $\hat{s} \to \infty$ implies that the G-spot is
approached in finite time $t$.

It is straightforward to show that this 
third possibility can only 
occur under specific sign combinations of 
$\alpha_1$, $\alpha_2$ and $\alpha_3$ see \cite[Fig.~13]{PainleveReview}. The
three relevant cases are summarised in Fig.~\ref{fig:Gspot3}:
\begin{description}
\item[Case I.] If $\alpha_2<0$ and $\alpha_1<\alpha_3<0$ then all initial conditions 
in the bottom right $(p,b)$-quadrant approach the $G$-spot
such that ratio $p/b \to 0$ and the  normal force $\lambda_N \to \infty$. 
\item[Case II.] If $\alpha_2>0$ and $\alpha_1<\alpha_3<0$ then initial conditions with 
$\alpha_2 p < (\alpha_1-\alpha_3 ) b $ similarly approach the $G$-spot with
 $p/b \to 0$ and $\lambda_N \to \infty$. 
\item[Case III.] If $\alpha_2<0$ and $\alpha_3<\alpha_1<0$ then the G-spot is
approached tangent to the nontrivial eigenvector 
$\alpha_2 p = (\alpha_1-\alpha_3 ) b $ 
and $\lambda_N$ approaches the finite limit $\alpha_2/(\alpha_3-\alpha_1)$. 
\end{description}

\begin{figure}
\begin{center}
\includegraphics[width=0.8\textwidth]{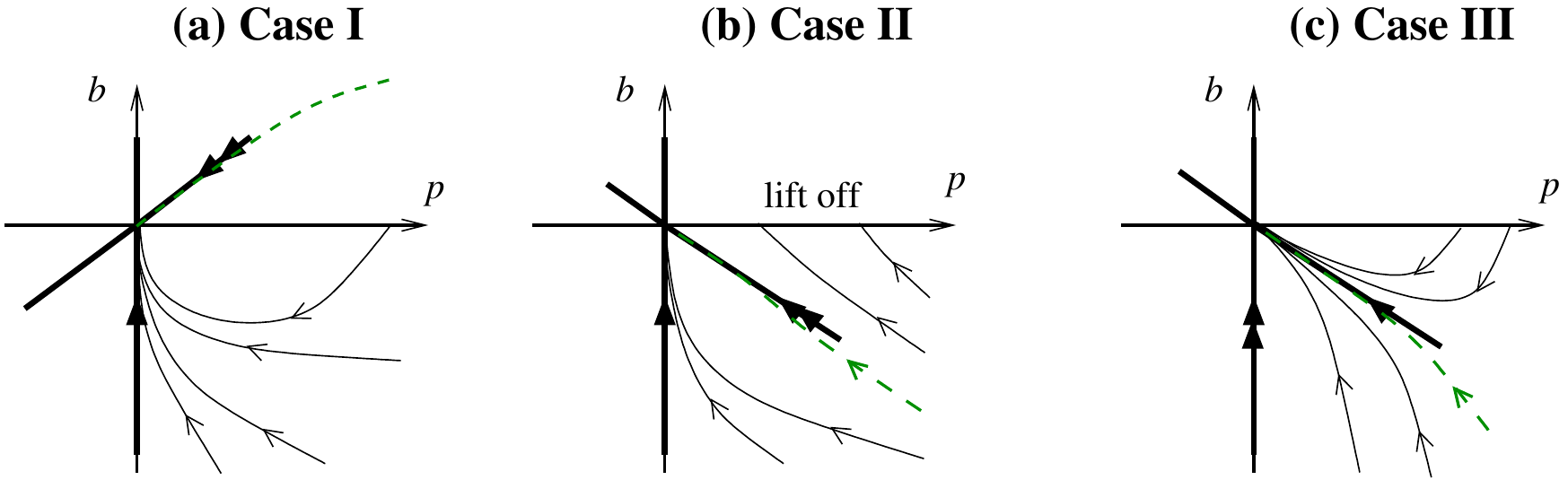}
\end{center}
\caption{Qualitative illustration of the dynamics of the singular
  system \eqref{eq:Gspot1}--\eqref{eq:Gspot2} in the $(p,b)$ phase
  plane in each of cases I, II and III with each of the $\alpha$'s
  assumed to be constant.  Here bold lines depict eigenvectors, with
  double arrows showing the strong stable eigendirection and single
  arrows the weak stable direction. Thin lines indicate individual
  trajectories in positive slip which can either be seen to lift-off
  (by reaching the positive $p$-axis) or to undergo dynamic jam (by
  reaching the G-spot $b=p=0$. The dashed line (green online) represents
  the distingished maximally smooth trajectory (see
  Sec.~\ref{sec:distinguished} below).}
\label{fig:Gspot3} 
\end{figure}

Note that the calculation in \cite{Genot1999} reveals that the
classical falling rod problem is in Case II. In what follows we shall
introduce examples of all three cases.

\subsection{Continuation beyond the G-spot and contact regularisation}

The rest of this paper concerns what happens for trajectories
that pass through G-spot. As we shall see, even that question cannot be answered
in complete generality. Our approach though is to 
to introduce a different scaling than \eqref{eq:time_rescale}
that is singular at $p=0$. In principle, a trajectory passing through
the G-spot could continue in (highly unstable) slip with $b>0$, it
could lift off, or it could take an IWC (see
Fig.~\ref{fig:qual}(a)).  However, there is a subtle problem with this
latter possibility, as illustrated in Fig.~\ref{fig:qual}(b). Detailed
calculations of the impact map (see \cite{paper1}) reveal that slope
of the impacting trajectory in the $(u,v)$-plane is proportional to
$p$.  But, precisely at the G-spot we know that $p=0$, so one has to
consider a different scaling than that used in \cite{paper1} to 
calculate the impact map. In principle, 
as illustrated in Fig.~\ref{fig:qual}(b), the
curvature of the impacting trajectory in the $(u,v)$-plane could be
such that the impact could terminate at any $u$-value between $u^-$
and 0. In other words, we cannot tell {\em a priori}\/ whether the
impact continues all the way to stick $u=0$, or whether it terminates
while the contact is still is slip ($u>0$).

The approach we shall take to addressing these questions is to study the
problem via contact regularisation; that is, replacing the rigid
constraint with a compliant one whose stiffness scales like some small
parameter $\eps^2$, see \cite{PainleveReview} and references therein.
In particular, in \cite{paper2} the idea was introduced of finding
resolutions to certain indeterminate cases of the \pain paradox via
such an approach and taking the limit $\eps \to 0$.  If there is a
unique solution that can be followed uniformly into this limit, then
it was said those dynamics are said to be \emph{uniformly resolvable}.
This enables, for example, questions to be answered of whether slip
with $p<0$ could be stably observed in practice 
(it can't, it is wildly unstable).
This is precisely the approach we shall take here.  The key will be to
find a consistent asymptotic scaling that enables us to identify a
distinguished trajectory that is smooth both in $t$ and $\eps$ in a
neighbourhood of the G-spot. Such a maximally smooth trajectory is
illustrated as a dashed red line in Figs.~\ref{fig:Gspot3} and
\ref{fig:qual}. We then consider perturbations to this trajectory to
decide whether nearby trajectories take an impact 
or lift off, and over what timescale.

\subsection{Summary of main results}

The main result of the paper is to perform a matched asymptotic expansion that
enables a description of what happens beyond the G-spot for a general
planar rigid body system with an isolated frictional contact point. This is 
achieved by finding a dinstinguished inner asmyptotic scale for $p$ (or equivalently time) of 
size $\mathcal{O} \eps^{2/3}$ where $\eps^{-2}$ is the regularised
contact stiffness. A matched asymptotic analysis then  
leads to regular conclusions
as $\eps \to 0$.  Fig.~\ref{fig:qual} depicts a qualitative 
representation of the results for small $\eps>0$.  
Specifically, we find: 
\begin{description}
\item[Cases I and II.] As depicted in Fig.~\ref{fig:qual}(e),
all trajectories that pass through a small neighbourhood
of the G-spot take an IWC. The process of what happens after this impact
cannot be analysed by studying the dynamics of a 
neighbourhood of the G-spot alone, because
the impact necessarily involves $O(1)$ changes in 
$u$ and $v$ (as shown in Fig.~\ref{fig:qual}(b)) and could involve lift off
with zero or finite tangential velocity $u$, depending on the precise example
system in question. 

\item[Case III.]  In this case, there is a distinguished trajectory, indicated by a dashed (red) line in Fig.~\ref{fig:qual}(c),(d) that forms
a {\em canard} solution for small $\eps>0$ which divides two different 
behaviours. On one side of this distinguished trajectory, solutions lift off,
whereas on the other side, they take an IWC. Each time the ratio 
$\beta=\alpha_1/\alpha_3$ evaluated at the G-spot 
passes through a positive integer, then there is side-switching between which
sign of perturbation to the canard undergoes lift-off and which undergoes
IWC. 
\end{description}
More precise statements of these results are given in Secs.~\ref{sec:4} 
and \ref{sec:5} below. 

\begin{figure}
\begin{center}
\includegraphics[width=\textwidth]{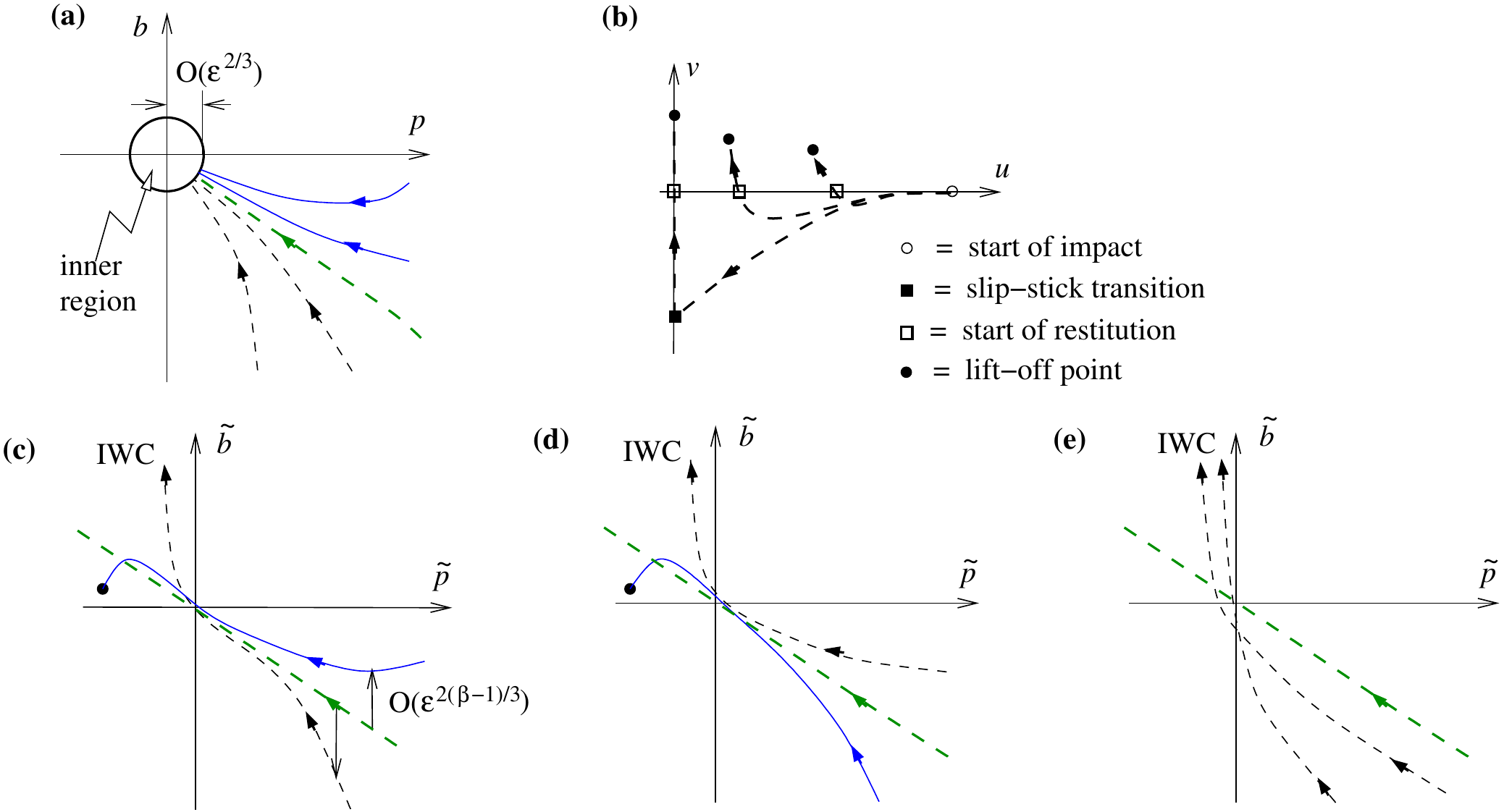}
\caption{Qualitative representation of the main results. (a) Indicating 
the scaling of the inner region. Specifically illustrated here is an
example of Case III from Fig.~\ref{fig:Gspot3}. In this and subsequent panels,
a thick dashed (green online) line is used to represent the 
distinguished trajectory, thin solid lines (blue online) 
represent trajectories that 
lift off after passing through a neighbourhood of the G-spot and 
thin dashed lines  represent trajectories that take an impact without collision. 
(b) Indicating the process of impact without collision in the (outer) 
$(u,v)$ co-ordinates in a neighbourhood of the G-spot. 
Depending on the global dynamics $u$-direction the trajectory can either
continue all the way down to stick, or can curve upwards and lift-off while
still slipping. (c-e) Representation of dynamics of perturbations to the 
distinguished trajectory in the inner region: (c) Case III when $[\beta]$ is an odd integer; (d)  Case III when $[\beta]$ is
even integer; and (e) Case II. (Here $[\cdot]$ represents the integer part of 
positive number) }
\label{fig:qual}
\end{center}
\end{figure}

\subsection{Outline}

The rest of this paper is outlined as follows.  Section \ref{sec:2}
below introduces a general formulation that includes constraint
regularisation via an additional degree of freedom, that can be
thought of as an additional spring. We also introduce a simple
illustrative example system in which many of the calculations can be
done explicitly. Sections \ref{sec:3} and \ref{sec:4} then contain
numerical and analytical calculations respectively on this example
in order to motivate and illustrate the general principles. 
Section \ref{sec:5} then
contains a detailed asymptotic derivation of the main results of the
paper for arbitrary planar $m$-dimensional rigid-body system
with a single frictional point contact.  Section \ref{sec:6} then
contains application of the results to the frictional impact
oscillator. Finally, Sec.~\ref{sec:7} draws conclusions and suggests
avenues for other work.

\section{Preliminaries}
\label{sec:2}

Consider a planar Lagrangian system with a unilateral constraint, 
which can be expressed in the form  $y>0$, where 
$y$ is a smooth function of the co-ordinate
variables $q_i$.
To simplify notation, in what follows we group together all Lagrangian
co-ordinates and velocities variables $q_i$, $\dot{q}_i$ and (in the
case of explicitly non-autonomous systems) $t$ into a single
$m$-dimensional state vector $\xi$ and consider systems that
are written in the form
\begin{equation}
\dot{\xi} = F(\xi) + G_T(\xi) \lambda_T+G_N(\xi) \lambda_N,\label{eq:genTN},
\end{equation}
where the scalar Lagrange multipliers $\lambda_N$ and $\lambda_T$ 
represent the normal and tangential forces at the contact point. 
We also suppose that tangential and normal contact coordinates 
$x(\xi)$ and $y(\xi)$ are smooth functions of $\xi$. 
Now we can express the quantities entering in the normal and tangential
equations \eqref{eq:udynamics} and
\eqref{eq:vdynamics} as
\begin{equation*}
  u=\mathsterling_F x, \quad v=\mathsterling_F y, \quad
  a=\mathsterling_F u, \quad b=\mathsterling_F v, \quad A=\mathsterling_{G_T} u, \quad B=\mathsterling_{G_N} u=\mathsterling_{G_T} v, \quad C=\mathsterling_{G_N} v 
\end{equation*}
where $\mathsterling$ denotes the {\em Lie derivative}. In this context,
the Lie derivative $\mathsterling_G z$ of a scalar function $z(\xi)$ with respect to a vector field $G(\xi)$ is just
the total time derivative of $z$ under the assumption that $\xi$ satisfies
the dynamical system $\dot{\xi}=G(\xi)$. 
Additionally, the Lagrangian character of the system requires
\begin{equation}
  \mathsterling_{G_T} x=\mathsterling_{G_N} x=\mathsterling_{G_T} y=\mathsterling_{G_N} y=0
  \label{eq:lagrangian1}
\end{equation}
since $x$ and $y$ do not depend on $\dot{q}$.

If we restrict attention to positive slip, 
where $\lambda_T=-\mu\lambda_N$, we obtain
\begin{equation}
\dot{\xi} = F(\xi) + G(\xi) \lambda_N,
\label{eq:genx}
\end{equation}
where $G=G_N-\mu G_T$. We can now further define
\begin{equation*}
  p=C-\mu B=\mathsterling_G v, \quad \alpha_1=\mathsterling_F p, \quad
  \alpha_2=\mathsterling_F b, \quad \alpha_3=-\mathsterling_G b
\end{equation*}
and again the Lagrangian character requires
\begin{equation}
  \mathsterling_G p=0.
   \label{eq:lagrangian2}
\end{equation}
Under these definitions, in positive slip 
the scalar quantities $p(\xi)$, 
$b(\xi)$, $y(\xi)$, $v(\xi)$ satisfy
\begin{align}
  \dot{p}&=\alpha_1(\xi), 
\label{eq:gensys2} 
\\
  \dot{b}&=\alpha_2(\xi)-\alpha_3(\xi)\lambda_N,
\label{eq:gensys3} 
\\
  \dot{y}&=v,
\label{eq:gensys4} 
\\
  \dot{v}&=b+p\lambda_N.
\label{eq:gensys5} 
\end{align}

In what follows will denote by $\xi^*$ a point that satisfies the conditions 
to be at a G-spot in positive slip
\begin{equation*}
  p(\xi^*)=b(\xi^*)=y(\xi^*)=v(\xi^*)=0,
\end{equation*}
and use an asterisk to denote functions evaluated at such a point.

\subsection{Regularlised contact motion}

In the rigid limit, the constraint surface is given by 
$y(\xi)=0$. Following the approach outlined in the introduction, we 
shall analyse the system by introducing a regularisation in the form of 
a smoothing of the contact motion. Here we introduce  
compliance via an additional degree of freedom with
co-ordinate $z$ that represents the vertical deformation of the surface.
Then the normal force $\lambda_N$ becomes a function of $z$ and the vertical
position $y$ of the contact point of the rigid body( Fig.~\ref{fig:compliance})

\begin{figure}
\begin{center}
(a) \hspace{6.0cm} (b) \\
\includegraphics[width=40mm]{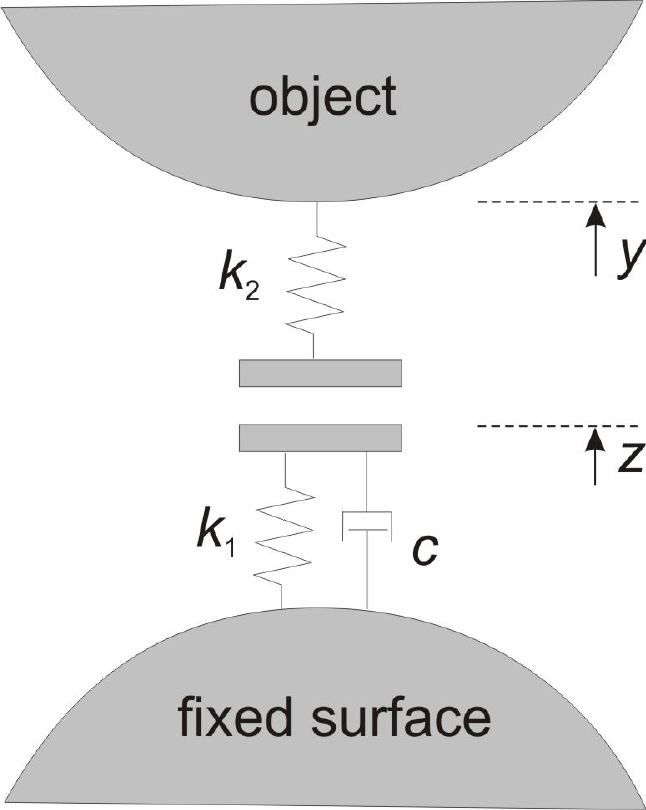}
\hspace{1.0cm}
\includegraphics[width=80mm]{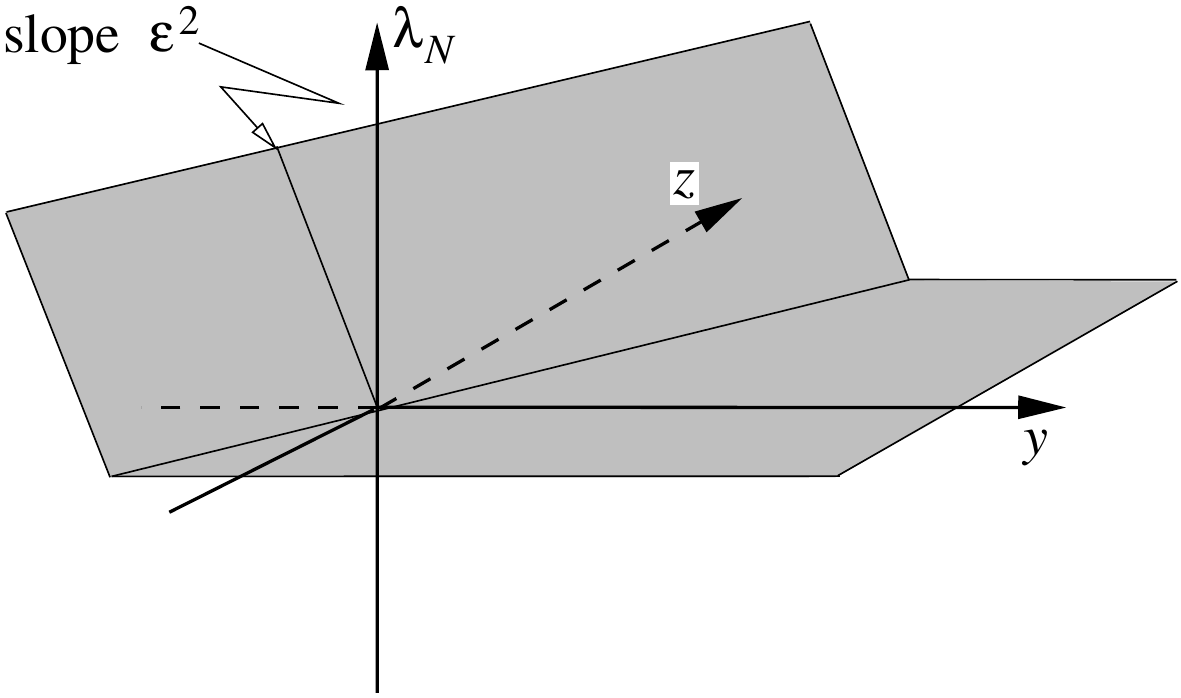}
\caption{(a) Schematic diagram of the compliant surface model. (b) 
The compliant normal force versus displacement relationship, which 
reduces to the usual rigid, unilateral contact law in the limit $\eps\to 0$.}
\label{fig:compliance}
\end{center}
\end{figure}

\begin{equation}
c \dot{z} = -k_1 z - \lambda_N
\label{eq:compliant1}
\end{equation}
\begin{equation}
\lambda_N = 
\left \{ \begin{array}{ll} 
k_2 (z-y)  & \mbox{if } y<z , \\
0 &  \mbox{otherwise} , 
\end{array} 
\right . 
\label{eq:compliant2}
\end{equation}
where $k_{1,2}$ are $\mathcal{O}(\eps^{-2})$ and $c=\mathcal{O}(\eps^{-1})$
for some small parameter $0<\eps\ll 1$.
For convenience in what follows we choose 
\beq
k_1=k_2=\frac{1}{\eps^2}, \qquad c= \frac{1}{\eps}.
\label{eq:compliant3}
\eeq

Note that this form of compliance, via the additional scalar 
deformation variable $z$, has an advantage over other forms of contact
regularisation reviewed in \cite{PainleveReview} because both
lift-off and touch-down are given by the same condition
$y=z$. Moreover, as we take the limit $\eps \to 0$, note that $z$
quickly relaxes to be equal to $y/2$ whenever $y<0$ and equal to zero
for $y>0$. Moreover, the level of deformation for a given force $\lambda_N$
tends to zero as $\eps \to 0$. 

It is also worth noting that this impact model is consistent (in the
limit of $\epsilon\rightarrow 0$) with rigid impact models based on
coefficients of restitution. For example, our model predicts an
ideally elastic impact (energetic coefficient of restitution $=1$ in
the sense of \cite{Stronge2000}) in any of the following limits:
$c\rightarrow 0$, $c\rightarrow\infty$, $k_2\rightarrow 0$, or
$k_1\rightarrow\infty$. It predicts ideally inelastic impact
(coefficient of restitution $ =0$) if $k_1\rightarrow 0$ and
simultaneously $k_2\rightarrow\infty$. Nevertheless fixed values of
$k_1$, $k_2$ and $c$ do not correspond to fixed values of the
coefficient of restitution in general.

Using  
\eqref{eq:compliant1}--\eqref{eq:compliant3} the compliant
version of the system is \eqref{eq:genTN} together with
\begin{align}
 \eps\dot{z}&=-z-\eps^2\lambda_N \label{eq:compl1}
 \\ 
   \lambda_N &= 
\left \{ \begin{array}{ll} 
\eps^{-2}( z-y )  & \mbox{if } y(\xi)<z \\
0 &  \mbox{otherwise} 
\end{array}\right .  \label{eq:compl2}.
\end{align}
and for positive slip \eqref{eq:genTN} becomes \eqref{eq:genx}.

\subsection{A simple motivating example}

Now, if the quantities $\alpha_1$, $\alpha_2$ and $\alpha_3$ are assumed to
be constant
in \eqref{eq:gensys2}--\eqref{eq:gensys5} then we get
\begin{align}
  \dot{p}&=\alpha_1^*,\label{eq:modelsys1}\\
  \dot{b}&=\alpha_2^*-\alpha_3^*\lambda_N,\label{eq:modelsys2}\\
  \dot{y}&=v,\label{eq:modelsys3}\\
  \dot{v}&=b+p\lambda_N,\label{eq:modelsys4}\\
 \eps\dot{z}&=-z-\eps^2\lambda_N, \label{eq:modelsys5}
\end{align}
where $\lambda_N$ is given by \eqref{eq:compl2}. 
Assuming $y<z$ in
\eqref{eq:compl2}, this set of equations admits a trivial solution
\begin{align}
  \bar{p}&=\alpha_1^*t,\label{eq:modelSysDistP}\\
  \bar{b}&=\frac{\alpha_2^*}{\alpha_1^*-\alpha_3^*}\alpha_1^*t,\label{eq:modelSysDistB}\\
  \bar{z}&=\frac{\bar{y}}{2}=-\eps^2\frac{\alpha_2^*}{\alpha_1^*-\alpha_3^*},\label{eq:modelSysDistZY}\\
  \bar{v}&=0\label{eq:modelSysDistV}
\end{align}
which, as we will see later, is a canard solution that passes through the 
$G$-spot in the limit $\eps \to 0$.

\subsection{A less degenerate example}

The question of whether IWC initiated at the G-spot terminates in stick, or 
in slip (as illustrated in Fig.~\ref{fig:qual}(b)) 
cannot be determined in the above model 
because there is no variation of the tangential velocity. Thus 
the simple system \eqref{eq:modelsys1}--\eqref{eq:modelsys5}
can never undergo a transition to stick. 
To allow
investigation of such a question, we need to add tangential
degrees of freedom to the model via introduction of variables $x$ and 
\begin{align}
\dot x = u \label{eq:xdot}
\end{align} 
representing
the tangential position and velocity of the contact point. We also have 
to include the non-smooth Coulomb friction law. The
contact force in stick or slip can be expressed as the sum 
of a forward slipping and a backward slipping
contact forces. Let the magnitude of the normal components 
of these two forces be given by $\lambda^+$ and $\lambda^-$, respectively. 
Specifically we can write
\beq
\lambda^+ =c\lambda_N, \qquad \lambda^- =(1-c)\lambda_N \label{eq:lambda+}
\eeq
where $c = 1$ for positive slip, $c = 0$ for negative slip, 
and for stick $c$ takes an intermediate value 
that shall be determined shortly.

The contact forces corresponding to positive and negative 
slip have different effects on the dynamics, therefore the 
terms $\alpha_3^*\lambda_N$ and $p\lambda_N$ of \eqref{eq:modelsys2} 
and \eqref{eq:modelsys4} must be replaced by general functions of 
$\lambda^+$ and $\lambda^-$. For simplicity in what follows we let the 
contact-dependent part of $\dot{b}$ be $\alpha_3^* \lambda^+$ and 
contact-dependent part of $\dot{v}$ be 
$p\lambda^++p^-\lambda^-$, where $p^-$ is a scalar. 

A similar distinction is made in the dynamics of the
new variable $u$, which is modelled by the equation
\begin{equation}
\dot u = a+k^+\lambda^+ + k^-\lambda^-\label{eq:udot}
\end{equation}
where $a$, $k^+$, $k^-$ are also scalars. 
The condition $\dot u=0$ for stick now allows us to determine the missing
value 
\begin{equation}
c=(k^-+\lambda_N^{-1}a)/(k^--k^+).
\label{eq:cdot}
\end{equation}

In addition to the necessary extensions outlined above, we 
also introduce a parametric state dependence of $\alpha_1$ in the form of 
$\alpha_1(\xi)=\alpha_1^*+\chi b$, for some scalar $\chi$
which allows two-way coupling between normal and tangential dynamics. 
The resulting 
extended example system can now be written in the form 
\eqref{eq:compl2}, \eqref{eq:xdot}, \eqref{eq:udot} and
\begin{align}
 \dot{p}&=\alpha_1^*+\chi b, \label{eq:newex1}\\
  \dot{b}&=\alpha_2^*-c \alpha_3^*\lambda_N
\label{eq:newex2}\\
  \dot{y}&=v, \label{eq:newex3}\\
  \dot{v}&=b+pc\lambda_N+p^-(1-c)\lambda_N, \label{eq:newex4}\\
 \eps\dot{z}&=-z-\eps^2\lambda_N, \label{eq:newex5}
\end{align}
in which $\chi$, $\alpha^*_{1-3}$, $k^{\pm}$ and $a$ are fixed constants
and $c$ is given by \eqref{eq:cdot} for stick and is equal to 0 or 1 for
negative and postive slip respectively.

If $p\approx 0$ then the negative \pain parameter must be
positive \cite{PainleveReview}, hence we choose $p^- = 1$. Finally, 
positive (respectively, negative) slipping contact forces
typically accelerate the contact point in the negative (positive) tangential direction, which motivates the choice  
\beq
p^- =1, \quad k^+ = -1, \quad k^- = 2 \quad \mbox{and} \quad 
a=0. 
\label{eq:newex6}
\eeq
where the choice $a=0$ is simply made for convenience.

Note that this extended example system 
\eqref{eq:compl2}, \eqref{eq:xdot}-\eqref{eq:udot} and 
\eqref{eq:newex1}--\eqref{eq:newex6} was not explicitly derived from a Lagrangian system, nevertheless it can be written in the form \eqref{eq:genTN} by taking
\begin{equation*}
  F(\xi)=\begin{pmatrix}\xi_2\\0\\ \xi_4\\ \xi_6\\ \alpha_1^*+\chi \xi_6\\ \alpha_2^*
  \end{pmatrix}, \quad 
  G_T(\xi)=\begin{pmatrix}0\\\frac{3}{2(1-\xi_5)}\\ 0\\ \frac{1}{2}\\ 0\\ \frac{\alpha_3^*}{2(1-\xi_5)}
  \end{pmatrix}, \quad 
  G_N(\xi)=\begin{pmatrix}0\\ \frac{1}{2}\\ 0\\ \frac{1+\xi_5}{2}\\ 0\\ -\frac{\alpha_3^*}{2}
  \end{pmatrix},\quad \mu=1-\xi_5, \quad x=\xi_1, \quad y=\xi_3.
\end{equation*}
and it satisfies the relations \eqref{eq:lagrangian1}, 
\eqref{eq:lagrangian2}, which reflect the Lagrangian character 
of general systems. It then follows that
\begin{equation*}
 u=\xi_2, \quad  v=\xi_4, \quad p=\xi_5, \quad b=\xi_6, \quad \alpha_1= \alpha_1^*+\chi \xi_6, \quad \alpha_2= \alpha_2^*, \quad \alpha_3= \alpha_3^*.
\end{equation*}
Note that the parameter $\chi$ can be effectively thought of as a homotopy parameter that allows us to pass from a simple case ($\chi=0$) in which there
is a trivial solution (similar to 
\eqref{eq:modelSysDistP}--\eqref{eq:modelSysDistV}) that passes through the 
$G$-spot to a more complicated case ($\chi=1$) in which there is no such trivial
solution. 

\section{Numerical results for motivating example}
\label{sec:3}

We consider first the simplified version of the motivating example
\eqref{eq:modelsys1}--\eqref{eq:modelsys5}.
We want to understand what happens to initial conditions 
that are small perturbations from
the trivial solution \eqref{eq:modelSysDistP}--\eqref{eq:modelSysDistV}. 

\subsection{A dichotomy between lift-off and impact}

\begin{figure}
\begin{center}
\includegraphics[width=\textwidth]{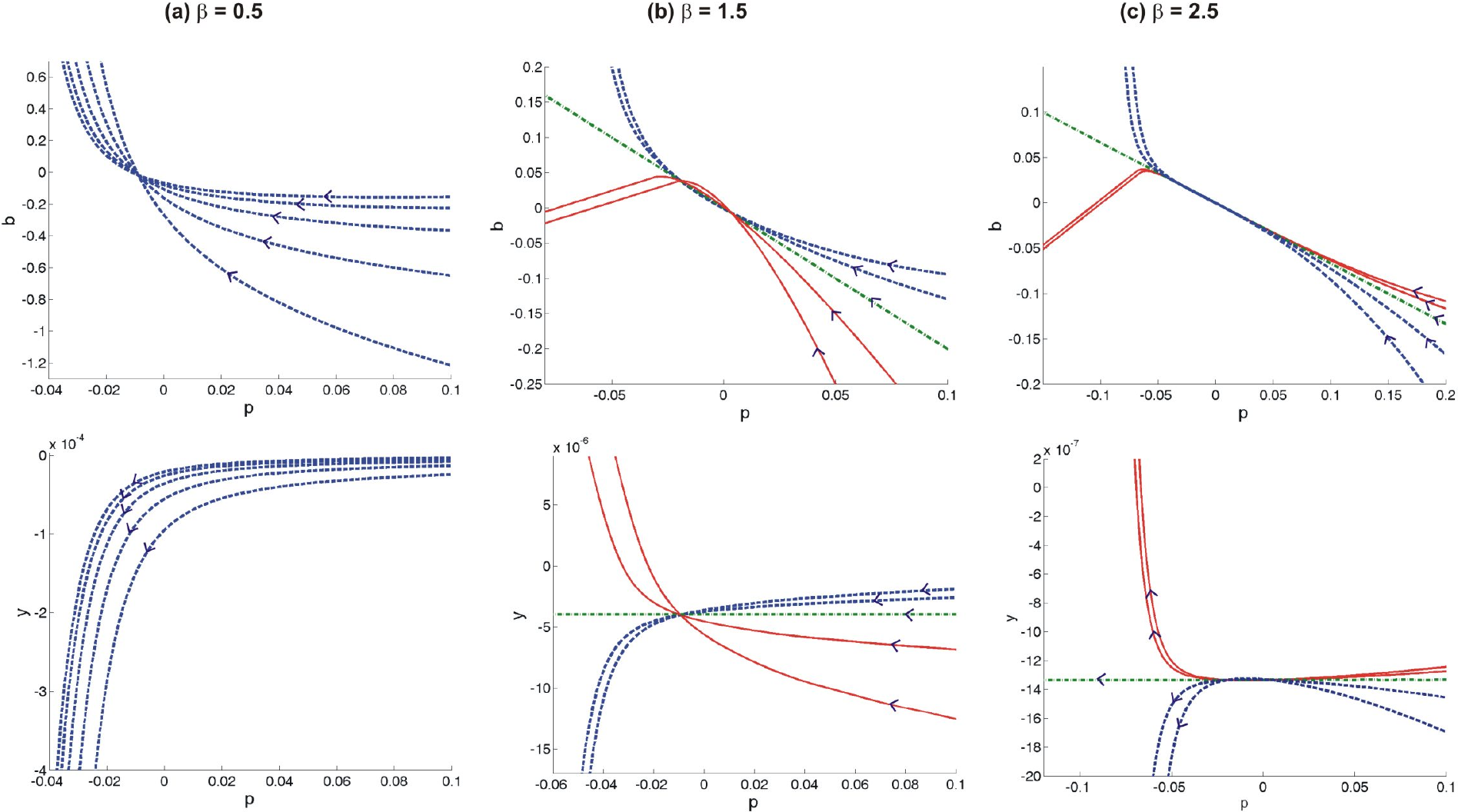}
\caption{Numerical simulation of \eqref{eq:modelsys1}--\eqref{eq:modelsys4}
for $\eps=10^{-3}$ with $\alpha_1^*=\alpha_2^*=-1$ and $\alpha_3^*=-\beta$,
where; (a) $\beta= 0.5$, 
(b)  $\beta=1.5$, or (c) $\beta=2.5$. 
Initial conditions in each case are $p(0)=0.5$; 
$b(0)=-| \nu \cdot p(0)\alpha_2^*/(\alpha_3^*-\alpha_1^*)|$ where $\nu=0.25$, $0.5$, 1, 2, 4 $y(0)=2\eps^2b(0)/p(0)$, $z(0)=\eps^2b(0)/p(0)$, $v(0)=0$.
In each plot, a dot-dashed (green online) line depicts the trivial solution, 
solid (red online) curves represent trajectories that lift off,
whereas dashed (blue online) curves represent trajectories that take an IWC. 
}
\label{fig:varsim1}
\end{center}
\end{figure}

First of all, note that the internal dynamics of the compliant contact
model creates damped oscillations.  Clearly, this is an artefact of
our contact model and not important to our discussion. Note though an
important feature of these oscillations is that their frequency
diverges to $\infty$ in the limit of $\eps\rightarrow 0$. This lack of
smoothness allows us to separate this component of the dynamics in any
subsequent analysis. In our preliminary simulations we minimise
transient oscillations by choosing initial conditions satisfying
$z=y/2$, $v=0$ and $y=2b\eps^2/p$ and by choosing an initial value
$p=0.5$, which is sufficiently distant from the G-spot to allow for
the $z$ dynamics to relax.

The results of three
simulations for different values of
the scalar parameter
\begin{equation}
  \beta=\frac{\alpha_3^*}{\alpha_1^*}
\label{eq:beta}
\end{equation}
are presented in Fig. \ref{fig:varsim1}. 
Panel (a) of the figure
corresponds to Case I where $\alpha_1^*< \alpha_3^* < 0$. Here we see
that for all initial conditions that become attracted to the G-spot,
$y$ diverges to $-\infty$, which indicates the onset of an IWC. 
The trivial solution in this case is 
unphysical ($bp > 0$, implying $\lambda_N<0$) and is not shown. 
We found the results in
Case II to be similar, that is, all initial conditions that pass the
G-spot take an impact. 

Panels (b) and (c) of Fig.~\ref{fig:varsim1} 
illustrate two different
examples of Case III where $\beta$ is 1.5 and 2.5,
respectively. Here we see that there is a dichotomy, in that there
are some trajectories that immediately lift off 
(which can be seen
because $y$ increases rapidly), whereas other trajectories 
take an IWC.
The trivial solution appears to form a
separatrix between these two behaviours. Interestingly, the set of
initial conditions that impact or lift off are swapped, between the two 
examples shown. That is, initial conditions with lower initial values
of $b(0)$ are the ones that take an impact in panel (b) whereas it is
those with the higher $b(0)$ that take an impact in panel (c).

\subsection{Smoothness in the limit of $\epsilon\rightarrow 0$}

The trajectories presented above not only differ in their asymptotic
behaviour for large times, but also in their degree of smoothness as a function
of time in the limit $\epsilon\rightarrow 0$. To illustrate this property, we
have repeated the same simulations with $\epsilon=10^{-5}$. The
results are illustrated as plots of $y$ as a function of $p$ in
Fig. \ref{fig:smoothness} for all three values of $\beta$. Note that $p$
scales linearly with time.  The trivial
separatrix solutions appear for $\beta=1.5$ and 2.5 as a straight line,
which is by definition, infinitely smooth. 
At the same time, all other trajectories show
some kind of divergence as $p \to 0$. For $\beta =0.5$, $y$ appears to
to diverge to infinity in the limit of small $p$. In contrast,
for $\beta=1.5$, the first derivative of $y$ appears to
diverge. For $\beta=2.5$, a more detailed analysis (not shown) shows
divergence of the second derivative of $y$. As we shall see shortly, 
systematic variation of $\epsilon$ also confirms these observations.

Now, for the simple model system, we have a trivial solution that forms
the separatrix. For more general systems (as for example
\eqref{eq:newex1}--\eqref{eq:newex6} with $\chi=1$) we shall demonstrate in 
Sec.~\ref{sec:5} below that there nevertheless exists a separatrix trajectory
in Case III, which
preserves a higher
degree of smoothness in the limit $\eps\rightarrow 0$ than any other
trajectory. Specifically, the  method of construction will be used to develop an
expansion for the trajectory that is at least $C^\infty$ in $t$ and $\eps$. 
This property enables us to disregard 
all other trajectories (either with or without oscillatory
components) that are less smooth in the limit $\eps \to 0$.

\begin{figure}
\begin{center}
\includegraphics[width=0.5\textwidth]{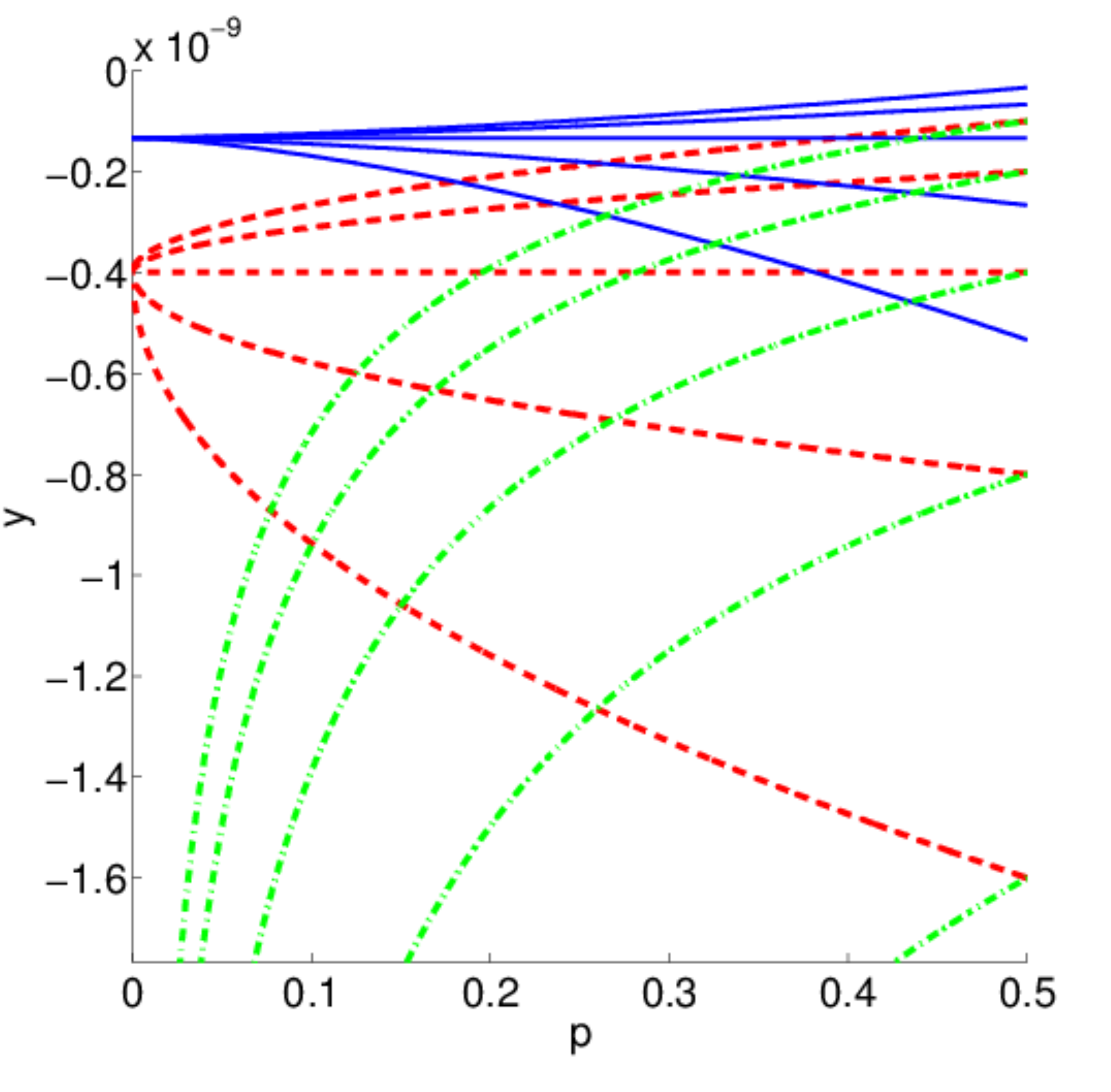}
\caption{Numerical simulation of \eqref{eq:modelsys1}--\eqref{eq:modelsys4}
for $\eps=10^{-5}$ with:  dot-dashed line (green online) $\beta=0.5$; 
dashed line (red online) $\beta=1.5$; and solid line (blue online) $\beta=2.5$.
Other values are the same as in previous simulations.}
\label{fig:smoothness}
\end{center}
\end{figure}

\subsection{Asymptotic behaviour for $\epsilon\rightarrow 0$}
\label{sec:aymptoticsim}

Our analysis of the dynamics near the G-spot will make use of a
carefully chosen {\em inner scaling} of the variables. To motivate the
particular scaling chosen in Sec.~\ref{sec:5}, we now present
the numerically observed dependence on
$\epsilon$ of the dynamics of the model
system. We will use letters with a hat ( $\hat{}$ ) for the deviation
of variables $p$, $b$, $y$, $v$, $z$ from their values along the
trivial solution, for example.
$$
b(t,\epsilon)=\bar{b}(t,\epsilon)+\hat b(t,\epsilon).
$$
To learn how $\hat b$, $\hat y$ and $\hat v$ scale as
$\epsilon\rightarrow 0$, we have recorded their values at the time of
passing the G-spot ($p=0$) in a series of simulations where
$\epsilon$ was varied systematically. Three values of $\beta$
 were
considered and the initial conditions used were the same as in the caption
of Fig.~\ref{fig:varsim1} with $\nu=2$. The results for $\hat b$ are
depicted in Fig.~\ref{fig:sim-bscaling}. We found a nearly perfect
power-law relationship $\hat b\approx \epsilon^\gamma$ for constant times
determined by $p=0$, at least for sufficiently small 
$\epsilon$,  where the exponent $\gamma$ was determined by linear regression,
see Table \ref{tab:exponents}(a).
Similar results were obtained for $\hat v$ and $\hat y$. 

\begin{figure}
\begin{center}
\includegraphics[width=0.5\textwidth]{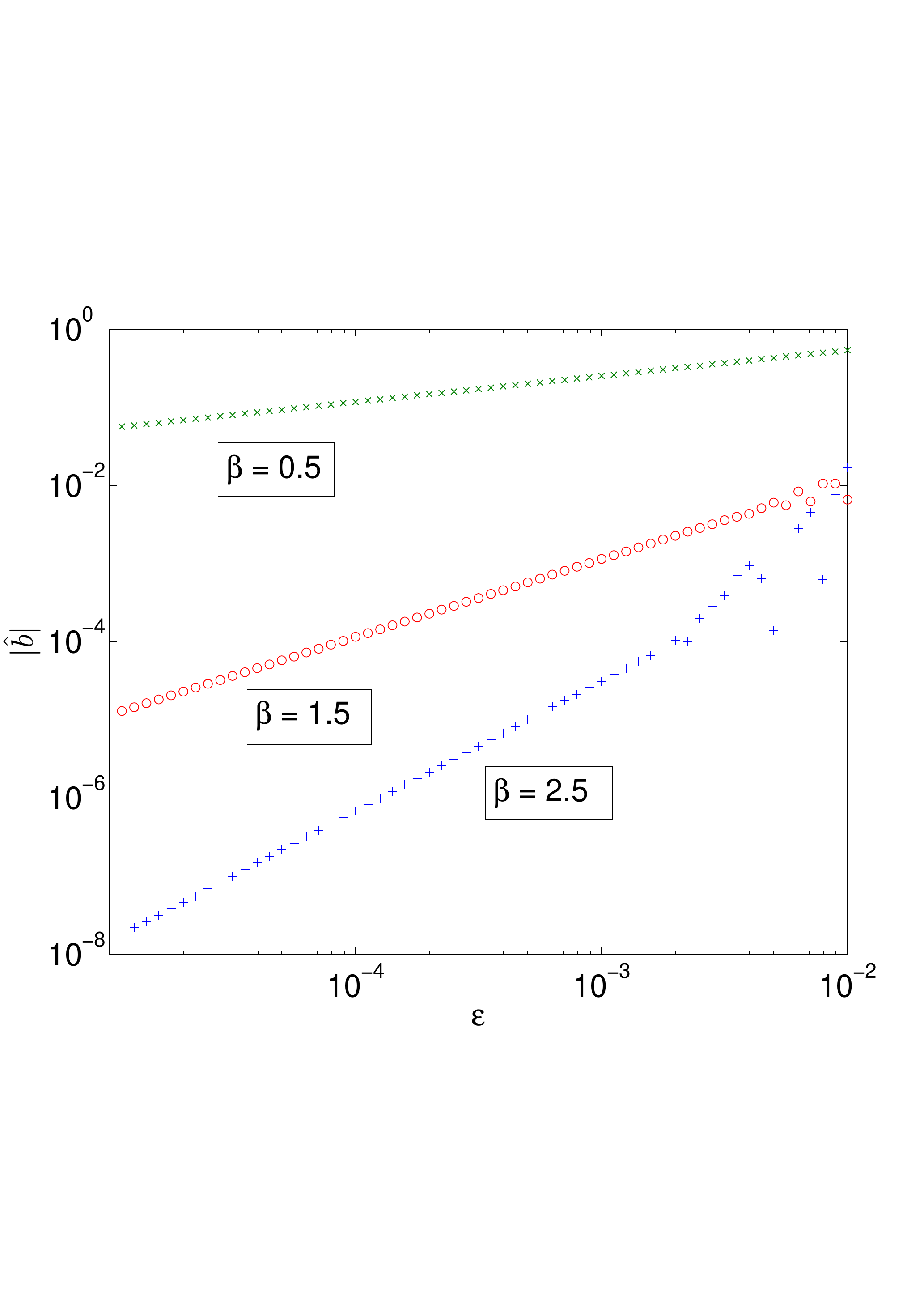}
\caption{Logarithmic plot of $\hat b$ at the time when $p=0$ as $\eps$ varies,
for a perturbed initial condition close to the $G$-spot. See text for details.
} 
\label{fig:sim-bscaling}
\end{center}
\end{figure}

In a similar manner, we have also measured the time
difference between passing the G-spot ($p=0$) and crossing
$\hat b=0$, see the last row of the table. Clearly the exponent of the
time-difference is close to 2/3 whereas other exponents appear to
depend on $\beta$ linearly. 
The general asymptotic theory in Secs.~\ref{sec:4} and \ref{sec:5} below 
predict that these exponents should, in the limit $\eps \to 0$, take the 
values $2\beta/3$ ($\hat b$), $(2\beta+2)/3$ ($\hat
v$) and $(2\beta+4)/3$ ($\hat y$).
Table \ref{tab:exponents}(b) gives the theoretical
values according to these formulae. We see that there is excellent agreement
with the numerical findings. 

\begin{table}
\begin{center}
\begin{tabular}{c|ccc|c|ccc|}
\cline{2-8}
& \multicolumn{3}{|c|}{ (a) {\bf Measured}} &
\multicolumn{4}{|c|}{ (b) {\bf Theoretical}}  \\
\cline{2-8}

&  $\beta=0.5$ & $\beta=1.5$ & $\beta=2.5$ & & 
$\beta=0.5$ & $\beta=1.5$ & $\beta=2.5$ \\  
\hline
\multicolumn{1}{|c|}{exponent for $\hat b$} &0.3326&0.9982&1.6635 & $2\beta/3$  & 0.3333 & 1.0  & 1.6667\\ 
\multicolumn{1}{|c|}{exponent for $\hat v$} &0.9983&1.6652&2.3396 &  $(2\beta+2)/3$ &  1.0 & 1.6667 & 2.3333\\
\multicolumn{1}{|c|}{exponent for $\hat y$} & 1.6538&2.3384&2.9808 &  $(2\beta+4)/3$   & 1.6667 & 2.3333 & 3.000\\
\multicolumn{1}{|c|}{exponent for $t$} & 0.6650 & 0.6656 &0.6659 &    $2/3$  &  0.6667 & 0.6667 &0.6667  \\
\hline 
\end{tabular}
\caption{(a) Numerically measured scaling exponents $\gamma$ such that the
named quantity in the first column scales like $\eps^\gamma$  
as $\epsilon\rightarrow 0$. (b) Theoretical values of these exponents
according to the asymptotic theory of Secs.~\ref{sec:4} and \ref{sec:5}.}
\label{tab:exponents}
\end{center}
\end{table}

\subsection{Possible dynamics beyond the G-spot}

The above dynamics simply illustrate the scaling of trajectories and
whether they lift off or take an IWC. What
happens after these two possible events is also interesting in its own
right.  Firstly, after lift-off we have $\lambda_N = 0$, and thus
$\dot b=\alpha_2^*$. In Case II, $\dot b > 0$, which means that $b$,
$y$ and $v$ will increase, and lift-off will persist for some
time. Nevertheless in Cases I and III, $\dot b < 0$, which eventually
cause $v$ and $y$ to decrease as well. Hence, lift-off will --- at
least in the limit of $\eps\rightarrow 0$ --- always terminate shorty
after passing the G-spot, and an impact with very low pre-impact
normal velocity (i.e. a quasi-IWC) will occur.

In order to examine what happens once an IWC is initiated, we have
to consider the extended version of the example system \eqref{eq:compl2}, \eqref{eq:xdot}-\eqref{eq:udot} and \eqref{eq:newex1}--\eqref{eq:newex5},
which includes tangential dynamics and possible transitions from slip
to stick. 

Two simulations are presented in Fig.~\ref{fig:varsim2}. In each one,
we use the same parameter values and some of the 
initial conditions used in 
Fig.~\ref{fig:varsim1}(a), but compute for a longer timespan. 
The initial value of $u$ is $u(0) = 70$ 
whereas the initial value of $x(0)$ is arbitrary,
since $x$ is a cyclic coordinate.  Furthermore we use $\chi = 0$ in
the first simulation and $\chi = 1$ in the second.  
In the first case (continuous curve, red online), 
we observe that the near-tangential impact continues all
the way until the contact sticks ($\dot u = 0$). However in the second
(dashed curve, blue online), the large contact force initiates a rapid
increase of $b$ (due to $\alpha_3^*<0$). For large enough $u(0)$, the
variables $p$, $v$ and $y$ all begin to increase before the contact
sticks. In this case the impact will terminate and a lift-off occur before
we reach all the way to $u=0$.

\begin{figure}
\begin{center}
\includegraphics[width=0.8\textwidth]{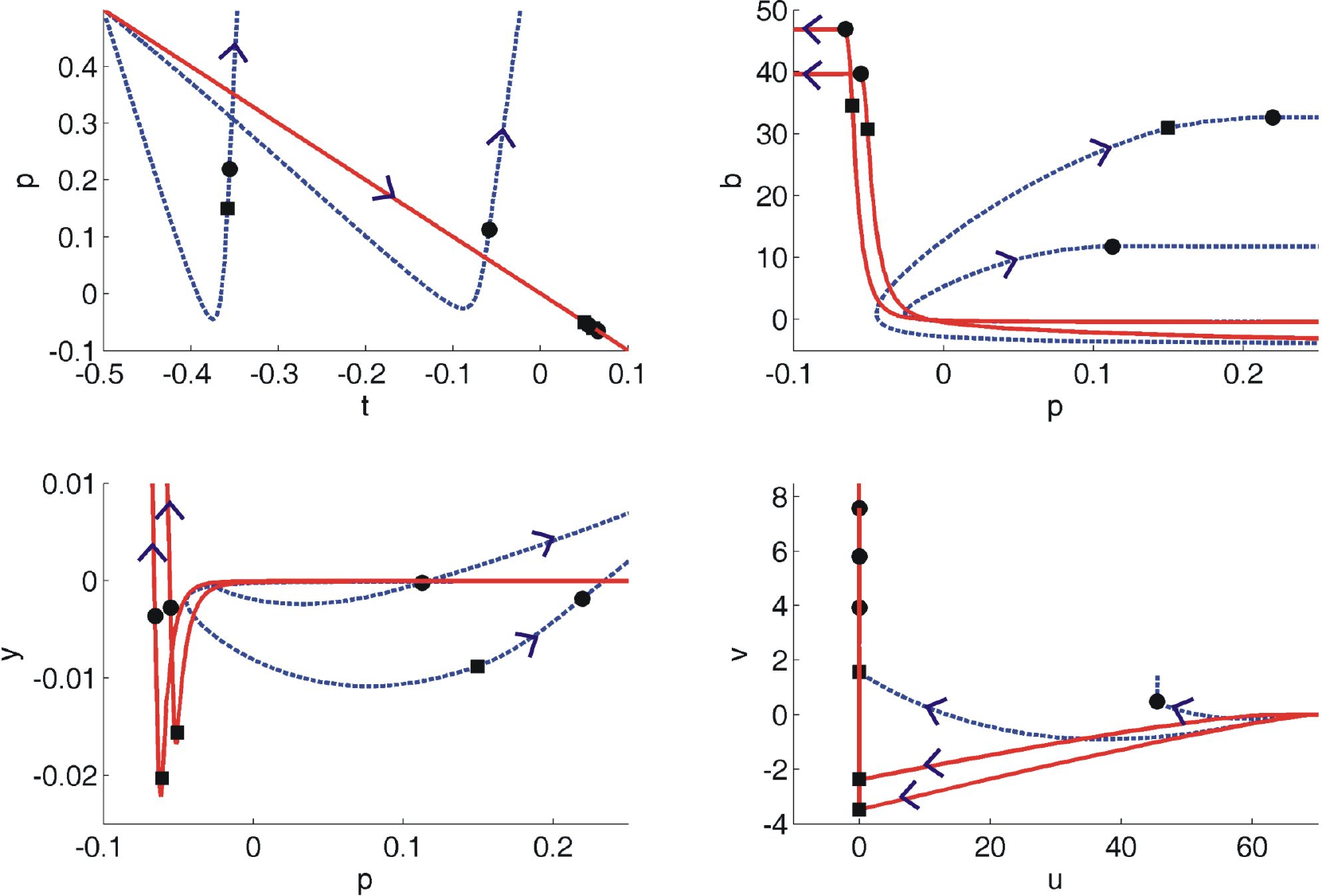}
\caption{ Two-dimensional projections of simulation results of the
 extended model system with $u(0)=70$ and $\chi=0$ (solid curve, red
 online) or $\chi=1$ (dashed curve, blue online). Other parameter
 values are as in Fig.~\ref{fig:varsim1}(a) except that results
for two different $\nu$-values are shown,  namely $\nu=0.25$ and
 $4$. Squares and circles denote slip-stick transitions and liftoff,
 respectively. Notice that liftoff occurs without slip-stick
 transition in one out of four cases.  }
\label{fig:varsim2}
\end{center}
\end{figure}

\section{Asymptotic analysis of the motivating example}
\label{sec:4}

Before presenting general analysis, it is useful to explore the major ideas 
using the simplified version of the model system 
\eqref{eq:modelsys1}--\eqref{eq:modelsys5} for
which the details are eased because of the 
existence of a trivial smoothest solution
\eqref{eq:modelSysDistP}--\eqref{eq:modelSysDistV}.
We assume throughout that the system is in contact, so that
\begin{equation*}
  \eps^2\lambda_N = z-y,
\end{equation*}
and we suppose that $\alpha_1^*<0$, $\alpha_3^*<0$.

If we fix the origin of time so $p(0)=0$, then
$p(t)=\alpha_1^*t$. Inserting this into the other equations, 
we can eliminate $y$, $v$, and $\lambda_N$ by differentiating 
the $\dot{b}$ equation with respect to time three times and eliminating 
$z$ and its derivatives via 
$$
\alpha_3^*(z-y) = \eps^2\left (\alpha_2 - \frac{db}{dt} \right), \qquad
\alpha_3^*(\dot{z}-\dot{y}) = -\eps^2\frac{d^2b}{dt^2}, \qquad 
\alpha_3^*(\ddot{z}-\ddot{y}) = -\eps^2\frac{d^3b}{dt^3}.
$$
We obtain
\begin{equation*}
  \alpha_3^*b+\alpha_1^*t\left(\alpha_2^*-\frac{db}{dt}\right)+\epsilon\left[\alpha_3^*\frac{db}{dt}+\alpha_1^*\left(\alpha_2^*-\frac{db}{dt}\right)-\alpha_1^*t\frac{d^2b}{dt^2}\right]-2\epsilon^2\frac{d^3b}{dt^3}-\epsilon^3\frac{d^4b}{dt^4}=0,\label{eq:modelSysTEliminated}
\end{equation*}
which is a more convenient 3rd-order single equation for $b(t)$. 
Note that the other variables 
can be recovered via
\begin{align}
  \alpha_3^*y&=\epsilon^2\left[\alpha_3^*b+\alpha_1^*t\left(\alpha_2^*-\frac{db}{dt}\right)-2\left(\alpha_2^*-\frac{db}{dt}\right)\right]-\epsilon^3\frac{d^2b}{dt^2}-\epsilon^4\frac{d^3b}{dt^3},\label{eq:modelSysRecoverY}\\
  \alpha_3^*v&=-\epsilon\left[\alpha_3^*b+\alpha_1^*t\left(\alpha_2^*-\frac{db}{dt}\right)\right]+2\epsilon^2\frac{d^2b}{dt^2}+\epsilon^3\frac{d^3b}{dt^3},\label{eq:modelSysRecoverV}\\
  \alpha_3^*z&=\epsilon^2\left[\alpha_3^*b+\alpha_1^*t\left(\alpha_2^*-\frac{db}{dt}\right)-\left(\alpha_2^*-\frac{db}{dt}\right)\right]-\epsilon^3\frac{d^2b}{dt^2}-\epsilon^4\frac{d^3b}{dt^3}, \label{eq:modelSysRecoverZ}\\
  \alpha_3^*\lambda_N&=\alpha_2^*-\frac{db}{dt} \label{eq:modelSysRecoverLN}.
\end{align}

We now want to find a time rescaling to de-singularise the
$G$-spot. One possibility would be to rescale time using the value of
$p$, \eqref{eq:time_rescale}, leading to
\eqref{eq:Gspot1}--\eqref{eq:Gspot2} as in \cite{Genot1999}.
Unfortunately, such a rescaling is too brutal to obtain information on
what happens beyond the G-spot, not least because the system can only
be defined for $p>0$.  Instead, we shall seek a scaling in terms of
the parameter $\eps$ of the contact regularised system. In so doing we
will get an {\em outer} dynamical system, which will take the form of
a fast-slow system \cite{Kuhn}.  Then we introduce a new {\em inner}
timescale which is $\mathcal{O}(\eps^{2/3})$. This gives the ability
to find a distingished limit in which the singularity associated with
the $G$-spot is balanced by the contact dynamics.

\subsection{A distinguished trajectory}

We shall start by considering the explicit trivial solution 
\eqref{eq:modelSysDistP}--\eqref{eq:modelSysDistV}. Note that this solution
is smooth in both the variables $t$
and $\epsilon$. If the parameter $\beta$ (see \ref{eq:beta})
is not an integer, we will now show that it is the only solution that is
smooth in $t$ and $\epsilon$, and thus we designate it as the 
{\em distinguished trajectory}.

Such a smooth trajectory must have an expansion in $\epsilon$
\begin{equation*}
  \bar{b}(t,\epsilon)=\sum_nb_n(t)\epsilon^n
\end{equation*}
such that each $b_n(t)$ is a smooth function. Inserting this into
\eqref{eq:modelSysTEliminated}, we find to order $\epsilon^0$ that
\begin{equation*}
  \alpha_3^*b_0+\alpha_1^*t\left(\alpha_2^*-\frac{db_0}{dt}\right)=0,
\end{equation*}
which has general solution
\begin{equation*}
  b_0(t)=\frac{\alpha_2^*}{1-\beta}t+
  \begin{cases}
    C_1(-t)^\beta & \text{if $t\le0$}\\
    C_2(t)^\beta & \text{if $t\ge0$}
  \end{cases}.
\end{equation*}
Since $\beta$ is not an integer, $b_0$ is not smooth unless
$C_1=C_2=0$.
Inserting this solution into the order $\epsilon^1$ equation we get
\begin{equation*}
  \alpha_3^*b_1-\alpha_1^*t\frac{db_1}{dt}=0,
\end{equation*}
and again smoothness forces $b_1(t)=0$. Proceeding similarly, at 
$\mathcal{O}(\epsilon^n)$, we get 
\begin{equation*}
  \alpha_3^*b_n-\alpha_1^*t\frac{db_n}{dt}=0
\end{equation*}
and thus we need to choose $b_n(t)=0$ for all $n>0$.

Hence the requirement of smoothness leads uniquely to
\begin{equation*}
\bar{b}(t,\epsilon)=\frac{\alpha_2^*}{1-\beta}t,
\end{equation*} 
from which we can recover the rest of the trivial solution
\eq{eq:modelSysDistP}--\eq{eq:modelSysDistV}, for the 
variables $y$, $v$ and $z$ using
\eqref{eq:modelSysRecoverY}--\eqref{eq:modelSysRecoverZ}.

\subsection{Deviations from the distinguished trajectory: outer
  scaling}

We now wish to consider trajectories whose initial 
conditions near the G-spot are small perturbations from the distinguished
trajectory. Recall that $\hat{b}$ denotes deviations from $\bar{b}$. 
Then, $\hat{b}$ satisfies
\begin{equation}
  \alpha_3^*\hat{b}-\alpha_1^*t\frac{d\hat{b}}{dt}+\epsilon\left[ \left(\alpha_3^*-\alpha_1^*\right)\frac{d\hat{b}}{dt}-\alpha_1^*t\frac{d^2\hat{b}}{dt^2}\right]-2\epsilon^2\frac{d^3\hat{b}}{dt^3}-\epsilon^3\frac{d^4\hat{b}}{dt^4}=0,
\label{eq:modelSysDev}
\end{equation}
while the deviations of the other variables from the distinguished
trajectory can be recovered using
\begin{align}
  \alpha_3^*\hat{y}&=\epsilon^2\left[\alpha_3^*\hat{b}-\alpha_1^*t\frac{d\hat{b}}{dt}+2\frac{d\hat{b}}{dt}\right]-\epsilon^3\frac{d^2\hat{b}}{dt^2}-\epsilon^4\frac{d^3\hat{b}}{dt^3}\label{eq:modelSysDevRecoverY}\\
  \alpha_3^*\hat{v}&=-\epsilon\left[\alpha_3^*\hat{b}-\alpha_1^*t\frac{d\hat{b}}{dt}\right]+2\epsilon^2\frac{d^2\hat{b}}{dt^2}+\epsilon^3\frac{d^3\hat{b}}{dt^3}\label{eq:modelSysDevRecoverV}\\
  \alpha_3^*\hat{z}&=\epsilon^2\left[\alpha_3^*\hat{b}-\alpha_1^*t\frac{d\hat{b}}{dt}+\frac{d\hat{b}}{dt}\right]-\epsilon^3\frac{d^2\hat{b}}{dt^2}-\epsilon^4\frac{d^3\hat{b}}{dt^3}\label{eq:modelSysDevRecoverZ}\\
  \alpha_3^*\hat{\lambda}_N&=-\frac{d\hat{b}}{dt}\label{eq:modelSysDevRecoverLN}.
\end{align}

Assuming $t$ is not close to zero, we can identify two timescales
in \eqref{eq:modelSysDev}; a slow timescale of order $\mathcal{O}(1)$
and a fast timescale of order $\mathcal{O}(\epsilon)$.

\paragraph*{The fast system.} Introducing a fast timescale $t_f$ via 
$dt = \eps dt_f$, and reintroducing $p=\alpha_1^* t$,
we find that equation \eqref{eq:modelSysDev} becomes
\begin{equation*}
  -\left[p\left(\frac{d\hat{b}}{dt_f}+\frac{d^2\hat{b}}{dt_f^2}
\right)+2\frac{d^3\hat{b}}{dt_f^3}+\frac{d^4\hat{b}}{dt_f^4}\right]+\epsilon
\left[\alpha_3^*\hat{b}+\left(\alpha_3^*-\alpha_1^*\right)\frac{d\hat{b}}{dt_f}
\right]=0.
\end{equation*}
Setting $\eps=0$, thus treating $p$ as a constant, and looking for
exponential solutions to the resulting linear constant coefficient
equation, we get the characteristic polynomial 
$$
-\lambda\left[\lambda^3+2\lambda^2+p\lambda+p\right]=0
$$ 
where the
first factor gives a zero root corresponding to the slow time scale,
whereas the non-trivial second factor corresponds to the dynamics of the fast
system. For this second factor, if $p>0$, the Routh-Hurwith criterion
implies that all roots have negative real parts, hence the fast
subsystem is stable. Hence, for $p>0$ trajectories are
attracted to a codimension-three manifold, representing the slow
dynamics.

In contrast, if $p<0$, then the discriminant of the second factor 
$\Delta=-p(4(p)^2-13p+32)$ is always positive, which means that 
the fast system has three real eigenvalues. Note further that the sum
of the eigenvalues is $-2$ whereas their product is $-p$, which implies
that precisely one eigenvalue out of the the three is positive for $p<0$. 
Hence the slow dynamics for $p<0$ is normally hyperbolic with a two-dimensional
stable manifold and one-dimensional unstable manifold.  

\paragraph{The slow system} is obtained from 
\eqref{eq:modelSysDev} by letting $\epsilon\rightarrow0$ so that we obtain  
\begin{equation*}
  \alpha_3^*\hat{b}-\alpha_1^*t\frac{d\hat{b}}{dt}=0,
\end{equation*}
with solution
\begin{equation}
  \hat{b}(t)=
  \begin{cases}
    C_1(-t)^\beta & \text{if $t\le0$}\\
    C_2(t)^\beta & \text{if $t\ge0$}\\
  \end{cases}
\label{eq:example_outer}
\end{equation}
and
\begin{equation*}
  \frac{\hat{y}}{\epsilon^2}=2\frac{\hat{b}}{\alpha_1^*t},\quad
  \frac{\hat{v}}{\epsilon}=0,\quad
  \frac{\hat{z}}{\epsilon^2}=\frac{\hat{b}}{\alpha_1^*t},\quad
  \hat{\lambda}_N=-\frac{\hat{b}}{\alpha_1^*t}, \quad
\beta=\frac{\alpha_3^*}{\alpha_1^*}.
\end{equation*}

%

\subsection{Inner scaling}

When $t$ is close to zero, we can introduce a rescaled 
time variable via
$$
t= \delta^2 s, \qquad \mbox{where} \quad  \delta=\epsilon^{1/3},
$$ 
in accordance with the observations in Table \ref{tab:exponents}(a). Under
such a rescaling, \eqref{eq:modelSysDev} becomes
\begin{equation}
  \alpha_3^*\hat{b}-\alpha_1^*s\frac{d\hat{b}}{ds}-2\frac{d^3\hat{b}}{ds^3}+\delta\left[\left(\alpha_3^*-\alpha_1^*\right)\frac{d\hat{b}}{ds}-\alpha_1^*s\frac{d^2\hat{b}}{ds^2}-\frac{d^4\hat{b}}{ds^4}\right]=0.\label{eq:modelSysDevInner}
\end{equation}
Again we can identify two timescales: a slow timescale of order
$\mathcal{O}(\epsilon^{2/3})$ ($\mathcal{O}(1)$ in $s$), and a fast
timescale of order $\mathcal{O}(\epsilon)$
($\mathcal{O}(\epsilon^{1/3})$ in $s$).

\paragraph*{The fast time scale} is defined by defining a new time
variable $s_f$ so that  $ds=\delta ds_f$. From this, \eqref{eq:modelSysDevInner}
becomes
\begin{equation*}
  -2\frac{d^3\hat{b}}{ds_f^3}-\frac{d^4\hat{b}}{ds_f^4}+\delta^2\alpha_1^*s\left(\frac{d\hat{b}}{ds_f}+\frac{d^2\hat{b}}{ds_f^2}\right)+\delta^3\left[\alpha_3^*\hat{b}+\left(\alpha_3^*-\alpha_1^*\right)\frac{d\hat{b}}{ds_f}\right]=0.
\end{equation*}
Letting $\epsilon\rightarrow0$ we have a characteristic equation
$-\lambda^3\left[\lambda+2\right]=0$. There are three zero
roots corresponding to the slow time scale, and one real root
$\lambda=-2$. We conclude that the fast system is stable. 

\paragraph*{The slow timescale} is obtained by letting
$\delta\rightarrow0$ in \eqref{eq:modelSysDevInner} from which we obtain
\begin{equation}
  \alpha_3^*\hat{b}-\alpha_1^*s\frac{d\hat{b}}{ds}-2\frac{d^3\hat{b}}{ds^3}=0.\label{eqn:de_inner}
\end{equation}
This equation has solutions that can be expressed in terms of
generalised hypergeometric functions. Specifically, upon rescaling 
$$
\tau=\kappa s, \qquad \mbox{with} \quad \kappa =(-\alpha_1^*/2)^{1/3} >0
$$ 
and setting $b(s) = \theta(\tau)$ we get
\beq
  \frac{d^3}{d \tau^3}\theta-\tau\frac{d}{ d\tau} \theta+\beta\theta=0,
\label{eq:hypergeomeqn}
\eeq
for
$$
\beta=\alpha_3^*/\alpha_1^*.
$$
The equation \eqref{eq:hypergeomeqn} has a 
solution that can be expressed in terms of generalised 
hypergeometric functions, whose asymptotic properties are summarised
in Appendix \ref{A:1}. From that solution, we can recover
all other variables from $\hat{b}$ via
\begin{equation}
  \delta^{2}\alpha_3^*\frac{\hat{y}}{\epsilon^2}=2\frac{d\hat{b}}{ds},\quad
  \delta^{2}\alpha_3^*\frac{\hat{v}}{\epsilon}=2\frac{d^2\hat{b}}{ds^2},\quad
  \delta^{2}\alpha_3^*\frac{\hat{z}}{\epsilon^2}=\frac{d\hat{b}}{ds},\quad
  \delta^{2}\alpha_3^*\hat{\lambda}_N=-\frac{d\hat{b}}{ds}.\label{eq:inner_scaling}
\end{equation}

\subsection{Matching the inner and outer solutions}
\label{subsec:4.4}
We know that the outer solution behaves like
\begin{equation}
\hat{b}(t) \sim C_1(-t)^\beta  \qquad \mbox{as} \quad 
t\rightarrow 0^-.
\label{eq:innerasymptote}
\end{equation}
and this must match
the behaviour of the inner solution $\hat{b}(s)$ as
$s\rightarrow-\infty$.

The asymptotics of the solution inner region
$s \to \pm \infty$ can be established from the asymptotic behavour of 
$\theta(\tau)$, given by 
\eqref{eq:hypergeomeqn} which is studied in Appendix \ref{A:1}. 
The general solution is a linear combination of a
function $h$ with power-law type behaviour and two rapidly oscillating
functions $e_r$ and $e_i$. To get the desired behaviour, the
coefficients of $e_r$ and $e_i$ must vanish, and there is a unique solution
$\Theta(\tau,\beta)$ that behaves like $(-\tau)^\beta$ as $\tau\rightarrow-\infty$. 

Matching with the small $t$ limit \eqref{eq:innerasymptote} with the
large negative $\tau$ limit, we 
find that the leading-order inner solution matches if we take it to be
\begin{equation}
  \hat{b}(s)=C_1\kappa^{-\beta}\eps^{2\beta/3}\Theta(\kappa
  s,\beta)\sim C_1\kappa^{-\beta}\eps^{2\beta/3}(-\kappa s)^\beta=
  C_1(-\epsilon^{2/3} s)= C_1(-t)^\beta, \label{eq:bhat_inner}
\end{equation}
where again $\kappa = (-\alpha_1^*/2)^{\frac{1}{3}}$.

Combining \eqref{eq:bhat_inner} with \eqref{eq:inner_scaling}, 
we find the scaling for all
the perturbation variables of the inner solution
\begin{eqnarray*}
  \hat{b}(s)&=&\mathcal{O}(\eps^{2\beta/3}),\\
  \hat{y}(s)&=&\mathcal{O}(\eps^{(2\beta+4)/3}),\\
  \hat{v}(s)&=&\mathcal{O}(\eps^{(2\beta+2)/3}),\\
  \hat{z}(s)&=&\mathcal{O}(\eps^{(2\beta+4)/3}),\\
  \hat{\lambda}_N(s)&=&\mathcal{O}(\eps^{(2\beta-2)/3}),
\end{eqnarray*}
when $s=\mathcal{O}(1)$. Recalling that $t=\eps^{2/3}s$, we obtain the 
theoretical predictions given in Table \ref{tab:exponents}.

\subsection{Interpretation for the dynamics}
\label{subsec:4.5}
To determine whether lift-off ($\lambda_N=0$) 
or impact-like behaviour ($\lambda_N$ large) occurs for the inner solution
we study
\begin{equation*}
  \lambda_N=\bar{\lambda}_N+\hat{\lambda}_N=\frac{\alpha_2^*}{\alpha_1^*-\alpha_3^*}-
  \frac{C_1}{\alpha_3^*}\kappa^{1-\beta}\eps^{(2\beta-2)/3}\frac{d\Theta}{d\tau}(\tau,\beta).
\end{equation*}
In $\hat{\lambda}_N$, we must consider that $\eps^{(2\beta-2)/3}$ is
very large or very small depending on whether $\beta<1$ or
$\beta>1$. If $\beta<1$, a sign change from positive to negative for
$C_1d\Theta/d\tau$ means lift-off, whereas a positive sign throughout
means impact. If $\beta>1$, the size of $d\Theta/d\tau$ must be
very large to have any effect.
From Theorems \ref{th:4} and \ref{th:5} in 
Appendix \ref{A:1},  we know that
\begin{equation}
  \frac{d\Theta}{d\tau}(\tau,\beta)\begin{cases}
  \text{is negative} &\text{ for $\tau$ large and negative,}\\
  \text{has sign of }\frac{\Gamma\left(\frac{1-\beta}{3}\right)}{\Gamma(-\beta)}&\text{ for $\tau=0$,}\\
  \text{is very large with sign of }\frac{1}{\Gamma(-\beta)}&\text{ for $\tau$ large and positive.}\\
  \end{cases}\label{eq:dTheta_sign}
\end{equation}
Here $Gamma$ represents the Gamma function, and we recall 
that $\Gamma(-\beta)$ is negative (respectively positive) whenever 
$\beta \in (2n-1,2n)$ for positive integers $n$
(respectively $\beta \in (2n-2,2n)$). Using this we can decide whether or not
impact of lift-off happens as $t$ passes through zero.

We are now in a position to piece together what happens to initial conditions
that in the outer scaling approach the $G$-spot. Let us treat two separate
cases.
\begin{description}
\item[Cases I and II, $0<\beta<1$.] In this case, the distinguished
  trajectory represents the strong stable manifold of the G-spot in
  the singular system.  Note that all trajectories of interest have
  $C_1<0$. From \eqref{eq:dTheta_sign} we can see that in the inner
  solution, $\hat{\lambda}_N>0$ for all three $\tau$
  regimes. Numerical computations of the derivative of $\Theta$
  supports that $\hat{\lambda}_N>0$ for all
  $\tau=\mathcal{O}(1)$. Further $\hat{\lambda}_N$ always dominates
  $\bar{\lambda}_N$ for small $\eps$.
Together, this is strong evidence that an impact must occur, although not quite a proof because it is possible that although $\Theta$ 
is very negative for large
$|\tau|$ and for $\tau=0$ it is conceivable that it might become 
positive for some intermediate $\tau$-value. We have found no numerical evidence
that such a possibility occurs. 

\item[Case III, $\beta >1$.] We now no longer need to limit ourselves
  to trajectories with $\hat{b}(\tau)<0$ for large negative $\tau$,
  and so we need to consider both possible signs of $C_1$ in
  \eqref{eq:bhat_inner}. Also $\bar{\lambda}_N$ dominates
  $\hat{\lambda}_N$ for small $\eps$ and $\tau=\mathcal{O}(1)$. Thus
  lift-off or impact is determined by the behaviour for $\tau$ large
  and positive in \eqref{eq:dTheta_sign}. We find that lift-off occurs
  when $C_1/\Gamma(-\beta)<0$, due to the very large growth rate of  
$\Theta$ (see Theorem \ref{th:5}). 
Note that $\tau_\text{lift-off}$ grows very
  slowly as $\eps\rightarrow0$, like $\log(\eps)^{2/3}$. 
For the other sign of $C_1\Gamma(-\beta)$, we are in the same situation as in 
above where there is strong evidence that an IWC occurs. 
The distinguished trajectory, obtainable
by setting $C_1=0$ is the dividing {\em canard} 
trajectory between lift-off and impact. Note
that $\Gamma(-\beta)$ changes sign whenever $\beta$ passes through
a positive integer. 
So there is a {\em side-switching} between which sign of perturbation
from the distinguished trajectory that leads to lift-off and which to
impact.
\end{description}

\section{Asymptotic analysis of a general system}
\label{sec:5}

Motivated by the previous example, we shall now consider
general systems of the form introduced in Sec.~\ref{sec:2}. 
We shall find that the hard part of the analysis is to determine the existence
and properties of a distinguished trajectory that is smooth in $t$ and
$\epsilon$. Once this is established, the behaviour of small deviations from
this trajectory will turn out to be precisely as in the motivating example. 

Again, we will assume positive slip and contact, that is
$\lambda_T=-\mu\lambda_N$ and $\eps^2\lambda_N=z-y(\xi)$. Further, we
will rename the functions $y(\xi)$, $v(\xi)$, $p(\xi)$, and $b(\xi)$
using the upper case variables $\yY(\xi)$, $\vV(\xi)$, $\pP(\xi)$, and
$\bB(\xi)$ instead.  
Then we consider $y$, $v$, $b$, and $p$ to be
additional scalar time-dependent variables, independent of $\xi$, that
extend the system, but satisfy the same differential equations as
$\yY(\xi)$, $\vV(\xi)$, $\pP(\xi)$, and $\bB(\xi)$ would.  To restore
the properties $p(t)=\pP(\xi(t))$ etc, we need only synchronise them
at one time point $t_0$. Thus our full system becomes
\begin{align}
  \dot{\xi} &= F(\xi) + G(\xi)\lambda_N, \label{eq:genSysXi}\\ 
  \dot{p}&=\alpha_1(\xi), \label{eq:genSysP}\\
  \dot{b}&=\alpha_2(\xi)-\alpha_3(\xi)\lambda_N,\label{eq:genSysB}\\
  \dot{y}&=v,\label{eq:genSysY}\\
  \dot{v}&=b+p\lambda_N, \label{eq:genSysV}\\
  \eps\dot{z}&=-z-\eps^2\lambda_N, \label{eq:genSysZ}
  \\ 
  \eps^2\lambda_N &= z-y, \label{eq:genSysLN}
\end{align}
with synchronisation conditions
\begin{equation}
y(t_0) = \yY(\xi(t_0)), \qquad v(t_0)=\vV(\xi(t_0)), \qquad p(t_0) = \pP(\xi(t_0)), \qquad b(t_0)=\bB(\xi(t_0)).\label{eq:genSysSynch}
\end{equation}

A G-spot $\xi^*$ is characterised by
\begin{equation}
  \yY(\xi^*)=0, \qquad \vV(\xi^*)=0, \qquad \pP(\xi^*)=0, \qquad
  \bB(\xi^*)=0,\label{eq:genSysGSpot}
\end{equation}
which are four (assumed to be independent) conditions on the
$m$-dimensional state $\xi$. To fix a particular G-spot, it is
convenient to introduce an $(m-4)$-dimensional additional system of equations
\begin{equation}
  \jJ(\xi^*)=0.\label{eq:genSysGSpotExtra}
\end{equation}
Local uniqueness of $\xi^*$ is guaranteed if we assume 
the non-degeneracy condition
that the $m$-dimensional Jacobian
\begin{equation}
[\pP_\xi(\xi^*),\bB_\xi(\xi^*),\yY_\xi(\xi^*),\vV_\xi(\xi^*),\jJ_\xi(\xi^*)]
\quad \mbox{is non-singular.} \label{eq:genSysGSpotNonSing}
\end{equation}

\subsection{An inner scaling}

We proceed very much as in in motivating example in Sec.~\ref{sec:4} by
adopting an inner time scale
$$
  dt=\delta^2 ds, \qquad \delta=\epsilon^{1/3}. 
$$
Note though that for the motivating example, the equation 
\eqref{eq:modelSysTEliminated} is linear in $b$, so it was not necessary to 
scale any of the dependent variables in the inner region. In general though
the system of equations \eqref{eq:genSysXi}-\eqref{eq:genSysLN} are nonlinear
in $\xi$. Therefore it is convenient to 
scale the dependent variables like
\begin{align*}
  \xi(t,\epsilon)&=\xi^* + \delta^2\tilde{\xi}(s,\delta)\\
  p(t,\epsilon)&=\delta^2\tilde{p}(s,\delta)\\
  b(t,\epsilon)&=\delta^2\tilde{b}(s,\delta)\\
  y(t,\epsilon)&=\delta^6\tilde{y}(s,\delta)\\
  v(t,\epsilon)&=\delta^4\tilde{v}(s,\delta)\\
  z(t,\epsilon)&=\delta^6\tilde{z}(s,\delta),
\end{align*}
where $\xi^*$ is the location of the $G$-spot and in what follows the
accent $\tilde{}$ 
will be exclusively used to represent these scaled inner variables. 
In the inner scale, the system becomes
\begin{align}
\tilde{\xi}^\prime &=F(\xi)+G(\xi)\lambda_N,\label{eq:specialX}\\
\tilde{p}^\prime &=\alpha_1(\xi),\label{eq:specialPP}\\
\tilde{b}^\prime&=\alpha_2(\xi)-\alpha_3(\xi)\lambda_N,\label{eq:specialB}\\
\tilde{y}^\prime&=\tilde{v},\label{eq:specialY}\\
\tilde{v}^\prime&=\tilde{b}+\tilde{p}\lambda_N,\label{eq:specialV}\\
\delta\tilde{z}^\prime&=-\tilde{z}-\lambda_N,\label{eq:specialZ}\\
\lambda_N&=\tilde{z}-\tilde{y},\label{eq:specialL}
\end{align}
with $^\prime=\frac{d}{ds}$. 

Let us couple the origin of the new time variable $s$ with the zero
value of $p$, by requiring that $p=0$ when $s=0$ for all $\delta$: 
\beq
\tilde{p}(0,\delta)=0.
\label{eq:pbc}
\eeq
Using the synchronisation condition \eqref{eq:genSysSynch} at $s=0$ we require for all $\delta$:
\begin{align}
  \pP\left(\xi^*+\delta^2\tilde{\xi}(0,\delta)\right)-\delta^2\tilde{p}(0,\delta)&=0 ,
\label{eq:specialInitialFirst}\\
  \bB\left(\xi^*+\delta^2\tilde{\xi}(0,\delta)\right)-\delta^2\tilde{b}(0,\delta)&=0, 
\label{eq:specialInitialSecond}\\
  \yY\left(\xi^*+\delta^2\tilde{\xi}(0,\delta)\right)-\delta^6\tilde{y}(0,\delta)&=0,  
\label{eq:specialInitialThird}\\
  \vV\left(\xi^*+\delta^2\tilde{\xi}(0,\delta)\right)-\delta^4\tilde{v}(0,\delta)&=0 . 
\label{eq:specialInitialFourth}
\end{align}
In addition, we will remove the $(m-4)$-dimensional freedom in the
location of the G-spot by imposing the additional boundary conditions
\eqref{eq:genSysGSpotExtra} on $\xi$ at $s=0$ for all $\delta$: 
\beq
\jJ(\xi^*+\delta^2\tilde{\xi}(0,\delta))=0,  
\label{eq:Jbc}
\eeq
where we still assume the non-degeneracy 
condition \eqref{eq:genSysGSpotNonSing}.

Note that the boundary conditions 
\eqref{eq:pbc}--\eqref{eq:Jbc} provide only
$m+1$ initial conditions to $m+5$ differential equations 
\eqref{eq:specialX}--\eqref{eq:specialZ}. Thus to find a
specific trajectory we have to specify four further conditions. 

\subsection{A distinguished trajectory}
\label{sec:distinguished}

The key step now is to establish that there is a unique distinguished
trajectory of the inner system of equations that 
plays the role of the explicit trivial solution of
the motivating example, that passes through the G-spot in the limit
$\eps \to 0$. This trajectory will have initial conditions that 
satisfy \eqref{eq:pbc}--\eqref{eq:Jbc}, which leaves four unspecified initial conditions.
Instead of four specific initial conditions, we will instead
require that $b(t,\eps)$ should be sufficiently smooth function that
it can be expressed as a regular power series in its
arguments up to arbitrary order. 

To motivate this requirement, 
we already know from the phase plane analysis of the singular 
system \eqref{eq:Gspot1}, \eqref{eq:Gspot2} in 
Fig.~\ref{fig:Gspot3} that when $\delta=0$ there are an open set
of initial conditions all of which pass through a particular
G-spot. Hence one of these initial conditions is essentially
required to fix a particular distinguished trajectory in the 
$(p,b)$-plane. Consider in particular case III, the
other two cases are somewhat more trivial. Looking at 
Fig.~\ref{fig:Gspot3} we note that all trajectories approach the $G$-spot
tangent to the weak stable eigenvector  
$\alpha_2 p = (\alpha_1-\alpha_3)b$. Then, according to recent results
in stable manifold theory (see \cite{Haller} and references therein),
of all the trajectories of the planar system 
\eqref{eq:Gspot1}-\eqref{eq:Gspot2},
there is a unique one whose graph $b(p)$ is smooth up to
order $C^{\beta+1}$ at $b=p=0$. 
Such a distinguished, maximally smooth trajectory is indicated by the 
dashed (red) line in Fig.~\ref{fig:Gspot3} in each of the three cases. 
The remaining three freedoms essentially arise by requiring that $y$, $z$, and $v$
are chosen so that there is additional smoothness in $t$ and $\eps$ so that
the trajectory in question does not blow up as $\eps \to 0$. 

We construct this maximally smooth trajectory as an asymptotic expansion. 
The procedure is a little involved, and 
makes use of special 
spaces of polynomials.  
Let $P_n$ be the polynomial space in $s$ spanned by 
$\{s^n,s^{n-3},s^{n-6},\ldots, s^q\}$, where $0\leq q= n$ $(\mbox{mod}\:  3)$. We shall also extend this definition by assuming that for $n<0$, $P_n$ consists
of the zero function. The result can be expressed as follows
\begin{theorem}
\label{th:1}
Let $\xi^*$ be a solution to equations
\eqref{eq:genSysGSpot}--\eqref{eq:genSysGSpotExtra} for which
the non-degeneracy condition \eqref{eq:genSysGSpotNonSing} holds.
Furthermore, let
$\alpha_1^*,\alpha_3^*<0$, evaluated at this $\xi^*$, be such that $\beta=\alpha_3^*/\alpha_1^*$ is not an 
integer. 
Then, 
there is a unique set of polynomial functions of $s$
$\xi_n(s) \in \Rset^m$ and $p_n(s),b_n(s),y_n(s),v_n(s),z_n(s),\lambda_n(s) 
\in \Rset$ for which
\begin{align*}
  \tilde{\bar{\xi}}(s,\delta)&=\left(\sum_{n=0}^{M-1}\xi_n(s)\delta^n\right)+\mathcal{O}(\delta^M),\\
  \tilde{\bar{p}}(s,\delta)&=\left(\sum_{n=0}^{M-1}p_n(s)\delta^n\right)+\mathcal{O}(\delta^M),\\
  \tilde{\bar{b}}(s,\delta)&=\left(\sum_{n=0}^{M-1}b_n(s)\delta^n\right)+\mathcal{O}(\delta^M),\\
  \tilde{\bar{y}}(s,\delta)&=\left(\sum_{n=0}^{M-1}y_n(s)\delta^n\right)+\mathcal{O}(\delta^M),\\
  \tilde{\bar{v}}(s,\delta)&=\left(\sum_{n=0}^{M-1}v_n(s)\delta^n\right)+\mathcal{O}(\delta^M),\\
  \tilde{\bar{z}}(s,\delta)&=\left(\sum_{n=0}^{M-1}z_n(s)\delta^n\right)+\mathcal{O}(\delta^M),\\
  \bar{\lambda}_N(s,\delta)&=\left(\sum_{n=0}^{M-1}\lambda_n(s)\delta^n\right)+\mathcal{O}(\delta^M),
\end{align*}
satisfy 
equations (\ref{eq:specialX}--\ref{eq:specialL}) 
up to order $\mathcal{O}(\delta^M)$ and equations
(\ref{eq:pbc}--\ref{eq:Jbc}) up to order $\mathcal{O}(\delta^{M+2})$.  More specifically, 
these functions belong to the spaces
$$
p_{2\nu}(s),b_{2\nu}(s),\xi_{2\nu}(s)\in P_{\nu+1}, \quad 
y_{2\nu}(s),z_{2\nu}(s)\in P_\nu, \quad 
v_{2\nu}(s)\in P_{\nu-1}, \quad
\lambda_{2\nu}(s)\in P_\nu
$$
for even powers $n=2\nu$ of $\delta$, and
$$
p_{2\nu+1}(s),b_{2\nu+1}(s),\xi_{2\nu+1}(s)\in P_{\nu-3}, \quad 
y_{2\nu+1}(s),z_{2\nu+1}(s)\in P_{\nu-1}, \quad 
v_{2\nu+1}(s)\in P_{\nu-2}, \quad
\lambda_{2\nu+1}(s)\in P_{\nu-4}
$$
for odd powers $n=2\nu+1$ of $\delta$.

\end{theorem}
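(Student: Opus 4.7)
The plan is to construct the expansions by induction on the order $n$ in $\delta$, substituting the formal series ansatz into the scaled system \eqref{eq:specialX}--\eqref{eq:specialL} and matching coefficients of $\delta^n$. Because the right-hand sides of \eqref{eq:specialX}--\eqref{eq:specialB} are smooth in $\xi$, Taylor expansion of $F$, $G$, $\alpha_1$, $\alpha_2$, $\alpha_3$ at $\xi^*$ shows that the equation collected at order $\delta^n$ is a \emph{linear} system for $(\xi_n,p_n,b_n,y_n,v_n,z_n,\lambda_n)$ whose inhomogeneity is a polynomial in $s$ built from the previously determined coefficients of orders $k<n$. The leading order ($n=0$) then has to reproduce the constant-coefficient reduced problem already analysed in the motivating example: $\lambda_0=\tilde z_0-\tilde y_0$ from \eqref{eq:specialL}, $\tilde z_0=-\lambda_0$ from \eqref{eq:specialZ} at $\delta=0$, $\tilde p_0=\alpha_1^* s$ by integrating \eqref{eq:specialPP}, and finally a scalar equation for $b_0$ identical to \eqref{eqn:de_inner} whose unique polynomial solution is the explicit linear function $b_0(s)=\alpha_2^*\alpha_1^* s/(\alpha_1^*-\alpha_3^*)$, matching the claim $b_0\in P_1$, $y_0,z_0\in P_0$, $v_0\in P_{-1}=\{0\}$.

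For the inductive step at general $n$, the key reduction is to eliminate the auxiliary variables. Equation \eqref{eq:specialL} gives $\lambda_n=z_n-y_n$; equation \eqref{eq:specialZ} at order $\delta^n$ gives $z_n=-\lambda_n-z_{n-1}'$ and hence determines $(y_n,z_n,\lambda_n)$ in terms of a single component together with derivatives of lower-order terms. Using \eqref{eq:specialY}--\eqref{eq:specialV} we express $v_n$ and $y_n$ in terms of derivatives of $b_n$ (and of already-known lower-order data), and then \eqref{eq:specialB} yields a scalar third-order linear ODE of the form
\begin{equation*}
L\, b_n(s)\;:=\;\alpha_3^* b_n \;-\;\alpha_1^* s\, b_n'\;-\;2\, b_n'''\;=\;f_n(s),
\end{equation*}
where $f_n$ is a polynomial in $s$ assembled from $b_k$ ($k<n$), the Taylor coefficients of $\alpha_i$ and of $F,G$ at $\xi^*$, and derivatives thereof. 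The $\xi$-equation \eqref{eq:specialX} then determines $\xi_n(s)$ by a single quadrature once $\xi_n(0)$ is fixed.

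The crucial spectral observation is that
\begin{equation*}
L(s^k)\;=\;(\alpha_3^*-k\alpha_1^*)s^k\;-\;2k(k-1)(k-2)s^{k-3}\;=\;\alpha_1^*(\beta-k)s^k\;-\;2k(k-1)(k-2)s^{k-3}.
\end{equation*}
Consequently $L$ preserves every subspace $P_m$, on which it is upper triangular with diagonal entries $\alpha_1^*(\beta-k)$ for the exponents $k\in\{m,m-3,\ldots\}$. Under the hypothesis that $\beta$ is not a non-negative integer these diagonal entries are all non-zero, so $L|_{P_m}$ is invertible. The polynomial membership claimed in the theorem is then proved by a parallel induction: multiplication by $s$ raises the grade by $1$, differentiation lowers it by $1$, multiplication by a scalar preserves it, while the $\delta\tilde z'$ term in \eqref{eq:specialZ} is what couples odd and even sub-ladders and accounts for the shift by three indices between the even grade $\nu+1$ and the odd grade $\nu-3$ in the statement. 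A careful but essentially mechanical check confirms that at each order $f_n$ lies in the $P_m$ predicted for $b_n$, so the unique polynomial preimage under $L|_{P_m}$ delivers $b_n$, and hence by the elimination above also $y_n, v_n, z_n, \lambda_n$, in the claimed polynomial classes.

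The remaining freedom at each order consists of four integration constants (three from the third-order ODE for $b_n$ and the initial value $\xi_n(0)$), which are pinned down by the $m+1$ boundary conditions \eqref{eq:pbc}--\eqref{eq:Jbc}. Expanding these in $\delta$ produces, at each order, a linear system for $\xi_n(0)$ whose matrix is exactly $[\pP_\xi,\bB_\xi,\yY_\xi,\vV_\xi,\jJ_\xi]$ evaluated at $\xi^*$; the non-degeneracy assumption \eqref{eq:genSysGSpotNonSing} guarantees this is invertible, yielding a unique solution. The main obstacle is the polynomial-space bookkeeping in the inductive step: one must verify that despite the intricate Taylor-expansion couplings, the forcing $f_n$ always lies in the $P_m$ predicted by the theorem, so that the solution returned by $L|_{P_m}^{-1}$ genuinely stays polynomial of the correct grade. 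The non-integer condition on $\beta$ is precisely what prevents resonance with the diagonal of $L$ and hence allows this inductive construction to proceed to all orders.
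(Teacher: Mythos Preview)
Your approach is essentially the paper's: induct on $n$, reduce at each order to the scalar third-order equation $Lb_n=f_n$ with $L=\alpha_3^*-\alpha_1^*s\,\partial_s-2\partial_s^3$, observe that $L$ is upper-triangular on each $P_m$ with nonzero diagonal when $\beta\notin\mathbb Z$, recover $y_n,v_n,z_n,\lambda_n$ algebraically, and integrate for $\xi_n$. The paper isolates the polynomial-grading step you call ``essentially mechanical'' as a separate lemma, tracking how Taylor expansion of $f(\xi^*+\delta^2\tilde\xi)$ distributes products of $\xi_k$'s across the $P_m$ classes; this is where the offset between the even grade $\nu+1$ and the odd grade $\nu-3$ actually gets verified, and it is the main combinatorial content of the proof rather than a routine check.

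There is, however, a genuine inconsistency in your final paragraph. Having correctly argued that $L|_{P_m}$ is invertible and therefore returns a \emph{unique} polynomial $b_n$, you then write that the remaining freedom consists of ``four integration constants (three from the third-order ODE for $b_n$ and the initial value $\xi_n(0)$)'' to be fixed by the $m+1$ boundary conditions. These two statements contradict each other: the polynomial requirement itself eliminates the three-parameter homogeneous family of $L$ (those solutions are generalised hypergeometric, not polynomial, when $\beta\notin\mathbb Z$), so no ODE constants survive to be fixed by the boundary data. The correct accounting is: one integration constant from the $p_n$-quadrature, fixed by $p_n(0)=0$; and the $m$-vector $\xi_n(0)$, fixed by the $m$ synchronisation/$\jJ$ conditions \eqref{eq:specialInitialFirst}--\eqref{eq:Jbc} taken at order $\delta^{n+2}$ (the shift by $2$ coming from the prefactor in $\xi=\xi^*+\delta^2\tilde\xi$). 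That gives $1+m$ free constants matched by $m+1$ scalar conditions, with the $b_n$ equation contributing none. Your dimensional count $3+1$ versus $m+1$ does not close, and the role you assign to the boundary conditions is not the one they actually play.
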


In what follows,  
for functions of $\xi$, like $F$, $\alpha_2$, or $\vV$, it is 
useful to introduce a notation for the coefficients in a $\delta$ expansion:
\begin{align}
f(\bar{\xi})=\sum_{k=0}^nf_k(s)\delta^k+\mathcal{O}(\delta^{n+1}).
\label{eq:f_kdef}
\end{align}
We also define $f_k(s)=0$ for all $k<0$.
Note that the use of the index 
$k$ in $f_k(s)$ is equivalent to that used for the {\em scaled} variables
like $\tilde{\bar{\xi}}$, $\tilde{\bar{p}}$, or $\tilde{\bar{v}}$. 
If it were to be
applied to the unscaled variables, the index would be different.
For example $v_k(s)$ is the
coefficient of $\delta^k$ in an expansion of $\tilde{v}$, but the
coefficient of $\delta^{k+4}$ in an expansion of $v$ itself, whereas $f_k$ is
always the coefficient of $\delta^k$ for a function $f(\bar{\xi})$. 

We begin by stating a useful result:

\begin{lemma}\label{thm:expansion}
  Assume $f(\xi)$ is a $C^n$ function. Then $f_n(s)$ only depends on
  $\xi_k(s)$ for $0\leq k\leq n-2$, and $\xi_{n-2}$ enters linearly
  with coefficient $f_\xi(\xi^*)$.
  
  Assume further that $\xi_{2k}(s)\in P_{k+1}$, $\xi_{2k+1}(s)\in
  P_{k-3}$. Then if $n=2\nu$ then $f_{n}(s)\in P_{\nu}$, and if $n=2\nu+1$
  then  $f_{n}(s)\in P_{\nu-4}$
\end{lemma}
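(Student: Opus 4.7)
The plan is to combine a Taylor expansion of $f$ around $\xi^*$ with elementary closure properties of the graded spaces $P_n$.

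First I would write $f(\bar\xi)=\sum_{m\ge 0}\tfrac{1}{m!}D^mf(\xi^*)[\bar\xi-\xi^*]^m$ and substitute the scaling $\bar\xi-\xi^*=\sum_{k\ge 0}\delta^{k+2}\xi_k(s)$. Reading off the coefficient of $\delta^n$ yields a finite sum of multilinear terms
\[
\tfrac{1}{m!}\,D^mf(\xi^*)\bigl[\xi_{k_1}(s),\dots,\xi_{k_m}(s)\bigr],\qquad k_i\ge 0,\ \ \sum_{i=1}^m(k_i+2)=n,
\]
and the $C^n$ hypothesis is exactly what is needed to access the tensors $D^mf(\xi^*)$ that can appear. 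Since each $k_i\ge 0$, every $k_i\le n-2$, and the only way to attain $k_j=n-2$ is $m=1$, producing the single linear contribution $f_\xi(\xi^*)\,\xi_{n-2}$. All other terms involve indices $k_i\le n-4$. This already settles the first assertion.

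For the graded statement I would first record three closure properties: (i) $P_n\cdot P_\ell\subseteq P_{n+\ell}$, because the allowed exponents form arithmetic progressions of common difference $3$; (ii) $P_{n-3k}\subseteq P_n$ for every integer $k\ge 0$; and (iii) $P_n=\{0\}$ for $n<0$. I would then analyse a typical term by letting $\ell$ be the number of odd indices among $(k_1,\dots,k_m)$. Writing even $k_i$ as $2\mu_i$ and odd $k_i$ as $2\mu_i+1$, the parity of $\sum k_i=n-2m$ forces $\ell\equiv n\pmod 2$, and the hypotheses $\xi_{2\mu}\in P_{\mu+1}$, $\xi_{2\mu+1}\in P_{\mu-3}$ together with (i) place the product in $P_N$ with
\[
N=\sum_i\mu_i+(m-\ell)-3\ell=\frac{n-9\ell}{2},
\]
after using $\sum 2\mu_i=n-2m-\ell$. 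For $n=2\nu$ the minimal admissible $\ell$ is $0$, giving $N=\nu$; any other admissible $\ell$ is even and strictly larger, so $N$ drops in steps of $9$, and by (ii) each such term still sits in $P_\nu$. For $n=2\nu+1$ the minimal admissible $\ell$ is $1$, giving $N=\nu-4$, and the same monotonicity argument places every remaining term in $P_{\nu-4}$.

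The one step I would check most carefully is the parity and divisibility bookkeeping in the formula $N=(n-9\ell)/2$: one needs $\ell$ to have the correct parity for $N$ to be an integer, and one needs the decrements of $N$ as $\ell$ grows to be multiples of $3$ so that (ii) actually applies. Both facts are structural consequences of the factor $\delta^2$ in front of $\tilde{\bar\xi}$ in the inner scaling, which is what makes the index shift $k\mapsto k+2$ compatible with the $3$-periodic grading of $P_n$. The rest is a book-keeping exercise that is uniform in $m$ and in the particular derivative tensor of $f$, so the conclusion transfers to the full coefficient of $\delta^n$.
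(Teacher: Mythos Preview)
Your proof is correct and follows essentially the same approach as the paper's: both arguments Taylor-expand $f$ at $\xi^*$, identify the $\delta^n$ coefficient as a sum over multi-indices with $\sum(k_i+2)=n$, invoke the multiplicativity $P_k\cdot P_\ell\subseteq P_{k+\ell}$, and then track how many of the $k_i$ are odd. Your parametrisation by $\ell$ and the closed formula $N=(n-9\ell)/2$ is a slightly more compact packaging of what the paper does case by case (all $k_i$ even, then two odd, etc.), but the content is identical.
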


\begin{proof}
  The first part is immediate through Taylor expansion in $\delta$.

For the second part, note to begin with that the product of a polynomial in
  $P_k$ and one in $P_l$ is in $P_{k+l}$. Furthermore, 
note that $f_n(s)$ is a sum of products, each product being a
product of a constant and some $\xi_{k_i}(s)$, where
$n=\sum_i(k_i+2)$. We consider the cases $n$ even and odd separately. 

First consider the case of even $n$, specifically $n=2\nu$. If all
$k_i$ are even so $k_i=2\kappa_i$, then the term is a product of a
constant and some $\xi_{2\kappa_i}(s)$ each of which is in
$P_{\kappa_i+1}$ and thus the term is in $P_{\sum_i(\kappa_i+1)}$. But
$2\nu=\sum_i(2\kappa_i+2)=2\sum_i(\kappa_i+1)$ so the term is in
$P_\nu$. If instead two of the $k_i$, say $k_1=2\kappa_1+1$ and
$k_2=2\kappa_2+1$, are odd and the rest even: $k_i=2\kappa_i$ for
$i>2$, then the term is in $P_{\kappa_1-3+\kappa_2-3+\sum_{i>2}(\kappa_i+1)}$, but
$2\nu=2\kappa_1+3+2\kappa_2+3+\sum_{i>2}(2\kappa_i+2)=2(\kappa_1-3+\kappa_2-3+\sum_{i>2}(\kappa_i+1))+18$ so the term is in $P_{\nu-9}$ which 
is included in $P_{\nu}$. In the
same way, each time there are two new odd $k_i$, the resulting term
order is lowered by 9.

Second, consider the case of odd $n$, specifically $n=2\nu+1$. If
there is only one odd $k_i$, say $k_1=2\kappa_1+1$ and the rest even
$k_i=2\kappa_i$ for $i>1$, then the term is in
$P_{\kappa_1-3+\sum_{i>1}(\kappa_i+1)}$, but
$2\nu+1=2\kappa_1+3+\sum_{i>1}(2\kappa_i+2)=
2(\kappa_1-3+\sum_{i>1}(\kappa_i+1))+9$ so the term is in
$P_{\nu-4}$. Again, each time two more $k_i$ are odd, the term order
is lowered by 9, and is included in $P_{\nu-4}$.

\end{proof}

\begin{proof}[Proof of Theorem \ref{th:1}]
To establish the expansion, we set up an iteration scheme to compute
the solution at order $n$ in terms of the solutions at orders less
than $n$. The iteration scheme works as follows. Let $n\geq 0$. If $n>0$ then suppose that solutions for 
$p_k$, $b_k$, $y_k$, $v_k$, $z_k$, $\lambda_k$ and $\xi_k$ have been computed for all 
$0\leq k\leq n-1$ and they belong to the appropriate polynomial spaces as specified by the theorem. 
We then find a solution at $\mathcal{O}(\delta^n)$ through the following steps.
\begin{enumerate}
\item Consider the order $\delta^n$ term of both the differential
  equation \eqref{eq:specialPP} and
  the initial condition 
  \eqref{eq:pbc}.
  This gives
  \begin{equation}
    p_n^\prime={\alpha_1}_n
    \label{eq:p_nprime}
  \end{equation}
  and $p_n(0)=0$, where
owing to Lemma \ref{thm:expansion}, 
the right-hand side is a known polynomial of $s$ in $P_{\nu}$ if $n=2\nu$
or $P_{\nu-4}$ if $n=2\nu+1$. Integrating \eqref{eq:p_nprime} yields
a unique $p_{2\nu} \in P_{\nu+1}$ or $p_{2\nu+1}\in P_{\nu-3}$. For example, we get $p_0=\alpha_1^*s$ for all systems.

\item Consider the term of order $\delta^n$ in \eqref{eq:specialB},
  which can be written
  \begin{equation}
    b_n^\prime+\alpha_3^*\lambda_n
={\alpha_2}_n-\sum_{k=0}^{n-1}{\alpha_3}_{n-k}\lambda_k  
:= =r_{b,n}(s),
\label{eq:specialBn}
  \end{equation}
where the right-hand side $r_{b,n}(s)$ is a known function.
Using Lemma \ref{thm:expansion} and the known polynomial form of
$\lambda_k$, we find $r_{b,2\nu}\in P_\nu$ and  $r_{b,2\nu+1}\in P_{\nu-4}$.
  
Similarly we can write the order $\delta^n$ term of \eqref{eq:specialY} as
  \begin{equation}
    y_n^\prime-v_n=0.\label{eq:specialYn}
  \end{equation}
The order $\delta^n$ term of \eqref{eq:specialV} can be written
  \begin{equation}
    v_n^\prime-b_n-\alpha_1^*s\lambda_n
=\sum_{k=0}^{n-1}p_{n-k}\lambda_k  : =r_{v,n}(s). \label{eq:specialVn}
  \end{equation}
Here we have used $p_0=\alpha_1^*s$ from the very first step, and
note that we need $p_n$ from step 1.
The known right hand side now found to be $r_{v,2\nu}(s)\in P_{\nu+1}$
or $r_{v,2\nu+1}(s)\in P_{\nu-3}$.

The order $\delta^n$ term of \eqref{eq:specialZ} and \eqref{eq:specialL}
can be written
   \begin{equation}
    -z_n-\lambda_n=z_{n-1}^\prime\label{eq:specialZn}
   \end{equation}
and
   \begin{equation}
    \lambda_n-z_n+y_n=0\label{eq:specialLn}.
   \end{equation}
Note that \eqref{eq:specialZn} remains true if $n=0$ since we 
have defined $z_{-1}(s)=0$. 
   
So far we have obtained a system of four coupled ODEs 
(\ref{eq:specialBn}--\ref{eq:specialZn}) and one algebraic equation
\eqref{eq:specialLn} for the unknowns 
$b_n$, $\lambda_n$, $y_n$, $z_n$ and $v_n$. Next, we eliminate four of these variables one by one. First, differentiate \eqref{eq:specialYn}, insert the result
into \eqref{eq:specialVn}, multiply the resulting equation by
$\alpha_3^*$ and finally eliminate $\lambda_n$ using
\eqref{eq:specialBn} to get
   \begin{equation}
     \alpha_3^*y_n^{\prime\prime}-\alpha_3^*b_n+\alpha_1^*s
     b_n^\prime=\alpha_1^*sr_{b,n}+\alpha_3^*r_{v,n}.\label{eq:specialYnbis}
   \end{equation}

Next, we can eliminate $z_n$ and $\lambda_n$ from
   \eqref{eq:specialBn}, \eqref{eq:specialZn}, and
   \eqref{eq:specialLn} to get
  \begin{equation*}
    \alpha_3^*y_n-2 b_n^\prime=-\alpha_3^*z_{n-1}^\prime-2r_{b,n}.
  \end{equation*}
Differentiating twice and using the result to eliminate
$y_n^{\prime\prime}$ from \eqref{eq:specialYnbis} finally gives us
    \begin{equation}
    2b_n^{\prime\prime\prime}+\alpha_1^*sb_n^\prime-\alpha_3^*b_n
=2r_{b,n}^{\prime\prime}+\alpha_3^*z_{n-1}^{\prime\prime\prime}+\alpha_1^*sr_{b,n}+\alpha_3^*r_{v,n}:=r_n(s).\label{eq:third-order_b}
  \end{equation}
The polynomial order for the known right-hand side can now be found
to be $r_{2\nu}\in P_{\nu+1}$ or $r_{2\nu+1}\in P_{\nu-3}$.

Now, note that \eqref{eq:third-order_b} is a linear inhomogeneous equation.
The solution is in general composed of a complementary function 
plus a particular solution. But we know by Theorem \ref{th:3} (see 
Appendix \ref{A:1}) 
that if $\beta$ is not an integer, the complementary function 
is a linear combination of generalised hypergeometric functions 
in the rescaled variables $s$, $\delta$, which does not
satisfy the required smoothness assumptions. Therefore, we must take the
particular solution only.

Substituting a monomial $s^k$ for $b_n$ into the left-hand side of
\eqref{eq:third-order_b} gives
\begin{equation*}
  (k\alpha_1^*-\alpha_3^*)s^k+k(k-1)(k-2)s^{k-3}
\end{equation*}
Since we have assumed $\beta=\alpha_3^*/\alpha_1^*$ is not an integer, the
coefficient of $s^k$ is non-zero. This means we can make an ansatz
$b_{2\nu}\in P_{\nu+1}$ or $b_{2\nu+1}\in P_{\nu-3}$ and find its coefficients one by one starting with
the highest order. 

Thus there is a unique particular integral solution with 
$b_{2\nu}\in P_{\nu+1}$ or $b_{2\nu+1}\in P_{\nu-3}$.

\item Having found $b_n$, we can recover $y_n$, $v_n$, $z_n$ and
  $\lambda_n$ from
  \begin{align*}
    \alpha_3^*y_n&=2(b_n^\prime-r_{b,n})-\alpha_3^*z_{n-1}^\prime,\\
    \alpha_3^*v_n&=2(b_n^{\prime\prime}-r_{b,n}^\prime)-\alpha_3^*z_{n-1}^{\prime\prime},\\
    \alpha_3^*\lambda_n&=r_{b,n}-b_n^\prime,\\
    \alpha_3^*z_n&=b_n^\prime-r_{b,n}-\alpha_3^*z_{n-1}^\prime.
  \end{align*}
By studying the right-hand sides for even and odd $n$, we can verify that
$y_n$, $v_n$ and $z_n$ are in the correct polynomial spaces.

\item Finally, consider the order $\delta^n$ term in the
  differential equation \eqref{eq:specialX}, which gives
  \begin{equation}
    \xi_n^\prime=r_{\xi,n}(s)=F_n+\sum_{k=0}^{n}G_{n-k}\lambda_n,
    \label{eq:xinprime}
  \end{equation}
where $r_{\xi,2\nu}\in
  P_{\nu}$ or  $r_{\xi,2\nu+1}\in P_{\nu-4}$. 
Note we need $\lambda_n$ from step 3 here. 
We obtain an explicit expression for $\xi_n$ by 
integrating both sides of \eqref{eq:xinprime}. The integration constants 
will be eliminated with the help of the order $\delta^{n+2}$
terms in the $m$-dimensional
initial conditions \eqref{eq:specialInitialFirst}--\eqref{eq:Jbc}:
  \begin{equation}
    [\pP_{n+2}(0)-p_n(0),\bB_{n+2}(0)-b_n(0),\yY_{n+2}(0)-y_{n-4}(0),\vV_{n+2}(0)-v_{n-2}(0),\jJ_{n+2}(0)]=[0,0,0,0,0]
    \label{eq_initialcond_xi}
  \end{equation}
According to Lemma \ref{thm:expansion}, $\pP_{n+2}(0)$, $\bB_{n+2}(0)$, $\yY_{n+2}(0)$ and $\vV_{n+2}(0)$ depend only on $\xi_k(0)$ for $0\leq k\leq n$, furthermore \eqref{eq_initialcond_xi} can be rearranged to read
\begin{equation}
    [\pP_\xi(\xi^*),\bB_\xi(\xi^*),\yY_\xi(\xi^*),\vV_\xi(\xi^*),\jJ_\xi(\xi^*)]\xi_n(0)=r_{\xi_0,n},
\label{eq:xi_iteraction}
  \end{equation}  
   where the left-hand side is a linear in
$\xi_n(0)$ (see Lemma \ref{thm:expansion}). 
The right-hand side $r_{\xi_0,n}$ is then an $m$-vector, each component of
which contains a a sum of two types of terms: (i) constants times the products of 
lower-order terms $\xi_k(0)$ ($k<n$) and; (ii) terms that involve 
$p_{n}(0)$, $b_{n}(0)$, $y_{n-4}(0)$, $v_{n-2}(0)$. 
  
If we treat $s$ as a free variable in the terms of type (i) 
(instead of having $s=0$), then each of them belongs to the polynomial class 
$P_{\nu+1}$ if $n=2\nu$ or $P_\nu-3$ if $n=2\nu+1$. 
This result can be proven in the same way as the second 
statement of Lemma \ref{thm:expansion}, which relies 
on the known polynomial class of $\xi_k$ for $k<n$. It follows that the 
polynomials (i) do not include zeroth-order terms and thus their values 
for $s=0$ are 0, unless $\nu\pmod{3}=2$ 
and $n=2\nu$ or $\nu\pmod{3}=0$ and $n=2\nu+1$.

The functions $p_n$, $b_n$, $y_{n-4}$ and $v_{n-2}$ appearing in terms
of type (ii) also belong to special polynomial classes as specified by
the statement of the theorem, and as verified in previous steps of the
iteration scheme.  It follows that the constant terms of these
polynomials must vanish, and thus their values for $s=0$ are 0 for the
exact same values of $n$ where the terms of type (i) also vanish.
  
Hence, we have found that $r_{\xi_0,2\nu}$ are all
zero unless  $\nu\pmod{3}=2$ and $r_{\xi_0,2\nu+1}$ are all
zero unless  $\nu\pmod{3}=0$. At the same time, the system matrix on
the left-hand side of \eqref{eq:xi_iteraction} is non-singular 
by the assumption of the theorem. This implies
$\xi_{2\nu}(0)$ or $\xi_{2\nu+1}(0)$ is well defined, and is zero
unless 
$\nu\pmod{3}=2$ or $\nu\pmod{3}=0$, respectively. 
Hence we have found the auxiliary conditions for \eqref{eq:xinprime}
and we can conclude that the integration of \eqref{eq:xinprime} yields
a unique $\xi_{2\nu}\in P_{\nu+1}$ or $\xi_{2\nu+1}\in P_{\nu-3}$. It
is worth noting that for all values of $n$ for which the polynomial
class $P_{\nu+1}$ (even $n$) or $P_{\nu-3}$ (odd $n$) does not include
constant functions, the previously described procedure obtains the
initial condition $\xi_{n}(0)=0$, thus eliminating the integration constant.
\end{enumerate}
\end{proof}

\begin{corollary}
The polynomial classes established by 
Theorem \ref{th:1} imply that each of the unscaled variables 
$\xi$, $p$, $b$, $y$ and $v$ truncated to any finite
order in $\delta$ can be written as a polynomial in $t,\epsilon$.
Hence we can express the distinguished smooth trajectory as a regular 
asymptotic expansion in $\eps$. 
\end{corollary}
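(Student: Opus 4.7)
The plan is to unwind the inner rescaling and check that the carefully chosen polynomial classes $P_n$ in Theorem \ref{th:1} are designed precisely so that every apparent fractional power of $\epsilon$ cancels. The scaling is $t=\delta^2 s$ with $\delta=\epsilon^{1/3}$, and the unscaled variables are $\xi-\xi^*=\delta^2\tilde\xi$, $p=\delta^2\tilde p$, $b=\delta^2\tilde b$, $v=\delta^4\tilde v$, and $y,z=\delta^6\tilde y, \delta^6\tilde z$. So the only thing to verify is that upon substituting $s=t\delta^{-2}$ into each $b_n(s), p_n(s), \ldots$ from the expansion in Theorem \ref{th:1}, the combined powers of $\delta$ collapse to multiples of $3$, giving integer powers of $\epsilon$.

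First I would do the bookkeeping explicitly for one variable, say $b$. A monomial $s^k$ in $b_n(s)$ contributes $\delta^2 \cdot s^k\delta^n = t^k\delta^{n+2-2k}$ to the unscaled $b(t,\epsilon)$. Theorem \ref{th:1} tells us that for $n=2\nu$ the only admissible $k$ satisfy $k\le \nu+1$ and $k\equiv \nu+1\pmod 3$, giving $n+2-2k=2(\nu+1-k)$, a non-negative integer that is divisible by $3$. Hence $\delta^{n+2-2k}=\epsilon^{2(\nu+1-k)/3}$ is a non-negative integer power of $\epsilon$. For $n=2\nu+1$, $b_n\in P_{\nu-3}$ forces $k\le \nu-3$ with $k\equiv \nu\pmod 3$, giving $n+2-2k=2\nu+3-2k$, again a non-negative multiple of $3$. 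So every term of $b(t,\epsilon)$ truncated to $\mathcal{O}(\delta^M)$ is of the form (constant)$\cdot t^k\epsilon^j$ with $k,j\in\mathbb{Z}_{\ge 0}$.

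Then I would observe that exactly the same arithmetic works for the remaining variables because the prefactor exponent and the polynomial class match in every case. For $p$ and for each scalar component of $\xi$ the prefactor is $\delta^2$ and the classes are $P_{\nu+1}$ (even $n$) and $P_{\nu-3}$ (odd $n$), identical to $b$. For $y$ and $z$ the prefactor is $\delta^6$ and the classes are $P_\nu$ (even) and $P_{\nu-1}$ (odd); checking $n=2\nu$ gives $6+n-2k=2(\nu+3-k)$ with $k\equiv \nu\pmod 3$, again a multiple of $3$, and the odd case is analogous. For $v$, prefactor $\delta^4$ combines with $P_{\nu-1}$ (even) and $P_{\nu-2}$ (odd) to the same effect. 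In each case the degree/modulo offsets are tuned so that $n+({\rm prefactor exponent})-2k$ is a non-negative multiple of $3$.

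Finally, regrouping the terms by the integer power of $\epsilon$ yields a representation
\[
b(t,\epsilon)=\sum_{j=0}^{J}B_j(t)\,\epsilon^j+\mathcal{O}(\epsilon^{J+1}),
\]
with each $B_j(t)$ a polynomial in $t$, and analogous expansions for $\xi,p,y,v,z,\lambda_N$. This is the desired regular asymptotic expansion of the distinguished trajectory in $\epsilon$ with polynomial-in-$t$ coefficients. There is no real obstacle beyond the modular arithmetic bookkeeping, since the content of the corollary is entirely encoded in the polynomial classes produced by Theorem \ref{th:1}; the slightly finicky part is making sure the different prefactor exponents $(2,2,2,4,6,6)$ for $(\xi,p,b,v,y,z)$ are compatible with their respective classes, but this is a direct check in each case.
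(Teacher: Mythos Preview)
Your proposal is correct and follows essentially the same approach as the paper: both arguments unwind the scaling $t=\delta^2 s$, $\delta=\epsilon^{1/3}$ and verify that the polynomial classes $P_n$ from Theorem~\ref{th:1} force every exponent of $\delta$ appearing in the unscaled variables to be a non-negative multiple of $3$. The paper carries this out only for $p$, reindexing the sums explicitly, whereas you phrase it as a direct modular check on the exponent $(\text{prefactor})+n-2k$ and verify the compatibility for each of $b,p,\xi,y,z,v$ separately; this is slightly more complete but not materially different.
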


\begin{proof}
We just demonstrate that the statement is true for $p$. 
The construction for the other variables 
is similar. 
Note  that 
the formula for 
$\tilde{\bar{p}}(s,\delta)$ in the theorem
consists of a sum of terms like
$$
 \underbrace{p_{2\nu}(s)}_{\in P_{\nu+1}}
 \delta^{2\nu} \;  \text{ and } \;
 \underbrace{p_{2\nu+1}(s)}_{\in P_{\nu-3}}
 \delta^{2\nu+1} 
$$
from which the rescaled version of this variable is a sum of terms like
$$
 \underbrace{p_{2\nu}(s)}_{\in P_{\nu+1}}
 \delta^{2\nu+2} \;  \text{ and } \;
 \underbrace{p_{2\nu+1}(s)}_{\in P_{\nu-3}}
 \delta^{2\nu+3} 
$$
or equivalently
$$
\sum_{\rho=\nu+1,\nu-2,\nu-5,...}
 Ks^\rho\delta^{2\nu+2}
 \;\text{ and }\;
\sum_{\rho=\nu-3,\nu-6,\nu-9,...}
   K s^\rho
 \delta^{2\nu+3}
$$
where $K$ represents any unspecified constant.
Replacing $s$ by $t$ and $\delta$ by $\eps$, these two terms become
\begin{align*}
\sum_{\rho=\nu+1,\nu-2,...}  K
t^\rho
 \delta^{2\nu-2\rho+2} 
 =
 \sum_{0\leq\sigma\leq(\nu+1)/3}
 K \delta^{6\sigma}t^{\nu+1-3\sigma}
 =
 \sum_{0\leq\sigma\leq(\nu+1)/3}
 K \eps^{2\sigma}t^{\nu+1-3\sigma}
\end{align*}
and
\begin{align*}
\sum_{\rho=\nu-3,\nu-6,...}
   K t^\rho
 \delta^{2\nu-2\rho+3}
 =
 \sum_{0\leq\sigma\leq(\nu-3)/3}
   K \delta^{9+6\sigma}t^{\nu-3-3\sigma}
   =
 \sum_{0\leq\sigma\leq(\nu-3)/3}
   K \eps^{3+2\sigma}t^{\nu-3-3\sigma}  
\end{align*}
respectively, which are regular polynomials in $\eps,t$.
\end{proof}

\paragraph*{Example.} For the extended example system 
with $\alpha_1^*=\alpha_2^*=-1$ and
$\alpha_3^*=-3/2$, we find
\begin{align*}
\tilde{\bar{p}}(s,\delta) = & 
-s+{s}^{2}\chi\,{\delta}^{2}+2\,{s}^{3}{\chi}^{2}{\delta}^{4}+
 \left( 3\,{s}^{4}{\chi}^{3}-96\,s{\chi}^{3} \right) {\delta}^{6}
+\mathcal{O}(\delta^{8}) 
\\
\tilde{\bar{b}}(s,\delta)= & 
2\,s+6\,\chi\,{s}^{2}{\delta}^{2}+ \left( 12\,{\chi}^{2}{s}^{3}-96\,{
\chi}^{2} \right) {\delta}^{4}+ \left( {\frac {138\,{\chi}^{3}{s}^{4}
}{5}}-{\frac {10368\,s{\chi}^{3}}{5}} \right) {\delta}^{6}
+\mathcal{O} \left( {\delta}^{8} \right) 
\\
\tilde{\bar{y}}(s,\delta)= &
-4-16\,\chi\,s{\delta}^{2}+8\,\chi\,{\delta}^{3}-48\,{\chi}^{2}{s}^{2
}{\delta}^{4}+48\,s{\chi}^{2}{\delta}^{5}+ \left( -{\frac {736\,{\chi}
^{3}{s}^{3}}{5}}+{\frac {13824\,{\chi}^{3}}{5}}-48\,{\chi}^{2}
 \right) {\delta}^{6}
+\mathcal{O}\left( {\delta}^{7} \right)\\
\tilde{\bar{v}}(s,\delta)= & 
-16\,\chi\,{\delta}^{2}-96\,s{\chi}^{2}{\delta}^{4}+48\,{\chi}^{2}{
\delta}^{5}-{\frac {2208\,{s}^{2}{\chi}^{3}}{5}}{\delta}^{6}
+\mathcal{O}\left( {\delta}^{7} \right)
\\
\tilde{\bar{z}}(s,\delta)= &
-2-8\,s\chi\,{\delta}^{2}+8\,\chi\,{\delta}^{3}-24\,{s}^{2}{\chi}^{2}
{\delta}^{4}+48\,s{\chi}^{2}{\delta}^{5}+ \left( -{\frac {368\,{s}^{3}
{\chi}^{3}}{5}}+{\frac {6912\,{\chi}^{3}}{5}}-48\,{\chi}^{2} \right) {
\delta}^{6}
+\mathcal{O}\left( {\delta}^{7} \right)
\end{align*}
\begin{figure}
\begin{center}
\includegraphics[width=0.5\textwidth]{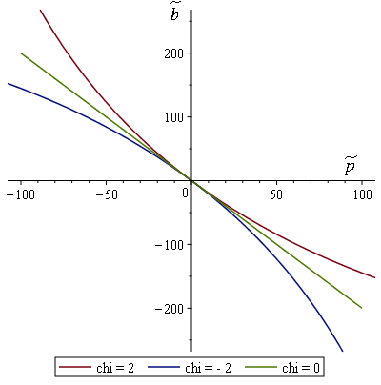}
\end{center}
\caption{Comparison between expansions for the distinguished trajectories
for $\eps=10^{-5}$.}
\label{fig:PainlevePeter}
\end{figure} 
Note that by construction, setting $\chi=0$ reconstructs the trivial
solution \eqref{eq:modelSysDistP}--\eqref{eq:modelSysDistV}. Figure \ref{fig:PainlevePeter} compares the solutions in the $(b,p)$-plane for different values of
$\chi$. 

To demonstrate that each of the above expressions implies that the
corresponding unscaled variable is a polynomial in $t$
and $\eps$, consider for example the expansion for $\bar{z}=\delta^6\tilde{\bar{z}}$
in the case $\chi=1$
under the substitution $s = \eps^{-2/3} t$ and $\delta=\eps^{1/3}$.
We have
\begin{align*}
\bar{z}(t,\eps)=\delta^6\tilde{\bar{z}}(s,\delta)= &
\delta^6\left(-2-8s{\delta}^{2}+8 {\delta}^{3}-24\,{s}^{2}
{\delta}^{4}+48s{\delta}^{5}+ 
\left( -\frac {368\,{s}^{3}}{5}+\frac{6672}{5} 
\right) {\delta}^{6}
+\mathcal{O}\left( {\delta}^{7} \right)\right)\\
= & \eps^2\left(-2 - 8t + 8 \eps - 24t^2 + 48 t \eps + 
\left( -\frac {368\,{t}^{3}}{5 \eps^2}+\frac{6672}{5} 
\right) \epsilon^2  +\mathcal{O}\left( {\delta}^{7} \right)\right)\\
= & \eps^2\left(-2 - 8 t - 24t^2 - \frac{368}{5} t^3 + (8 + 48 t)\eps 
+ \frac{6672}{5} \eps^2 + \mathcal{O}( t^4,\eps t^2, \eps^2 t, \eps^3)\right) 
\end{align*} 

\bigskip

Note that the distinguished trajectory exists for both $t<0$ and $t>0$ and
so can correspond to a canard solution that passes between 
the critical (slow) manifolds for $p<0$ and $p>0$. This solution can they play
the role of the separatrix in the inner system 
that separates trajectories that lift off
from those that take an IWC.  To see whether this is the case,
we have to consider other trajectories that are 
in the critical manifold for $p<0$. In order to do this we need to look
at the outer scale and consider the asymptotic behaviour as $p\to 0$ of
solutions in the slow manifold.   

\subsection{Fast-slow analysis of the outer system}

Consider the general system
\eqref{eq:genSysXi}--\eqref{eq:genSysLN}. 
Letting $\eps^2y_o=y$, $\eps^2z_o=z$, $\eps v_o=v$ gives
\begin{align*}
  \dot{\xi}&=F(\xi)+G(\xi)\lambda_N,\\
  \dot{p}&=\alpha_1(\xi),\\
  \dot{b}&=\alpha_2(\xi)-\alpha_3(\xi)\lambda_N,\\
  \eps\dot{y}_o&=v_o,\\
  \eps\dot{v}_o&=b+p\lambda_N,\\
  \eps\dot{z}_o&=-z_o-\lambda_N,\\
  \lambda_N &= z_o-y_o.
\end{align*}
Note that this is a fast-slow system.
 
\paragraph{The fast system} 
is obtained by letting $\dot{\xi}=\dot{p}=\dot{b}=0$,
in which case $\xi$, $p$ and $b$ are constant and we are left with a linear
system for the remaining three variables 
$$
\eps (\dot{y}_o,\dot{v}_o,\dot{z}_o)^T  
= M (y_o,v_o,z_o)^T,   
$$
where
$$
\mathbf{M}=
\left[\begin{array}{ccc}
0&1&0\\
-p&0&p\\
1&0&-2
\end{array}\right]
$$ 
The characteristic polynomial of $M$ is
$$
\lambda^3+2\lambda^2+p\lambda+p=0,
$$ 
which is the same as that
of the fast outer system in the motivating example, with the same
conclusions regarding stability.  Specifically, for $p>0$ trajectories
are attracted to a codimension three manifold, representing the slow
dynamics, whereas for $p<0$ the slow dynamics is normally hyperbolic
with a two-dimensional stable manifold and one-dimensional unstable
manifold.

\paragraph{The slow dynamics} for $\eps=0$ occur on the slow manifold 
\begin{align*}
  y_o&=2b/p,\\
  v_o&=0,\\
  z_o&=b/p,
\end{align*}
whose dynamics are given by the slow subsystem
\begin{align*}
  \dot{\xi}&=F(\xi)-G(\xi)b/p,\\
  \dot{p}&=\alpha_1(\xi),\\
  \dot{b}&=\alpha_2(\xi)+\alpha_3(\xi)b/p.
\end{align*}

Now, according to Fenichel theory (see \cite{Kuhn}), for all $p$ bounded
away from zero (where the slow manifold is normally-hyperbolic) then there
exist a {\em critical manifolds} which are $O(\eps)$ close to the slow
manifold for $p>0$ and $p<0$, are smooth and inherent the stability properties
of the slow manifold in each case. 

In order to understand the limit as $p\to 0$ of
the dynamics in the slow subsystem, it is useful to rewrite it in the form 
\begin{align*}
  \dot{\xi}-\left(F(\xi)+G(\xi)\lambda_N\right)&=0,\\
  \dot{p}-\alpha_1(\xi)&=0,\\
  \dot{b}-\left(\alpha_2(\xi)-\alpha_3(\xi)\lambda_N\right)&=0,\\
  -\left(b+p\lambda_N\right)&=0.
\end{align*}
We also write slow variables as deviations from the distinguished
trajectory 
\begin{align}
  \xi&=\bar{\xi}(t,0)+\hat{\xi}(t),\\
  p&=\bar{p}(t,0)+\hat{p}(t),\\
  b&=\bar{b}(t,0)+\hat{b}(t),\\
  \lambda_N&=\bar{\lambda}_N(t,0)+\hat{\lambda}_N(t).
\end{align}
Motivatived by the example system in \ref{sec:4}, we seek 
an ansatz of the form
\begin{align}
  \hat{\xi}(t)&=\xi_0(-t)^r+o((-t)^{r}),\\
  \hat{p}(t)&=p_0(-t)^{r+1}+o((-t)^{r+1}),\\
  \hat{b}(t)&=b_0(-t)^r+o((-t)^{r})\\
  \hat{\lambda}_N(t)&=\lambda_0(-t)^{r-1}+o((-t)^{r-1}),
\end{align}
for an unknown exponent $r>0$, and using $\bar{p}(t,0)=\alpha_1^*t+\cdots$,
we find to leading order that 
\begin{align}
  \left[-r\xi_0-G^*\lambda_0\right](-t)^{r-1}&=o((-t)^{r-1})\\
  \left[-(r+1)p_0-{\alpha_1}_\xi^*\xi_0\right](-t)^{r}&=o((-t)^{r})\\
  \left[-rb_0+\alpha_3^*\lambda_0\right](-t)^{r-1}&=o((-t)^{r-1})\\
  \left[-b_0+\alpha_1^*\lambda_0\right](-t)^{r}&=o((-t)^{r}).
\end{align}
From the last two equations, a solutions with $b_0\neq 0$ requires
$r=\alpha_3^*/\alpha_1^*=\beta$. Then we find
\begin{align}
  \xi_0&=-\frac{G^*}{\alpha_3^*}b_0,\\
  p_0&=0,\\
  \lambda_0&=\frac{1}{\alpha_1^*}b_0.
\end{align}
Thus
\begin{equation}
  b=\bar{b}(t,0)+b_0(-t)^{\beta}+o((-t)^{\beta}).
\label{eq:outerb_gen}
\end{equation}

\subsection{Matching the inner and outer solutions}

The inner system is given by
\eqref{eq:specialX}--\eqref{eq:specialL}. There is a fast timescale
$\delta$ (in $s$ time units). The fast dynamics is one-dimensional and $\tilde{z}$ 
evolves quickly to the slow manifold is $\tilde{z}=\tilde{y}/2$.
The slow system becomes
\begin{align*}
  \tilde{\xi}^\prime&=\tilde{F}(\xi)+\tilde{G}(\xi)\lambda_N\\
  \tilde{p}^\prime&=\alpha_1(\xi)\\
  \tilde{b}^\prime&=\alpha_2(\xi)-\alpha_3(\xi)\lambda_N\\
  \tilde{y}^\prime&=\tilde{v}\\
  \tilde{v}^\prime&=\tilde{b}+\tilde{p}\lambda_N,
\end{align*}
with $\lambda_N=-\tilde{y}/2$.

Motivated by the preliminary simulations of Sec.~\ref{sec:aymptoticsim}, 
we will look  for solutions (in the form of hatted variables) 
that are scaled deviations from the distinguished trajectory 
of the form
\begin{align}
  \tilde{p} & =\tilde{\bar{p}}(s,\epsilon^{1/3}) \nonumber\\
  \tilde{b}&=\tilde{\bar{b}}(s,\epsilon^{1/3})+\epsilon^{2(\beta-1)/3}\tilde{\hat{b}}(s) \label{eq:bpert} \\
  \tilde{y}&=\tilde{\bar{y}}(s,\epsilon^{1/3})+\epsilon^{(2(\beta-1)/3}\hat{y}(s)
\nonumber \\
  \tilde{v}&=\epsilon^{4/3}\tilde{\bar{v}}(s,\epsilon^{1/3})+\epsilon^{(2(\beta-1)/3}\hat{v}(s).
\nonumber 
\end{align}

Then, in the limit $\epsilon\to 0$, to leading order in $\xi$, using
the fact that $\tilde{\bar{p}} = \alpha_1^*s+ \ldots$, 
we get 
\begin{align*}
  \hat{b}^\prime&=\alpha_3^*\hat{y}/2,  \\
  \hat{y}^\prime&=\hat{v},\\
  \hat{v}^\prime&=\hat{b}-\alpha_1^*s\hat{y}/2.
\end{align*}
Elimination of $\hat{y}$ and $\hat{v}$ gives
\begin{equation*}
  \hat{b}^{\prime \prime \prime}+\frac{\alpha_1^*}{2}s \hat{b}^\prime-\frac{\alpha_3^*}{2}\hat{b}=0.
\end{equation*}
Rescaling time to $\tau= \kappa s$ with   $\kappa=(-\alpha_1^*/2)^{1/3}$, 
we get precisely the same equation \eq{eq:hypergeomeqn} that we obtained
for perturbations to the distinguished trajectory for the 
as we obtained for the example system in Sec.~\ref{sec:4}
whose asymptotics are summarised in Appendix \ref{A:1}.
The rest of the analysis of the dynamics of this equation follows
exactly as in Sec.~\ref{subsec:4.4}.
In particular, matching with the outer equation 
\eqref{eq:outerb_gen} shows that
\begin{equation*}
  \hat{b}(s)=b_0\kappa^{-\beta}\eps^{2\beta/3}\Theta(\kappa s,\beta), 
\end{equation*}
where the initial constant $b_0$ determines the sign of the perturbation from
the distinguished trajectory.   
Thus, applying the results from the Appendix on the 
asymptotics of hypergeometric functions, we get the same 
conditions \eqref{eq:dTheta_sign} that determine whether lift-off or
IWC occur.

Moreover, the implications for the dynamics are precisely as discussed in
Sec.~\ref{subsec:4.5}. 

\section{Application to a frictional impact oscillator}
\label{sec:6}

We now apply the previously developed theory to a frictional impact
oscillator proposed by \cite{leine2008}, see also
Fig. \ref{fig:examples}(b). Our goal here is to verify that the
approximate solutions produced by the expansion scheme of
Sec. \ref{sec:5} match the results of brute-force numerical
simulation.

\subsection{The system}

The frictional impact oscillator consists of two point masses, two springs and two dampers. The mass $m_1$ is in unilateral contact with a moving belt with friction coefficient $\mu$. The system has two mechanical degrees of freedom and thus we use the generalized coordinates
\begin{equation*}
  q=\begin{pmatrix}
  \phi \\ \psi
  \end{pmatrix}
\end{equation*}
As \cite{leine2008} shows, its motion is governed by the equation
$$
M(q)\ddot q=
f(q,\dot{q})+
Q_N(q)\lambda_N+Q_T\lambda_T
$$
with
\begin{eqnarray*}
  M(q)&=&\begin{pmatrix}
  m_1l^2 & m_1l\sin(\phi) \\ m_1l\sin(\phi) & m_1+m_2
  \end{pmatrix}, \\
  f(q,\dot{q})&=&\begin{pmatrix}
  -k_\phi(\phi-\phi_0)-c_\phi\dot{\phi}-m_1gl\sin(\phi) \\
  -k_\psi\psi-c_\psi\dot{\psi}-(m_1+m_2)g-m_1l\cos(\phi)\dot{\phi}^2
  \end{pmatrix}\\
  Q_T(q)&=&(\partial x/\partial q)^T=\begin{pmatrix}
  l\cos(\phi) \\ 0
  \end{pmatrix}, \\
  Q_N(q)&=&(\partial y/\partial q)^T=\begin{pmatrix}
  l\sin(\phi) \\ 1
  \end{pmatrix} \\
  \end{eqnarray*}
The horizontal and vertical position functions of the contact point are
\begin{equation*}
  x(q)=l\sin(\phi),\quad y(q)=\psi+l(1-\cos(\phi)).
\end{equation*}
Assuming positive slip, (i.e. $\lambda_T=-\mu\lambda_N$), these equations can be written in the form of \eqref{eq:genx} with  
\begin{align*}
\xi=\begin{pmatrix}
  \phi\\ \psi\\\dot\phi\\ \dot\psi 
  \end{pmatrix},
F=\begin{pmatrix}
  \xi_3 \\ \xi_4\\M^{-1}f 
  \end{pmatrix},
G=\begin{pmatrix}
  0 \\ 0\\M^{-1}(-Q_N+\mu Q_T) 
  \end{pmatrix}.
\end{align*}
Using the procedure described in Sec. \ref{sec:2} we can derive expressions for $v$, $p$, $b$,
$\alpha_1$, $\alpha_2$, and $\alpha_3$. These are given in Appendix \ref{A:2}.

To study the behaviour near the singularity, we reduce the number of
parameters by setting 
\begin{align*}
  m_2&=m_1,    \qquad \mu =\frac{25}{12}, \quad 
  \phi_0 =\phi^*+\frac{49}{20}-\frac{7}{12}\beta-\frac{9}{50}\kappa, \\
  k_\phi&=m_1gl, \quad  c_\phi =0, \qquad 
  k_\psi=\kappa\frac{m_1g}{l},  \quad 
c_\psi=\frac{25}{108}(18-7\beta)m_1\sqrt{\frac{g}{l}}.
\end{align*}
where the values of $m_1$, $g$ and $l$ determines a scale for mass,
length, and time, but not have any other influence on the dynamics of
the system. We leave the two parameters $\beta$ and $\kappa$ (of
dimension 1) to be specified later.

The chosen values of $\mu$ and $\phi_0$ ensure that that we have the
singularity at
\begin{align*}
  \cos(\phi^*)& = \frac{3}{5}, \quad  \sin(\phi^*) = \frac{4}{5} 
\quad 
  \dot{\phi}^* =-\sqrt{\frac{g}{l}} \\
  \psi^*&= -\frac{2}{5}l \quad \dot{\psi}^* = \frac{4}{5}\sqrt{gl}.
\end{align*} 
Furthermore we have
\begin{align*}
  \alpha_1^*&=-\frac{175}{408}\frac{1}{m_1}\sqrt{\frac{g}{l}}, \qquad 
  \alpha_3^* =-\frac{175}{408}\beta\frac{1}{m_1}\sqrt{\frac{g}{l}} \\
  \alpha_2^*&= \frac{30625\beta^2+9450\beta\kappa-61425\beta+8262\kappa-126765}{55080}g\sqrt{\frac{g}{l}}
\end{align*}
which means that the quotient between $\alpha_3^*$ and $\alpha_1^*$ is
equal to $\beta$ in accordance with \eqref{eq:beta}, and the sign of
$\alpha_2^*$ is controlled by $\kappa$. Hence, the frictional impact
oscillator may belong to any of the classes I, II, and III,
furthermore, $\beta$ may take any desired value.

\subsection{Numerical verification in case III}

For numerical simulations, we use units based on $m_1$, $l$, and
$g$. By taking $\beta=7/3$, $\kappa=0$, we get $\alpha_1^*=-0.4289$,
$\alpha_2^*=-1.8764$, and $\alpha_3^*=-1.0008$, which corresponds to
case III. For contact smoothing, we use the compliant model of Sec. 2 with $\eps=10^{-6}$. Two sets of
initial conditions are tested: The angle coordinate is set to
\begin{equation*}
  \phi=\phi^*+0.1,\quad\dot{\phi}=-0.9\text{ or }\phi=\phi^*+0.1,\quad\dot{\phi}=-0.5
\end{equation*}
and the linear coordinate is set to be on just in contact
$y=v=0$:
\begin{equation*}
  \psi=\cos(\phi)-1,\quad\dot{\psi}=-\sin(\phi)\dot{\phi}.
\end{equation*}
Relaxation of the $z$ dynamics was found to take about $10^{-3}$ time
units, whereas the system was simulated for ${\cal O}(10^{-1})$ time
units.  Additionally, a third initial condition approximately on the
distinguished trajectory at $p=0.01$ was chosen.

\begin{figure}
\begin{center}
\includegraphics[width=0.7\textwidth]{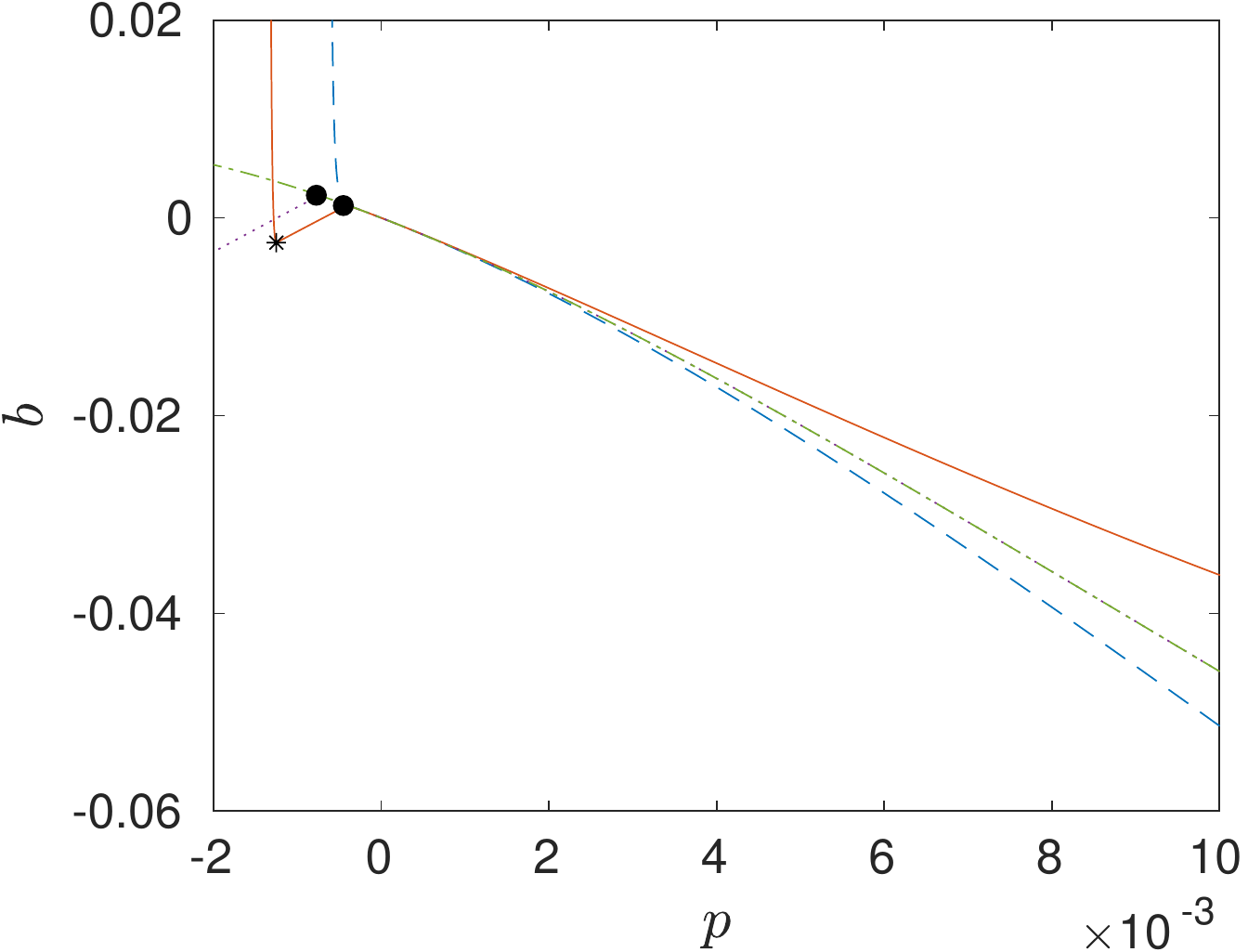}
\includegraphics[width=0.7\textwidth]{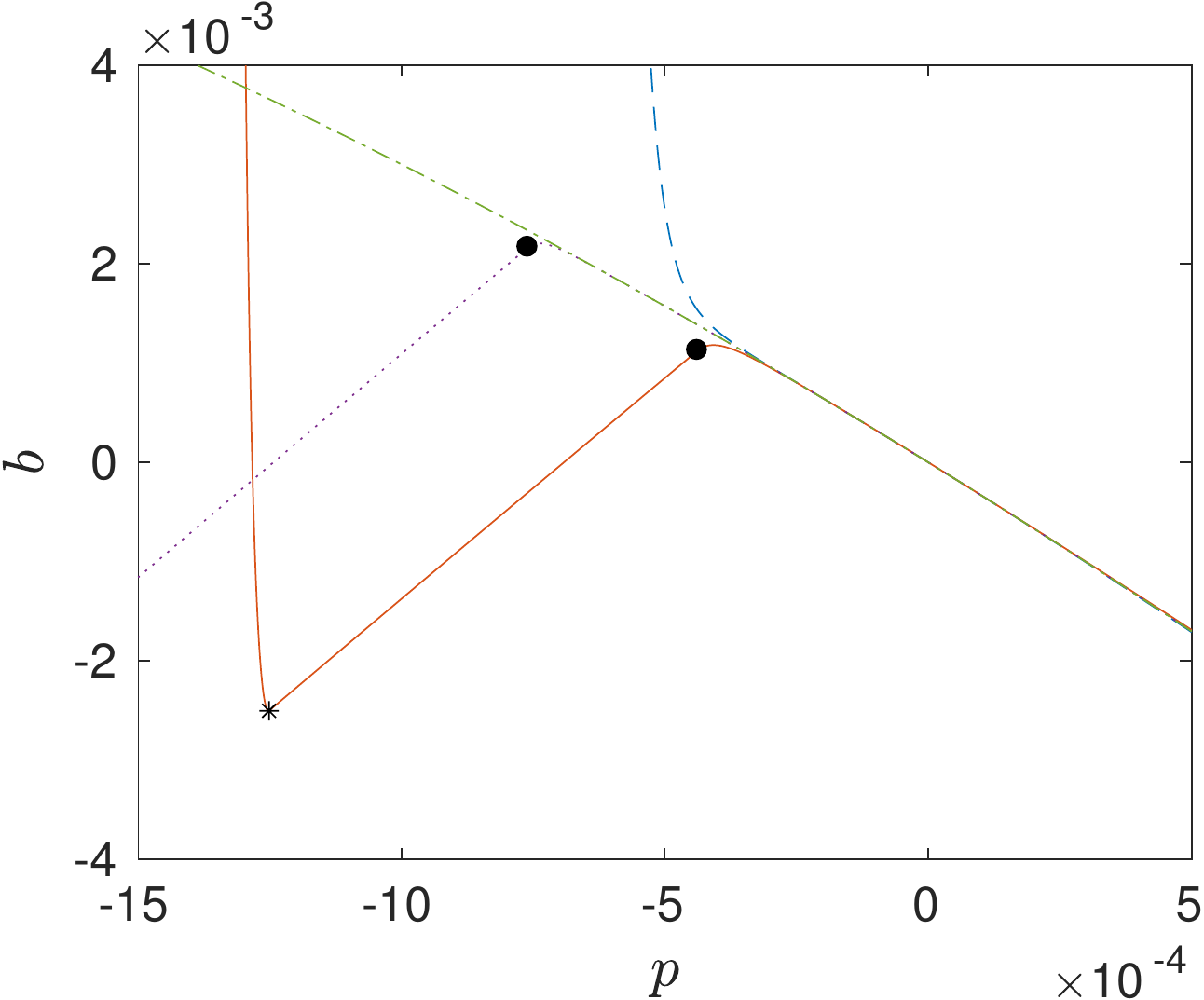}
\caption{Numerical simulations of $p$ versus $b$ for the frictional
  impact oscillator with three different initial conditions
  (red/solid, blue/dashed, and purple/dotted curves), and asymptotic
  approximation of the distinguished trajectory (green/dash-dotted
  curve). Lift-off events are marked with a solid circle symbol, touch-down events with an asterisk symbol. The lower panel  is a zoomed version of the same diagrams.}
\label{fig:LeineFig1}
\end{center}
\end{figure}

\begin{figure}
\begin{center}
\includegraphics[width=0.7\textwidth]{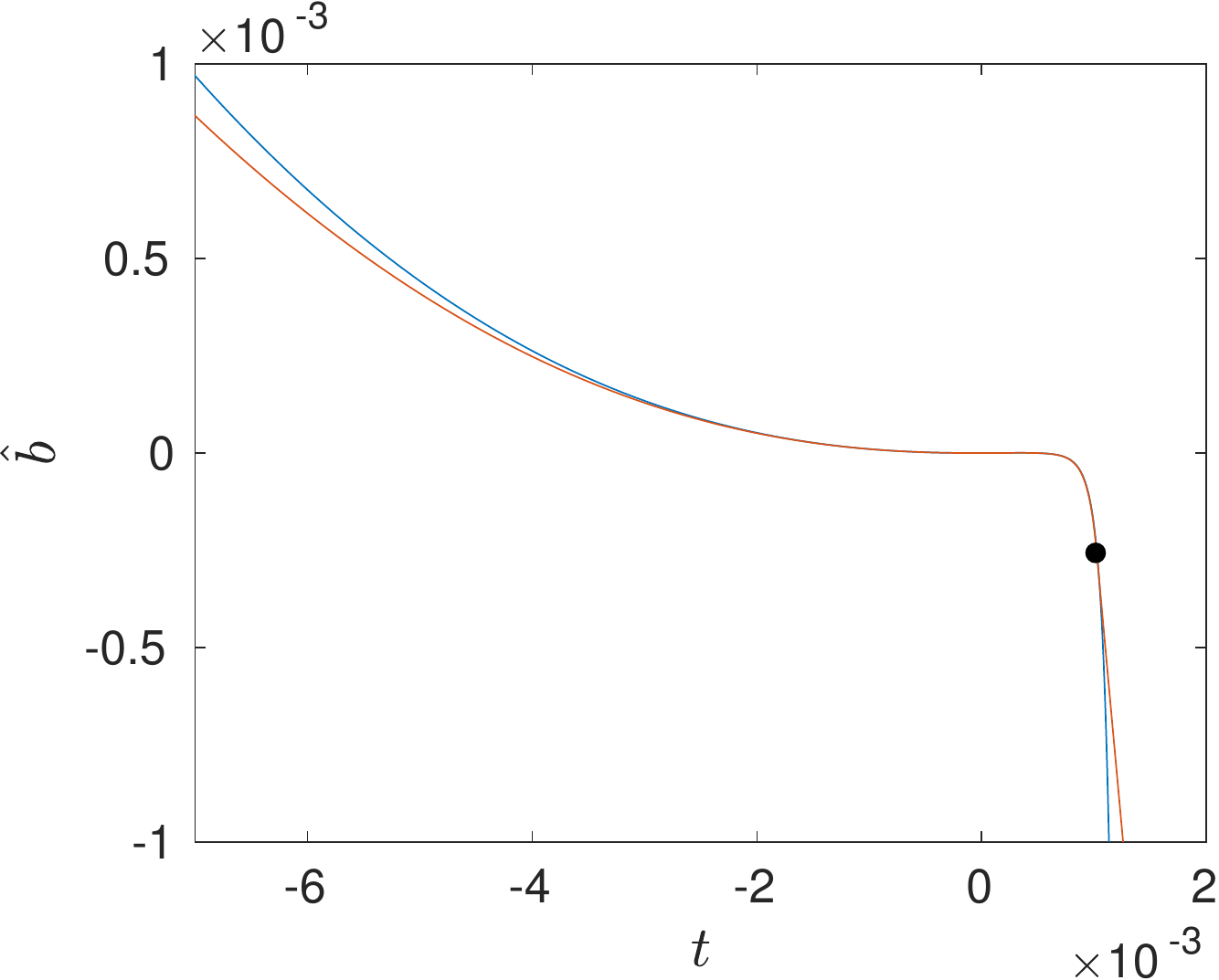}
\includegraphics[width=0.7\textwidth]{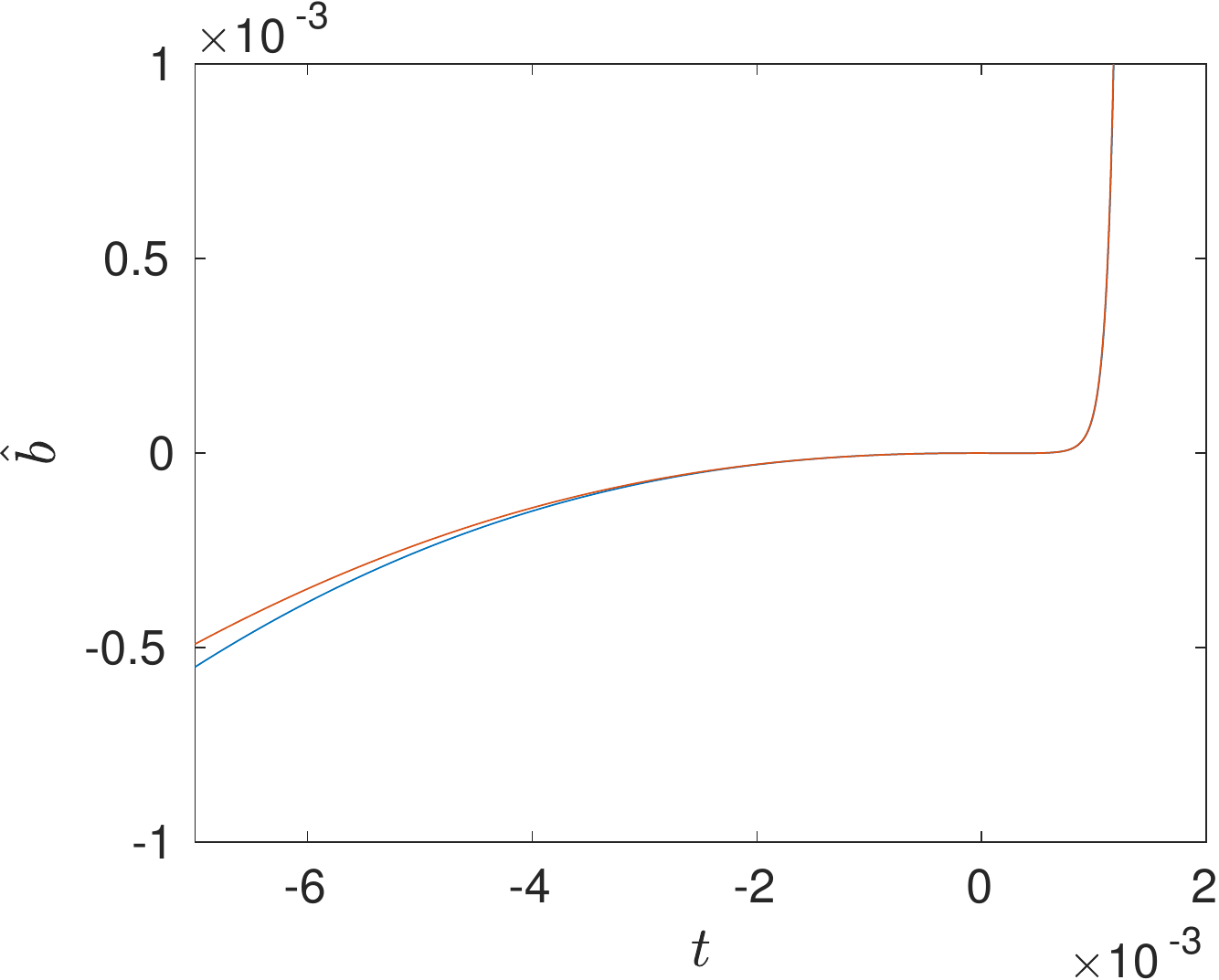}
\caption{Diagrams of $\hat{b}$ versus time of the frictional impact oscillator for two initial conditions. The blue curves were obtained by numerical simulation, whereas the red curves are given by the suitable scaled $\Theta$ function.} 
\label{fig:LeineFig2}
\end{center}
\end{figure}


Figure \ref{fig:LeineFig1} shows a $p$ vs $b$ diagram. The red/solid
curve (first initial condition) passes the (ghost) singularity, lifts
off and then touches down again, initiating an ``impact''. The
blue/dashed curve (second initial condition) goes directly to an
''impact''. The purple/dotted curve is the trajectory using an initial
condition approximately on the distinguished trajectory, and the
green/dash-dotted curve is the appoximate distinguished trajectory
itself, computed from the power series with $M=15$. These two are
indistinguishable for $p>0$, but although the purple/dotted curve is able
to follow the distinguished trajectory further into $p<0$ than the
other initial conditions, it still eventually deviates. In all, these
results are consistent with our finding that the distinguished
trajectory is on a separatarix.

Figure \ref{fig:LeineFig2}(a) shows a diagram of the deviation
$\hat{b}$ (see \eqref{eq:bpert}) from the distinguished trajectory
versus $t$.  The time origin is shifted to make $p=0$ at $t=0$. The
red curve is simulation using the first initial condition. The blue
curve is computed using the suitably scaled hypergeometric $\Theta$
function, where time scale is based on $\alpha_1^*$, and the amplitude
scale is adjusted to make the curves coincide when $t=0$. Lift-off in
the simulation takes place just after $t=10^{-3}$, explaining the
fast-growing deviation between the two curves for more positive times,
since the hypergeometric solution assumes contact. At the same time,
the deviation for negative times grows more slowly, and it is a
natural consequence of the approximations used when developing the
inner system. Figure \ref{fig:LeineFig2}(b) shows the same thing for
the second initial condition. In this case there is no loss of
contact, and the two curves fit each other very well for positive
times.

\section{Conclusion}
\label{sec:7}

The analysis in this paper provides a key step in the resolution of
one of the simplest consequences of the paradox on the inconsistency
of rigid body mechanics subject to Coulomb frication, first described
by Painlev\'{e} in 1895 \cite{Painleve1895}.  Despite numerous treatments
in the intervening 120 years or so, as pointed out in
\cite{PainleveReview}, there remain many unsolved problems. Even for
planar configurations with a single frictional point contact, 
it was previsouly known that open sets of initial conditions can 
approach the finite-time singularity that is known as dynamic jam, 
represented by the G-spot.  
What we have established in
this paper is a general method for establishing what happens
beyond the G-spot, at least in theory, and also understanding the
sensitivity of what is observed to any smoothing through contact
regularisation.

There are several weakness to the analysis we have presented. First, we 
have been unable to resolve in general what happens beyond the first lift-off
or onset of IWC. Not only is there extreme sensitivity because during an 
IWC, but lift-off occurs with vanishingly small free normal acceleration 
as $\eps \to 0$. In cases III and I this would occur with 
$\dot{b}<0$ so that lift-off would lead rapidly to further impact with 
small normal velocity. 
Whether this impact would again lead to further lift off close to the G-spot 
is unclear in general. It is conceivable that in the limit $\eps\to 0$ one
might have an infinite sequence of impacts with which might accumulate either
in forward time (chatter) or in reverse time (reverse chatter). The latter
would represent a point of infinite indeterminancy, as analysed in
\cite{paper2}. Further analysis of the dynamics post the first lift-off will
form the subject of future work. 

A second weakness is a lack of rigour. While we have 
formulated the existence of the distinguished trajectory as a Theorem,
in general our analysis is asymptotic in nature. There is also a frustrating
lack of a proof in cases where we have identified that an IWC probably occurs, 
because we cannot rule out the possibility of a lift-off in certain 
pathological examples. In particular, even though the
asymptotics indicate a trajectory for which $\tilde{\hat{y}}$ diverges to 
$-\infty$ for large $s>0$ and $\tilde{\hat{y}} \ll 0$ for $s=0$, this 
is not sufficient to show that $\tilde{\hat{y}}$ remains negative 
for all $s>0$. 
Numerical results indicate that an impact always occurs. Perhaps further
study of the appropriate generalised hypergeometric functions will 
shed further light on this question.
During the final preparation of this manuscript we also become aware
of the independent work of Hogan \& Kristiansen \cite{HoganKristiansen2}
which studies a similar problem to the one considered here. They use
completely different methods, namely geometric singular perturbation theory,
to establish the existence of a canard trajectory. It is probable that 
a combination of their analysis with the asympotitic analysis conducted here
would lead to some more comprehensive results. 

A third weakness is the lack of experimental
work to confirm what might happen in practice. In fact,
while there have been several practical observations of the
consequences of the Painlev\'{e} paradox (see \cite{PainleveReview}),
we are not aware of any detailed quantitative experimental studies.
One of the difficulties here is that dynamic jam represents a point of
extreme sensitivity in the dynamics, therefore what is observed is
likely to be highly dependent on the precise details of any
imperfections or asperities in any practical model. Nevertheless, it
would seem to be high time for the design of a detailed test rig to
demonstrate each of cases I to III illustrated here.

Finally we should point out that the problem studied here is rather 
idealised. In practice, no structure ever undergoes point contact per se,
there is always some form of regional contact. As shown in \cite{Varkonyi_unp}
the dynamics of systems with multiple point contacts can be much more complex,
with various novel forms of Painlev\'{e} paradox that involve interaction
between simultaneous contacts. Also, as demonstrated in 
\cite{PainleveReview}[Sec.~7], there is yet more complexity 
if we study fully three-dimensional dynamics. For example, for
certain configurations it is possible to enter 
the Painlev\'{e} region $p<0$ without passing through a neighbourhood of
the G-spot. 

There are clearly many situations that require further analysis
along the lines developed in this paper. 

\subsection*{Acknowledgements}
This work was initiated at the Centre Recherca Matem\`{a}tica 
(CRM) Barcelona during the three-month programme in 2016 on Nonsmooth
Dynamical Systems. The authors thank the CRM for its support, and
especially Mike Jeffrey and
Thibaut Putelat for useful discussion. Preliminary ideas for this paper
were developed in collaboration with Harry Dankowicz, whose insights we also
gratefully acknowledge.  
We are also grateful to Kristian Kristiansen and John Hogan for sharing
their unpublished independent work with us at the latter stages of
preparation of this paper.  PLV acknowledges support from the
National Research, Innovation and Development Office of Hungary under
grant K104501 and ARC from the UK EPSRC under Programme Grant
``Engineering Nonlinearity'' EP/K003836/2. 

\bibliographystyle{plain}
\bibliography{painleve}

\newpage

\appendix
\numberwithin{equation}{section}
\section{Generalised hypergeometric functions and their large time 
asymptotics}
\label{A:1}

Consider the following third-order non-autonomous equation
\begin{equation}
  \frac{d^3}{d \tau^3}\theta-\tau\frac{d}{ d\tau} \theta+\beta\theta=0.
\label{eq:btau}
\end{equation}
The solutions to this equation can be expressed in terms of generalised
hypergeometric functions.  In particular, by standard
results, see e.g.~\cite[Ch.16]{NIST}, we have the following result.

\begin{theorem}
\label{th:3}
The general solution of the 
differential equation \eqref{eq:btau} 
can be expressed as
\begin{equation*}
  \theta(\tau)=\theta(0)\:{}_1F_2\left(-\frac{\beta}{3};\frac{1}{3},\frac{2}{3};\frac{\tau^3}{9}\right)+
  \frac{d}{d\tau}\theta(0)\tau\:{}_1F_2\left(\frac{1}{3}-\frac{\beta}{3};\frac{2}{3},\frac{4}{3};\frac{\tau^3}{9}\right)+
  \frac{d^2}{d\tau^2}\theta(0)\frac{\tau^2}{2}\:{}_1F_2\left(\frac{2}{3}-\frac{\beta}{3};\frac{4}{3},\frac{5}{3};\frac{\tau^3}{9}\right).
\end{equation*}
where ${}_1F_2$ is the generalised hypergeometric function
with indices $[1,2]$.  
\end{theorem}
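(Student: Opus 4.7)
The plan is to build the general solution explicitly by a power series (Frobenius) expansion around the ordinary point $\tau = 0$ and then recognise the three resulting series as $\,{}_1F_2$ hypergeometric functions. Since \eqref{eq:btau} is a linear third-order ODE whose coefficients are polynomials in $\tau$ with the leading coefficient $1$ at the top derivative, the point $\tau=0$ is ordinary, the solution space is three-dimensional, and every solution is entire in $\tau$; the three basis solutions will be uniquely determined by the three initial data $\theta(0)$, $\theta'(0)$, $\theta''(0)$.

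First I would substitute $\theta(\tau)=\sum_{n\ge0}a_n\tau^n$ into \eqref{eq:btau}, shift the index in the $\theta'''$ term, and collect coefficients of $\tau^n$ to obtain the three-term recurrence
\begin{equation*}
(n+1)(n+2)(n+3)\,a_{n+3}=(n-\beta)\,a_n,\qquad n\ge 0.
\end{equation*}
This recurrence couples only indices that are congruent mod $3$, so it decouples into three independent subseries seeded by $a_0$, $a_1$, $a_2$ respectively. The first bookkeeping task is to solve each subseries in closed form in terms of Pochhammer symbols: iterating gives
\begin{equation*}
a_{3k}=\frac{(-\beta/3)_k}{9^k\,(1/3)_k\,(2/3)_k\,k!}\,a_0,\quad
a_{3k+1}=\frac{((1-\beta)/3)_k}{9^k\,(2/3)_k\,(4/3)_k\,k!}\,a_1,\quad
a_{3k+2}=\frac{((2-\beta)/3)_k}{9^k\,(4/3)_k\,(5/3)_k\,k!}\,a_2,
\end{equation*}
after rewriting each product of three consecutive linear factors $(3j+i)$ as $3^k(i/3)_k$ (or $3^kk!$ when $i=3$). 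Recognising these sums directly from the definition ${}_1F_2(a;b_1,b_2;x)=\sum_{k\ge 0}\frac{(a)_k}{(b_1)_k(b_2)_k}\frac{x^k}{k!}$ with $x=\tau^3/9$ yields the three basis solutions
\begin{equation*}
\theta_0=\,{}_1F_2\!\left(-\tfrac{\beta}{3};\tfrac{1}{3},\tfrac{2}{3};\tfrac{\tau^3}{9}\right),\quad
\theta_1=\tau\,{}_1F_2\!\left(\tfrac{1-\beta}{3};\tfrac{2}{3},\tfrac{4}{3};\tfrac{\tau^3}{9}\right),\quad
\theta_2=\tfrac{\tau^2}{2}\,{}_1F_2\!\left(\tfrac{2-\beta}{3};\tfrac{4}{3},\tfrac{5}{3};\tfrac{\tau^3}{9}\right).
\end{equation*}

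Finally, evaluating at $\tau=0$ gives $\theta_0(0)=1$, $\theta_1'(0)=1$, $\theta_2''(0)=1$ with the remaining Taylor data vanishing; hence $\{\theta_0,\theta_1,\theta_2\}$ is the basis dual to the point evaluation functionals $\theta\mapsto\theta(0),\theta'(0),\theta''(0)$, and the stated formula follows by linearity. Convergence of each series for all $\tau\in\mathbb{R}$ is immediate from the ratio test, since $|a_{n+3}/a_n|=O(n^{-2})$. The main obstacle is purely the algebraic matching of the three subrecurrences to the Pochhammer parametrisation of ${}_1F_2$; once the $3^k$ factors are correctly extracted from the products in the numerator and denominator, the identification with the hypergeometric series is automatic, and no deeper analytic input is required. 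Strictly speaking nothing beyond careful index-shifting has to be proved.
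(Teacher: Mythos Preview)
Your argument is correct and complete: the Frobenius expansion at the ordinary point $\tau=0$ gives exactly the three-term recurrence $(n+1)(n+2)(n+3)a_{n+3}=(n-\beta)a_n$, the mod-$3$ decoupling is immediate, and your Pochhammer bookkeeping checks out line by line. The paper itself does not supply a proof here at all --- it simply records the result as ``standard'' with a reference to \cite[Ch.~16]{NIST} --- so your derivation is in fact more explicit than what the paper offers, while being precisely the computation underlying the cited reference.
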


\bigskip

We are interested in the asympotics of this solution as $|\tau| \to \infty$. 
Using the general asymptotic expansion of
${}_1F_2$ for complex arguments in the limit of large $|\tau|$, we can formulate
the following results. 

\bigskip

\begin{theorem}[Asymptotics of ${}_1F_2$ for large negative $\tau$]
\label{th:4}
Define $h$ as a formal series
\begin{equation}
  h(\tau,\beta)=3^{\beta/3-1}(-\tau)^\beta\sum_{k=0}^\infty\frac{1}{k!3^k(-\tau)^{3k}\Gamma(1+\beta-3k)}
\label{eq:hdef}
\end{equation}
and $e_r$, $e_i$ as the real and imaginary
parts of the formal series
\begin{equation}
  e_r(\tau,\beta)+ie_i(\tau,\beta)=\sqrt{\pi}3^{\beta/3-1}(-\tau)^{-\beta/2-3/4}e^{i(\pi\beta/6+\pi/4-2(-\tau)^{3/2}/3)}\sum_{k=0}^\infty
  \frac{c_k(\beta)i^k3^k}{2^k(-\tau)^{3k/2}}.
\label{eq:eier}
\end{equation}
Here the coefficients $c_k$ are determined via a somewhat complicated 
recurrence relation, see \cite[Eq.16.11.4]{NIST}. 
In particular, we have 
\begin{equation}
c_0=1. 
\label{eq:cisdef}
\end{equation}
Then, provided $\beta$ is not an integer, asymptotically, as $\tau\rightarrow-\infty$
\begin{equation}
  {}_1F_2\left(-\frac{\beta}{3};\frac{1}{3},\frac{2}{3};\frac{\tau^3}{9}\right)
  \sim3\Gamma\left(\frac{3+\beta}{3}\right)h(\tau,\beta)
  +\frac{3}{\Gamma\left(-\frac{\beta}{3}\right)}2e_r(\tau,\beta),
\label{eq:bsmall1}
\end{equation}
\begin{equation}
  \tau\:{}_1F_2\left(\frac{1}{3}-\frac{\beta}{3};\frac{2}{3},\frac{4}{3};\frac{\tau^3}{9}\right),
  \sim-3^{2/3}\Gamma\left(\frac{2+\beta}{3}\right)h(\tau,\beta)
  -\frac{3^{2/3}}{\Gamma\left(\frac{1-\beta}{3}\right)}\left(e_r(\tau,\beta)-\sqrt{3}e_i(\tau,\beta)\right),
\label{eq:bsmall2}
\end{equation}
\begin{equation}
  \frac{\tau^2}{2}\:{}_1F_2\left(\frac{2}{3}-\frac{\beta}{3};\frac{4}{3},\frac{5}{3};\frac{\tau^3}{9}\right),
  \sim3^{1/3}\Gamma\left(\frac{1+\beta}{3}\right)h(\tau,\beta)
  +\frac{3^{1/3}}{\Gamma\left(\frac{2-\beta}{3}\right)}\left(-e_r(\tau,\beta)-\sqrt{3}e_i(\tau,\beta)\right).
\label{eq:bsmall3}
\end{equation}
\end{theorem}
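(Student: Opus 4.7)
Since \eqref{eq:btau} is a linear third-order ODE, its space of formal asymptotic solutions at $\tau\to -\infty$ is three-dimensional. The three hypergeometric series on the left-hand sides of \eqref{eq:bsmall1}--\eqref{eq:bsmall3} are precisely the three canonical solutions of \eqref{eq:btau} distinguished by the initial data $(\theta(0),\theta'(0),\theta''(0))$ equal to $(1,0,0)$, $(0,1,0)$, and $(0,0,1)$ respectively, as recorded in Theorem~\ref{th:3}. The plan is therefore: (i) exhibit an explicit asymptotic basis $\{h, e_r, e_i\}$ of formal solutions of \eqref{eq:btau}; (ii) express each canonical hypergeometric solution as a linear combination of this basis; and (iii) identify the connection coefficients with the specific $\Gamma$-ratios appearing in \eqref{eq:bsmall1}--\eqref{eq:bsmall3}.

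For (i), substituting the algebraic ansatz $\theta = (-\tau)^\beta \sum_k a_k (-\tau)^{-3k}$ into \eqref{eq:btau} produces the recurrence $3k\,a_k = (\beta-3k+3)(\beta-3k+2)(\beta-3k+1)\,a_{k-1}$, which, together with $a_0 = 1/\Gamma(1+\beta)$, reproduces $h(\tau,\beta)$ of \eqref{eq:hdef} and verifies that it is a formal solution. For the two oscillatory branches, WKB on \eqref{eq:btau} gives the eikonal equation $(S')^3 + \tau S' = 0$, whose non-trivial roots $S' = \pm i(-\tau)^{1/2}$ integrate to $S = \mp 2i(-\tau)^{3/2}/3$; the transport equation then fixes the amplitude prefactor $(-\tau)^{-\beta/2-3/4}$, and substitution of the full ansatz yields the recurrence \cite[Eq.~16.11.4]{NIST} for the $c_k(\beta)$. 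Linear independence of $\{h, e_r, e_i\}$ is immediate from their distinct growth and oscillation characters.

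For (ii)--(iii), the most direct approach is via the Mellin--Barnes representation
\begin{equation*}
{}_1F_2(a;b_1,b_2;-w) = \frac{\Gamma(b_1)\Gamma(b_2)}{\Gamma(a)}\cdot\frac{1}{2\pi i}\int_L \frac{\Gamma(a+s)\Gamma(-s)}{\Gamma(b_1+s)\Gamma(b_2+s)}\,w^s\, ds,
\end{equation*}
applied with $w=-\tau^3/9 > 0$ for $\tau < 0$ and a contour $L$ separating the pole sets of $\Gamma(a+s)$ and $\Gamma(-s)$. Closing $L$ to the left collects the residues at $s=-a-k$, $k\geq 0$; after assembling the prefactor and applying the reflection formula to the $\Gamma(b_i-a-k)^{-1}$ factors, these residues sum to $3\Gamma((3+\beta)/3)\,h(\tau,\beta)$ for the first hypergeometric, and to the corresponding expressions for the other two. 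The complementary piece, obtained by steepest descent around the two conjugate saddles of the integrand (where Stirling applied to the Gamma ratios yields the phase function $\tfrac{2}{3}w^{1/3}$), contributes a complex exponential of argument $\pm 2(-\tau)^{3/2}/3$ whose amplitude, split into real and imaginary parts, matches $e_r$ and $e_i$ with the claimed coefficients $2/\Gamma(-\beta/3)$, $-1/\Gamma((1-\beta)/3)$, and $-1/\Gamma((2-\beta)/3)$.

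The main technical obstacle is the saddle-point evaluation: the Stokes phase $\pi/4$, the $\beta$-dependent phase $\pi\beta/6$, and the overall prefactor $\sqrt{\pi}\,3^{\beta/3-1}$ must all emerge from a careful Stirling expansion of $\Gamma(a+s)/[\Gamma(b_1+s)\Gamma(b_2+s)]$ at the two conjugate saddles, and the two contributions must be summed with the correct relative phase. The hypothesis that $\beta$ is not an integer enters here to ensure that the residue series does not terminate or develop logarithmic confluences with the exponential part. As an independent sanity check, each of \eqref{eq:bsmall1}--\eqref{eq:bsmall3} can be substituted formally into \eqref{eq:btau} to verify the ODE order by order. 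Alternatively, one may bypass the contour analysis altogether and invoke the general asymptotic expansion of ${}_pF_q$ with $p<q$ (\cite[\S 16.11]{NIST}), specialising to $(p,q)=(1,2)$ and translating the DLMF normalisation into that of $h$, $e_r$, $e_i$ by Gamma-function arithmetic.
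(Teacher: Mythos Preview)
The paper does not actually supply a proof of this theorem: it simply states the result as a direct specialisation of ``the general asymptotic expansion of ${}_1F_2$ for complex arguments in the limit of large $|\tau|$'' and cites \cite[Ch.~16]{NIST}. Your final alternative---bypass the contour analysis and invoke \cite[\S 16.11]{NIST} for ${}_pF_q$ with $(p,q)=(1,2)$, then translate the normalisation---is therefore exactly the paper's route, and would have sufficed on its own.

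Your main proposal (construct the formal basis $\{h,e_r,e_i\}$ by direct substitution and WKB, then read off the connection coefficients from the Mellin--Barnes integral via residues plus two conjugate saddles) is a genuinely more explicit derivation that would, if executed, rederive the DLMF result from first principles rather than quote it. The recurrence you give for the algebraic branch is correct, and the WKB eikonal/transport analysis for the oscillatory branches is the standard one. The only part you flag as delicate---the Stirling expansion at the saddles yielding the precise phases $\pi/4$, $\pi\beta/6$ and prefactor $\sqrt{\pi}\,3^{\beta/3-1}$---is indeed where all the bookkeeping lives, but there is no conceptual obstruction. So your argument is correct and strictly more informative than the paper's, which treats the theorem as a citation rather than something to be proved.
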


\bigskip

We want to choose the specific solution 
$\theta(\tau)=\Theta(\tau)$ whose initial conditions are such that 
the coefficients 
of the highly oscillatory terms $e_r$ and $e_i$ vanish. 
The remaining $h$ term is dominated by its first term, 
which is proportional to $(-\tau)^\beta$. 
In particular, using the particular initial conditions
\begin{eqnarray}
  \Theta(0)&=&\frac{\Gamma\left(\frac{-\beta}{3}\right)}{3^{(3+\beta)/3}\Gamma(-\beta)}, \label{eq:t01}  \\
  \frac{d\Theta}{d\tau}(0)&=&\frac{\Gamma\left(\frac{1-\beta}{3}\right)}{3^{(2+\beta)/3}\Gamma(-\beta)},  \label{eq:t02} \\
  \frac{d^2\Theta}{d \tau^2}(0)&=&\frac{\Gamma\left(\frac{2-\beta}{3}\right)}{3^{(1+\beta)/3}\Gamma(-\beta)}, \label{eq:t03}
\end{eqnarray}
we define a function $\Theta$ with the asymptotic behaviour
\begin{equation*}
  \Theta(\tau,\beta)\sim3^{1-\beta/3}\Gamma\left(1+\beta\right)h(\tau,\beta)\sim(-\tau)^\beta
\end{equation*}
as $\tau\rightarrow-\infty$.

\bigskip

\begin{theorem}[Asymptotic of ${}_1F_2$ for large positive $\tau$]
\label{th:5}
Define $e_1$ as the formal series
\begin{equation*}
  e_1(\tau,\beta)=\sqrt{\pi}3^{\beta/3-1}(\tau)^{-\beta/2-3/4}e^{2\tau^{3/2}/3}\sum_{k=0}^\infty
  \frac{c_k(\beta)3^k}{2^k(\tau)^{3k/2}}.
\end{equation*}
where the $c_k$ coefficients are defined as in the previous theorem.
Then asymptotically, as $\tau\rightarrow\infty$,
\begin{equation}
  {}_1F_2\left(-\frac{\beta}{3};\frac{1}{3},\frac{2}{3};\frac{\tau^3}{9}\right)
  \sim\frac{3}{\Gamma\left(-\frac{\beta}{3}\right)}e_1(\tau,\beta)
\label{eq:blarge1}
\end{equation}
\begin{equation}
  \tau\:{}_1F_2\left(\frac{1}{3}-\frac{\beta}{3};\frac{2}{3},\frac{4}{3};\frac{\tau^3}{9}\right)
  \sim\frac{3^{2/3}}{\Gamma\left(\frac{1-\beta}{3}\right)}e_1(\tau,\beta)
\label{eq:blarge2}
\end{equation}
\begin{equation}
  \frac{\tau^2}{2}\:{}_1F_2\left(\frac{2}{3}-\frac{\beta}{3};\frac{4}{3},\frac{5}{3};\frac{\tau^3}{9}\right)
  \sim\frac{3^{1/3}}{\Gamma\left(\frac{2-\beta}{3}\right)}e_1(\tau,\beta)
\label{eq:blarge3}
\end{equation}
\end{theorem}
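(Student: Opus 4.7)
The plan is to derive these three asymptotic relations by combining a WKB analysis of the ODE \eqref{eq:btau} with a Mellin--Barnes integral representation of ${}_1F_2$, and then extracting the dominant exponential contribution by steepest descent. First I would note that each of the three ${}_1F_2$ functions listed in Theorem \ref{th:3} is an entire solution of \eqref{eq:btau}, normalised by prescribed values of $\theta$, $\theta'$, $\theta''$ at $\tau=0$. A WKB ansatz $\theta(\tau)=e^{S(\tau)}\tau^r(1+O(\tau^{-3/2}))$ substituted into \eqref{eq:btau} gives to leading order $S'(\tau)^3 = \tau S'(\tau)$, whose nonzero roots are $S'(\tau)=\pm\tau^{1/2}$, integrating to $S(\tau)=\pm\tfrac{2}{3}\tau^{3/2}$; the third root $S'=0$ corresponds to the algebraic solution $\tau^\beta$. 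Matching the next correction pins down $r=-\beta/2-3/4$, and substituting $\theta = e^{2\tau^{3/2}/3}\tau^{-\beta/2-3/4}\sum_{k\ge 0} c_k \tau^{-3k/2}$ back into \eqref{eq:btau} yields the same recurrence for the $c_k$ that appears in Theorem \ref{th:4} (in particular $c_0=1$).

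The second step is to identify, for each ${}_1F_2$, which linear combination of the three WKB modes it is. I would use the Mellin--Barnes representation
\begin{equation*}
{}_1F_2(a;b_1,b_2;w) \;=\; \frac{\Gamma(b_1)\Gamma(b_2)}{\Gamma(a)}\cdot\frac{1}{2\pi i}\int_L \frac{\Gamma(a+s)\Gamma(-s)}{\Gamma(b_1+s)\Gamma(b_2+s)}(-w)^s\,ds,
\end{equation*}
evaluated with $w=\tau^3/9$. Pushing the contour to the right picks up residues at $s=0,1,2,\ldots$, which reassemble into the formal algebraic series (subdominant for $\tau\to+\infty$), while the exponentially growing contribution comes from a saddle of the integrand. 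A standard Laplace analysis of the saddle, together with Stirling expansions for the four Gamma factors, yields a contribution proportional to $w^{(a-b_1-b_2+1/2)/1} e^{2w^{1/2}\cdot 3}$ times $1/\Gamma$ evaluated at a specific argument. Specialising $(a,b_1,b_2)=(-\beta/3,1/3,2/3)$ and substituting $w=\tau^3/9$ produces the prefactor $3/\Gamma(-\beta/3)$ in \eqref{eq:blarge1}; the other two cases $(a,b_1,b_2)=((1-\beta)/3,2/3,4/3)$ and $((2-\beta)/3,4/3,5/3)$ give $3^{2/3}/\Gamma((1-\beta)/3)$ and $3^{1/3}/\Gamma((2-\beta)/3)$ respectively, after absorbing the explicit prefactors $\tau$ and $\tau^2/2$.

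Finally, since as $\tau\to+\infty$ the growing exponential $e_1$ dominates both the algebraic mode $\tau^\beta$ and the decaying exponential $\tau^{-\beta/2-3/4}e^{-2\tau^{3/2}/3}$, any admixture of those two modes in a given ${}_1F_2$ is asymptotically negligible against its $e_1$ component, and the stated equivalences \eqref{eq:blarge1}--\eqref{eq:blarge3} follow. As an alternative that avoids redoing the saddle-point analysis, one could invoke \cite[\S16.11]{NIST} directly, specialise the parameters, and merely verify that the coefficient sequence $c_k(\beta)$ recovered there agrees with the one already used in Theorem \ref{th:4}.

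The main obstacle will be the saddle-point bookkeeping: ensuring that the right branch of the steepest-descent contour is used (so as to pick up only $e_1$ and not its oscillatory counterparts that dominate for $\tau\to -\infty$), and tracking with care the $2\pi$, the fractional powers $3^{\pm 1/3}$, and the Stirling-derived factors in the prefactor. Equivalently, one must justify that the Stokes line separating the $e_1$ and $e_2$ modes lies on the negative real axis, so that on the positive axis the coefficient of $e_2$ in each ${}_1F_2$ is either zero or masked by $e_1$. Once this bookkeeping is in order, the three identifications follow from a single templated computation applied with three parameter choices.
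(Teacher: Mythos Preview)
Your proposal is substantially more detailed than what the paper itself does. The paper gives no proof of Theorem~\ref{th:5} at all: the appendix simply states the three asymptotic relations, pointing to the general large-argument expansion of ${}_1F_2$ in \cite[\S16.11]{NIST} (the same reference already invoked for Theorem~\ref{th:4}) and noting that the $c_k$ obey the recurrence \cite[Eq.~16.11.4]{NIST}. In other words, the paper's ``proof'' is exactly the alternative route you mention in your last paragraph: quote the handbook expansion and specialise the parameters $(a;b_1,b_2)$ in each of the three cases.

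Your WKB plus Mellin--Barnes/steepest-descent outline is a legitimate and more self-contained way to arrive at the same conclusion, and the structural steps are sound: the eikonal equation $(S')^3=\tau S'$ correctly gives the three modes, the transport equation fixes the exponent $-\beta/2-3/4$, and the dominance of $e_1$ over the algebraic and decaying-exponential modes for $\tau\to+\infty$ is clear. One point to tidy: in your saddle contribution you wrote $e^{2w^{1/2}\cdot 3}$, but for ${}_1F_2$ the general formula gives $e^{2w^{1/2}}$, and with $w=\tau^3/9$ this is $e^{2\tau^{3/2}/3}$ as required; similarly the algebraic prefactor exponent should carry a division by $q+1-p=2$. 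These are bookkeeping slips of exactly the kind you flag yourself as the main obstacle, and they do not affect the logic. If you want to match the paper's level of argument, simply citing \cite[\S16.11]{NIST} and checking the three specialisations would suffice.
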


\bigskip

Applied to the $\Theta$ functions this means
\begin{equation}
  \Theta(\tau,\beta)\sim \frac{3^{1-\beta/3}}{\Gamma(-\beta)}e_1(\tau,\beta)
\sim
\frac{\sqrt{\pi}}{\Gamma(-\beta)} \exp((2/3) \tau^{3/2})(\tau)^{-\beta/2-3/4} 
 \label{eq:tlarge}
\end{equation}
as $\tau\rightarrow\infty$, where we have used $c_0=1$.

\section{Expressions for the frictional impact oscillator}
\label{A:2}

\begin{equation*}
  v(\phi,\psi,\dot{\phi},\dot{\psi})=\dot{\psi}+l\sin(\phi)\dot{\phi}
\end{equation*}
\begin{equation*}
  p(\phi,\psi)=\frac{m_1\cos(\phi)^2+m_2\sin(\phi)(\sin(\phi)-\mu\cos(\phi))}{m_1(m_2+m_1\cos(\phi)^2)}
\end{equation*}
\begin{equation*}
  b(\phi,\psi,\dot{\phi},\dot{\psi})=
  \frac{m_1l\cos(\phi)\left(m_2l\dot{\phi}^2-\cos(\phi)(k_\psi\psi+c_\psi\dot{\psi})\right)-m_2\sin(\phi)
    (k_\phi(\phi-\phi_0)+c_\phi\dot{\phi})}
  {m_1l(m_2+m_1\cos(\phi)^2)}-g
\end{equation*}
\begin{equation*}
  \alpha_1(\phi,\psi,\dot{\phi},\dot{\psi})=
  -\frac {m_2\left(\mu\left(\cos(\phi)^2\left(m_1+2m_2\right)-m_2\right)-2m_2\sin(\phi)\cos(\phi)\right)}{m_1 \left(m_2+m_1\cos(\phi)^2 \right)^2}\dot{\phi}
\end{equation*}
\begin{multline*}
  \alpha_2(\phi,\psi,\dot{\phi},\dot{\psi})=
  \frac{1}{m_1^2l^3\left( m_2+m_1\cos(\phi)^2\right)^2}
  \left[
    m_1^2\left(m_2+m_1\cos(\phi)^2\right){l}^{3}gc_{\psi}\cos(\phi)^2
    \right.\\
    -m_1^2\left( m_2+m_1\cos(\phi)^2\right){l}^{3}k_{\psi}\cos(\phi)^2\dot{\psi}
    +m_1^{3}{l}^{4}c_{\psi}\cos(\phi)^{3}{\dot{\phi}}^{2}
    -m_1^{2}{l}^{2}c_{\psi}\cos(\phi)^{2}\sin(\phi)\left(k_{\phi}\left(\phi-\phi_0\right)+c_{\phi}\dot{\phi}\right)\\
    +m_1^{2}{l}^{3}c_{\psi}\cos(\phi)^{2}\left(k_{\psi}\psi+c_{\psi}\dot{\psi}\right)
    -m_1m_2lc_{\phi}\sin(\phi)^{2}\left(k_{\psi}\psi+c_{\psi}\dot{\psi}\right)\\
    -m_1^{2}m_2{l}^{2}c_{\phi}\cos(\phi)\sin(\phi)^2\dot{\phi}^{2}
    -m_1^{2}m_2^{2}{l}^{4}\sin(\phi)\dot{\phi}^{3}
    +m_2\left(m_1+m_2\right)c_{\phi}\sin(\phi)\left(k_{\phi}\left(\phi-\phi_0\right)+c_{\phi}\dot{\phi}\right)\\
    +4m_1^{2}m_2{l}^{3}\cos(\phi)\sin(\phi)\dot{\phi}\left(k_{\psi}\psi+c_{\psi}\dot{\psi}\right)
    -3m_1m_2^{2}{l}^{2}\cos(\phi)\dot{\phi}\left(k_{\phi}\left(\phi-\phi_0\right)+c_{\phi}\dot{\phi}\right)\\
    +3m_1^{3}m_2{l}^{4}\cos(\phi)^{2}\sin(\phi)\dot{\phi}^{3}
    +m_1^{2}m_2{l}^{2}\cos(\phi)^{3}\dot{\phi}\left(k_{\phi}\left(\phi-\phi_0\right))+c_{\phi}\dot{\phi}\right)\\
    \left.
    -m_1m_2\left(m_2+m_1\cos(\phi)^2\right){l}^{2}k_{\phi}\sin(\phi)\dot{\phi}
    -4m_1^{2}m_2{l}^{2}\cos(\phi)\dot{\phi}\left(k_{\phi}\left(\phi-\phi_0\right)+c_{\phi}\dot{\phi}\right)
    \right]
\end{multline*}
\begin{multline*}
  \alpha_3(\phi,\psi,\dot{\phi},\dot{\psi})=
  \frac{1}{m_1^2l^2\left(m_2+m_1\cos(\phi)^2\right)^2}
  \left[
  m_1^2l^2c_\psi\cos(\phi)^3\left(\mu\sin(\phi)+\cos(\phi)\right)
  \right.\\
  +2m_1^2m_2l^2\mu\cos(\phi)^2\dot{\phi}
  +2m_1m_2^2l^2\cos(\phi)\left(\mu\cos(\phi)-\sin(\phi)\right)\dot{\phi}\\
  \left.
  -m_2(m_1+m_2)c_\phi \mu\cos(\phi)\sin(\phi)
  +m_2^2c_\phi\sin(\phi)^2
  \right]
\end{multline*}

\end{document}